\newtheorem{theorem}{Theorem}[section]
\newtheorem{lemma}[theorem]{Lemma}
\newtheorem{definition}[theorem]{Definition}
\newtheorem{prop}[theorem]{Proposition}
\newtheorem{remark}[theorem]{Remark}
\numberwithin{equation}{section}
\DeclareMathOperator*{\argmin}{argmin}
\DeclareMathOperator*{\argmax}{argmax}
\DeclareMathOperator*{\prox}{\mathbf{prox}}
\let\div\undefined
\DeclareMathOperator{\div}{div}
\newcommand{\vz}[1]{\ensuremath{\mathbb{#1}}}
\newcommand{\R}{{\vz R}}
\newcommand{\N}{{\vz N}}
\newcommand{\notinclude}[1]{}
\title{A non-convex variational model for joint polyenergetic CT reconstruction, sensor denoising and material decomposition\footnote{This work was supported by the European Union's Horizon 2020 research and innovation programme under grant agreement No.\ 777826, NoMADS, %European Commission within the Marie Sk\l{}odowska-Curie Research and Innovation Staff Exchange action (MSCA-RISE) NoMADS
as well as the Deutsche Forschungsgemeinschaft (DFG, German Research Foundation) under Germany's Excellence Strategy -- EXC 2044 --, Mathematics M\"unster: Dynamics -- Geometry -- Structure, and under the Collaborative Research Centre 1450--431460824, InSight, University of M\"unster.}}
\author{Georgios Papanikos \\ Department of Mathematics, University of Manchester\\ \\ Benedikt Wirth\\ Department of Mathematics, University of M\"unster}
\date{}
\begin{document}
\maketitle
% \tableofcontents

\begin{abstract}
Computed Tomography (CT) is widely used in engineering and medicine for imaging the interior of objects, patients, or animals.
If the employed X-ray source is monoenergetic, image reconstruction essentially means the inversion of a ray transform.
Typical X-ray sources are however polyenergetic (i.\,e.\ emit multiple wavelengths, each with different attenuation behaviour),
and ignoring this fact may lead to artefacts such as beam hardening.
An additional difficulty in some settings represents the occurrence of two different types of noise,
the photon counting effect on the detector and the electronic noise generated e.\,g.\ by CCD cameras.
We propose a novel variational image reconstruction model that takes both noise types and the polyenergetic source into account
and moreover decomposes the reconstruction into different materials based on their different attenuation behaviour.
In addition to a detailed mathematical analysis of the model we put forward a corresponding iterative algorithm including its convergence analysis.
Numerical reconstructions of phantom data illustrate the feasibility of the approach.
\\
\\
Key words: polyenergetic Computed Tomography, variational image reconstruction model, material decomposition, photon counting noise, electronic noise.  

% The polynergetic CT offers the opportunity for material decomposition reconstruction since different materials has different attenuations at different energy levels. CT scanners are usually equipped with a scintillator detector and other instruments like a charged coupled devise (CCD) camera. The photon counting effect on the detector at different energy levels and the electronic noise generated by the other instruments make the statistical modelling of the noise difficult. Classical methods such as Filtered back projection (FBP) fails to reconstruct noisy measurements faithfully and the images have high signal to noise ratio. Moreover materials with similar energy dependent attenuation coefficients are hard to distinct and to reconstruct as they can be written as a linear combination of the others. In this paper, we propose a novel non-convex variational model that can accurate simulate the noise model for the polyenergetic CT measurements and can effectively reconstruct and decompose different materials. We provide a detailed mathematical analysis of our model and then we propose two efficient numerical schemes for solving it. Our numerical results shown that our methods can be quite efficient of reducing the noise and  providing state the art material decomposition reconstructions.
\end{abstract}

\section{Introduction}\label{sec:intro}

Transmission X-ray Computed Tomography (CT) is a non-invasive imaging technique applied in many research areas such as engineering and medicine since it allows the visualization of internal structures of objects, humans, and animals \cite{markoe_2006,boerckel2014mct}. To this end X-rays are sent through the imaged object in different directions, and for each ray one records the remaining intensity behind the object. Typically one uses so-called polyenergetic or polychromatic X-ray sources that emit X-rays at different wave lengths. The corresponding image reconstruction can be formulated as a nonlinear inverse problem (see \cref{sec:ForPolyen} later). The advantage of using the nonlinear polyenergetic model instead of a simplified  monoenergetic (nonlinear or linear) model is that this prevents so-called beam hardening artefacts which otherwise will be present in the CT images as cupping and streaks.
%---------------------- need to add more methods -----------------------------
Taking the polyenergetic nature of the X-ray source into account is particularly appropriate if one aims to recognize and reconstruct different materials within an object since each material absorbs or attenuates X-ray photons at different wavelengths in a characteristic way. Many works consider the material decomposition problem. Some of them perform the decomposition in two stages: For instance, they first reconstruct a scalar image from the measured projections (the so-called sinogram) and then decompose this image into different materials \cite{Gao_2011,Yao_2019}. Other approaches first decompose the sinogram into separate sinograms for the different materials and only afterwards reconstruct the materials \cite{HTom18}. The splitting between reconstruction and material decomposition is not optimal since it ignores the statistical errors (noise) in the measurements. 
% ------------------------------------------------------------------
Noise in transmission tomography is one of the main challenges for image reconstructiion, especially in low-dose tomography. The photon counting effect on the detector causes Poisson noise. In addition electronic noise or readout noise generated by other instruments like a CCD camera \cite{Modgil_2015,nuyts2013} can also occur in the final measurements. In the monoenergetic CT these two mixed noise contributions can be statistically modelled via a mixed Poisson and Gaussian distribution \cite{ding2018,nuyts2013}. In polyenergetic CT the statistical description of the noise is more challenging since the amount of Poisson noise depends on the arriving X-ray intensity at each wavelength. Rather than a Poisson distribution this results in a compound Poisson distribution (see \cite{nuyts2013} and the references therein).
% ------------------------------------------------------------------
A Bayesian ansatz can be used to develop so-called statistical iterative reconstruction techniques for modelling the noise. These techniques also have the advantage that they can incorporate prior
knowledge about the reconstructed object and can be easily adapted for material decomposition. Even though they can be very effective, most of the works assume a simplified noise model for the measurements such as pure Gaussian or pure Poisson noise \cite{Hu_2019,Landi_2017,JZang18, Liu_2016,schirra2014spectral,pereli2021}.

In this paper we propose a novel variational model for reconstructing and decomposing materials from X-ray CT measurements which accounts for a polyenergetic X-ray source and for photon counting Poisson noise at the different wavelenghts as well as additional electronic Gaussian noise. The proposed model is given by the minimization of the functional
\begin{align}\label{eq:mixedrecdenmodel}
F(y,\boldsymbol w)
=\frac1{2\sigma^2}\!\!\int_\Sigma\!\left(\!f\!-\!\int_{E_{\min}}^{E_{\max}}\!\!\!\!y\,\mathrm d\mathcal{L}^1\right)^{\!\!2}\!\mathrm d\mathcal H^n
\!+\!\int_\Sigma\int_{E_{\min}}^{E_{\max}}\!\!\!\!d_{\mathrm{KL}}(y,\mathscr I(\boldsymbol w))\,\mathrm d\mathcal{L}^1\,\mathrm d\mathcal H^n
+R^{\alpha,\beta}(\boldsymbol w)
\end{align}
over the material density function $\boldsymbol{w}$ and the wavelength-dependent sinograms $y$ (see \cref{sec:statmotBm}).
In more detail, $\boldsymbol w(\boldsymbol x)\in\R^{N_{\text{mat}}}$ indicates the fractions of the $N_{\text{mat}}$ different materials at position $\boldsymbol x$,
while $y(E,\boldsymbol x,\boldsymbol\theta)$ describes the number of photons of wavelength (or photon energy) $E$ arriving at a position $(\boldsymbol x,\boldsymbol\theta)$ in the sinogram domain.
The measurement $f(\boldsymbol x,\boldsymbol\theta)$, in contrast, does not distinguish different wavelengths.
Furthermore, $\sigma\in\R$ denotes the standard deviation of the electronic Gaussian noise, $d_{\mathrm{KL}}$ denotes the so-called Kullback--Leibler divergence,
$\mathscr I(\boldsymbol w)$ denotes the forward operator (turning the material distribution $\boldsymbol w$ into the expected photon intensity at every wavelength and sinogram position).
The first two integrals act as data discrepancy terms, while $R^{\alpha,\beta}(\boldsymbol{w})$ represents a regularization functional with regularization weights $\alpha,\beta>0$
(we will use total variation and a multiwell term promoting pure materials). Our data fidelity can be viewed as an extension of the data fidelity terms proposed initially in  \cite{calatroni2017} and then adapted in \cite{Gpap2020} for monoenergetic CT reconstruction.
% Unlike many works here our regularization functional $Reg^{\alpha,\beta}$ incorporate a multiple well potential functional for improving the material separation instead of only the total variation regularization proposed in \cite{rudin1992ROF}.   

Our first main contribution is to provide a theoretical analysis of \eqref{eq:mixedrecdenmodel} proving the existence of minimisers using the direct methods of the calculus of variation. The second contribution is the design of a corresponding iterative scheme based on the works of \cite{brune2010,sawatzky2011} and \cite{Gpap2020} which concern image reconstruction from positron emission tomography (PET) measurements and monoenergetic transmission tomography measurements with Poisson and mixed Gaussian-Poisson noise, respectively. We then rigorously analyse our scheme and prove its convergence to a critical point of \eqref{eq:mixedrecdenmodel}. The proposed algorithm is efficient in the regime of medium to strong regularization, while for low regularization (in case of very low noise) it may require too many iterations, which is why we also provide two alternative primal-dual-based algorithms in the appendix. %The other two novel numerical approaches that we propose are a forward backward primal dual approach and a Alternating Directions method of multipliers. %These approaches are novel and new up to our knowledge since they are no previous works considering them in a non convex variational model for a no-linear inverse problem.

This paper is structured as follows. In \cref{sec:ForPolyen} we first present the forward operator and its mathematical properties, after which we derive our variational model via a Bayesian ansatz in \cref{sec:statmotBm} and prove its well-posedness in \cref{sec:Analysis}. In \cref{sec:FBSalg} we propose an iterative scheme and detail all its algorithmic ingredients, closing the section with a stability and convergence analysis. Finally, \cref{sec:Numresults} presents numerical results on phantom data to illustrate the efficacy of our model and algorithm for polyenergetic CT measurements.
 
We will denote $\R^n$-valued quantities or functions by boldfont letters.
The $n$-dimensional Lebesgue measure is denoted $\mathcal L^n$, the $n$-dimensional Hausdorff measure $\mathcal H^n$.
For $p\in[1,\infty]$ the corresponding Lebesgue space on an open bounded domain $\Omega$ is denoted $L^p(\Omega)$,
where sometimes we indicate a range $R$ by $L^p(\Omega;R)$.
The space of functions of bounded variation is denoted $\mathrm{BV}(\Omega)$ with corresponding total variation seminorm $\mathrm{TV}$.
Finally, the proximal operator of some function $f$ on a Hilbert space $X$ will be denoted by $\prox_{f}(x)=\argmin_{y\in X}\frac12\|x-y\|^2_X+f(y)$,
and its Legendre--Fenchel conjugate by $f^*(y)=\sup_{x\in X}(x,y)_X-f(y)$,
where $(\cdot,\cdot)_X$ and $\|\cdot\|_X$ denote the inner product and norm on $X$.

\notinclude{
\section{Mathematical Preliminaries}\label{sec:mathprelim}
In this section we will give some useful definitions, properties and theorems that will be useful for our analysis in the next sections. 
\begin{definition}[Lebesgue Spaces]\label{Lebesgue}
	Let $\Omega\subset \mathbb{R}^n$, and let $1\leq p \leq \infty$. Then we define 
	\begin{align*}
	L^p(\Omega,\mathbb{R}^d):=\bigg\{\boldsymbol{u}:\Omega \rightarrow\mathbb{R}^d,\ \boldsymbol{u}=(u^1,\cdots,u^d)  \ : \ \boldsymbol{u} \ \ & \text{Lebesque measurable and} \\&\ \ \Vert \boldsymbol{u} \Vert_{L^p(\Omega,\mathbb{R}^d)} <\infty\bigg\},
	\end{align*}
	where 
	\[\Vert \boldsymbol{u}\Vert_{L^{p}(\Omega,\mathbb{R}^d)}:=\begin{cases}
	\bigg(\int_{\Omega}\|\boldsymbol{u}\|_{\ell^2}^p\ \,\mathrm{d}\mathcal{L}^n\bigg)^{1/p}, & \text{for} \ 1\leq p <\infty, \\ \mathrm{ess}\sup\|\boldsymbol{u}\|_{\ell^2}, & \text{for} \ p=\infty, \end{cases}\] 
	with where $\|\cdot\|_{\ell^2}$ is the Euclidean norm of $\boldsymbol{u}$ and $\mathcal{L}^n$ the $n-$dimensional Lebesgue measure.    
\end{definition}

\begin{definition}[BV spaces] \label{def:BV space} Let $\Omega\subset \mathbb{R}^n$ bounded measurable set and $u\in L^1(\Omega)$ then we say that $u$ is a function of bounded variation in $\Omega$, if the distributional derivative of $u$ is  a finite Radon measure in $\Omega$  i.e.
	\[\int_{\Omega} u \div\phi \ \,\mathrm{d}\mathcal{L}^n=-\sum_{i=1}^n \int_{\Omega} \phi \ dD_i u , \ \ \forall \phi\in \mathcal{C}^{\infty}_0(\Omega) \]
	
	for some  $\mathbb{R}^{n}$-valued  radon measure $Du\in \mathcal{M}(\Omega,\mathbb{R}^{n})$. The space of bounded variation in $\Omega$ is denoted by $BV(\Omega)$.
\end{definition} 

\begin{definition}[Total Variation]\label{def:BV-seminorm} Let $u\in L^1(\Omega)$ the total variation of $u$ is defined by
	\[\mathrm{TV}(u):= \sup\bigg\{-\int_{\Omega} u \div \phi \ \,\mathrm{d}\mathcal{L}^n \ : \ \boldsymbol{\phi}\in \mathcal{C}^{\infty}_0(\Omega),\ |\phi|\leq 1 \bigg\}. \]
	
\end{definition} 

\begin{definition}[Convex conjugate]\label{def:Convcojugate}
	Let $\mathcal{Y}$ is a Banach space, and let $\mathcal{Y}^*$ be the dual space to $\mathcal{Y}$. For a convex functional $f: \mathcal{Y}\to \R\cup \{+\infty\}$ the convex conjugate  $f^*: \mathcal{Y}^*\to \R\cup\{+\infty\}$ is defined as follows
	\begin{equation}
	   f^*(z) : = \sup_{y} \{ \langle z, y \rangle - f(y) \}.
	\end{equation}
\end{definition}
Finally we have the introduce concepts of the convex analysis
\begin{definition}[Proximal Operator]\label{def: proximalop}
	Let $f$ be a convex proper and lower semicontinuous function and $\lambda>0$ the  proximal operator $\prox_{\lambda f}:\R^n\to \R^n$, $n\geq 2$ of the function $\lambda f$ is defined as follows
	\begin{equation*}
	\prox\hfil_{\lambda f}(\boldsymbol{x}):= \argmin_{\boldsymbol{y}\in\R^n}\bigg\{\frac{1}{2\lambda}\|\boldsymbol{x}-\boldsymbol{y}\|^2_2+f(\boldsymbol{y})\bigg\}. 
	\end{equation*}
\end{definition}

\begin{theorem}[Moreau identity]\label{thrm:moreau identity}Let $f$ be a convex proper and lower semicontinuous function then the following decomposition holds:
	\[
	\prox\hfil_{f}(\boldsymbol{x}) +\prox\hfil_{f^*}(\boldsymbol{x}) =\boldsymbol{x}.
	\]
\end{theorem}
}%\notinclude

\section{A variational model for multimaterial polyenergetic CT reconstruction}\label{sec:statmotBm}
This section presents, motivates and briefly analyses the variational model we propose.

\subsection{The polyenergetic CT forward operator}\label{sec:ForPolyen}
We briefly recapitulate the polyenergetic CT forward operator.
The imaged sample is supposed to lie in the bounded open domain $\Omega\subset\R^n$ ($n=2,3$).
In practical X-ray CT the pointlike X-ray source is moved around the sample along a smooth curve $\boldsymbol{\xi}\subset\R^n\setminus\overline\Omega$ of finite length and without self-intersections.
When the source is at position $\boldsymbol x\in\boldsymbol\xi$, the X-rays emanating from it in direction $\boldsymbol\theta\subset\mathbb{S}^{n-1}$ correspond to the half line
\begin{equation*}
L^{+}_{\boldsymbol{x},\boldsymbol{\theta}}:=\{\boldsymbol{x}+s\boldsymbol\theta: \ s\in [0,\infty)\}.
\end{equation*}
The X-ray intensity transmitted through $\Omega$ along each half line is recorded, thus we have a measurement for every point in $\boldsymbol\xi\times\mathbb S^{n-1}$,
where without loss of generality we may also reduce $\boldsymbol\xi\times\mathbb S^{n-1}$ to
\begin{equation*}
\Sigma=\{ (\boldsymbol{x},\boldsymbol{\theta})\in \boldsymbol\xi\times \mathbb{S}^{n-1} :  \ \mathcal{H}^1(L^{+}_{\boldsymbol{x},\boldsymbol{\theta}}\cap \Omega)>0\},
\end{equation*}
equipped with the $n$-dimensional Hausdorff measure $\mathcal H^n$.
The associated linear operator is the so-called divergent X-ray transform \cite{markoe_2006}, also known as fan- or cone-beam transform in two and three space dimensions, respectively.
\begin{definition}[Divergent X-ray transform]\label{def:divrgXray}
	The \emph{divergent X-ray transform} is the bounded linear operator $\mathscr{D}:L^q(\Omega)\to L^q(\Sigma)$ for $q\in[1,\infty]$ defined by
	\begin{equation}
	\mathscr{D}u(\boldsymbol{x},\boldsymbol{\theta}) = 
	\int_{L^{+}_{\boldsymbol{x},\boldsymbol{\theta}}} u(\boldsymbol{z})\, \mathrm{d}\mathcal{H}^{1}(\boldsymbol{z}).
	\end{equation}
\end{definition}
The following properties are straightforward to check.

% \begin{lemma}[Dual of divergent X-ray transform]\label{lem: dual-divrgX-ray}
% 	Let $\mathscr{D}:L^1(\Omega)\to L^1(\Sigma)$.  Then there exists a bounded linear dual operator $\mathscr{D}^*:L^{\infty}(\Sigma)\rightarrow L^{\infty}(\Omega)$ satisfying this
% 	\[\langle \mathscr{D}^*w,u \rangle_{L^{\infty}(\Omega),L^{1}(\Omega)}=\int_{\Omega}	u\mathscr{D}^*w\,\mathrm{d}\mathcal{L}^n(\boldsymbol{z}) =\int_{\Sigma_2}w\mathscr{D}u\ \,\mathrm{d}\mathcal{L}^n(\boldsymbol{\xi})\ \,\mathrm{d}\mathcal{L}^n(\boldsymbol{\theta}) = \langle w,\mathscr{D}u \rangle_{L^{\infty}(\Sigma),L^{1}(\Sigma)}\] 
% 	for all $u\in L^1(\Omega)$ and $w\in L^{\infty}(\Sigma)$ with $\langle \cdot,\cdot \rangle$ the duality product. The operator $\mathscr{D}^{*}$ is called  dual of the divergent X-ray transform given by 
% 	\begin{equation*}\label{eq:duadivXray}
% 	\mathscr{D}^*w(\boldsymbol{z})= \int_{\Xi}\|\boldsymbol{z}-\boldsymbol{\xi}\|^{1-n} w\left(\frac{\boldsymbol{z}-\boldsymbol{\xi}}{\|\boldsymbol{z}-\boldsymbol{\xi}\|},\boldsymbol{\xi}\right) \,\mathrm{d}\mathcal{L}^n(\boldsymbol{\xi}) \ \ \text{for a.e.} \ \ \boldsymbol{z}\in\Omega.
% 	\end{equation*} 
% \end{lemma}

\begin{lemma}[Properties of $\mathscr{D}$]\label{assm: PropofDD*}
	\mbox{}
	\begin{enumerate}
	\item The dual operator $\mathscr{D}^*:L^{q'}(\Sigma)\rightarrow L^{q'}(\Omega)$ to $\mathscr{D}$ with $q'=\frac q{q-1}$ is given by
	\begin{equation*}\label{eq:duadivXray}
	\mathscr{D}^*w(\boldsymbol{z})= \int_{\boldsymbol\xi}|\boldsymbol{z}-\boldsymbol{x}|^{1-n} w\left(\boldsymbol{x},\frac{\boldsymbol{z}-\boldsymbol{x}}{|\boldsymbol{z}-\boldsymbol{x}|}\right) \,\mathrm d\mathcal{H}^1(\boldsymbol{x}).% \ \ \text{for a.e.} \ \ \boldsymbol{z}\in\Omega.
	\end{equation*} 
		%\item[(A1)] \label{item:A1 L1continuityofD} The operator $\mathscr{D}:L^1(\Omega)\to L^1(\Sigma)$ is linear and sequential continuous with respect to strong $L^1$ topology. 
		\item \label{item:A2 positivityofD} The operator $\mathscr{D}$ preserves positivity, i.e., $\mathscr{D}u\geq 0$ for any $u\geq 0$.% and $\mathscr{D}u=0$ a.e on $\Sigma$ if and only if $u=0$ a.e. on $\Omega$.
		\item The divergent beam transform $\mathscr D$ is also bounded as an operator from $L^\infty(\Omega)$ to $L^\infty(\Sigma)$ with norm no larger than the diameter of $\Omega$.
		\item The operator $\mathscr{D}$ does not annihilate constant functions, i.e., $\mathscr{D}\boldsymbol{1}_{\Omega}\neq 0$.
	\end{enumerate}
\end{lemma}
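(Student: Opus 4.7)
The plan is to verify the four items in order. Only item 1 requires a genuine computation; items 2--4 fall out directly from the definition of $\mathscr{D}$.

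For item 1, I would compute the duality pairing $\int_\Sigma(\mathscr{D}u)(\boldsymbol{x},\boldsymbol{\theta})\,w(\boldsymbol{x},\boldsymbol{\theta})\,\mathrm d\mathcal H^n$ directly. Substituting the definition of $\mathscr{D}$ and parameterising each half-line by arc length $s$ turns this pairing into an iterated integral over $\boldsymbol\xi\times\mathbb S^{n-1}\times[0,\infty)$, where I use that on the product of the curve $\boldsymbol\xi$ and the sphere $\mathbb S^{n-1}$ the measure $\mathcal H^n$ factors as $\mathrm d\mathcal H^{n-1}(\boldsymbol\theta)\,\mathrm d\mathcal H^1(\boldsymbol x)$. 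Since $\boldsymbol\xi\cap\overline\Omega=\emptyset$, for each fixed $\boldsymbol x\in\boldsymbol\xi$ the map $(s,\boldsymbol\theta)\mapsto\boldsymbol z:=\boldsymbol x+s\boldsymbol\theta$ is a smooth bijection from $(0,\infty)\times\mathbb S^{n-1}$ onto $\R^n\setminus\{\boldsymbol x\}$ with polar Jacobian $s^{n-1}=|\boldsymbol z-\boldsymbol x|^{n-1}$. Applying this change of variables on the inner integral, inverting the Jacobian, and then swapping the order of integration via Fubini yields the claimed kernel representation of $\mathscr{D}^*$; its $L^{q'}$-continuity is inherited from the $L^q$-continuity of $\mathscr{D}$ by duality.

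Items 2--4 are then essentially immediate. Item 2 follows because $u\ge0$ makes the defining integrand in $\mathscr{D}u$ non-negative. Item 3 follows from the pointwise estimate
\begin{equation*}
|\mathscr{D}u(\boldsymbol x,\boldsymbol\theta)|\le\|u\|_{L^\infty(\Omega)}\,\mathcal H^1(L^+_{\boldsymbol x,\boldsymbol\theta}\cap\Omega)\le\|u\|_{L^\infty(\Omega)}\operatorname{diam}(\Omega),
\end{equation*}
since any chord of $\Omega$ has length at most $\operatorname{diam}(\Omega)$. Item 4 holds because $\mathscr{D}\boldsymbol 1_\Omega(\boldsymbol x,\boldsymbol\theta)=\mathcal H^1(L^+_{\boldsymbol x,\boldsymbol\theta}\cap\Omega)$, and by the very definition of $\Sigma$ this quantity is strictly positive at every $(\boldsymbol x,\boldsymbol\theta)\in\Sigma$, so $\mathscr{D}\boldsymbol 1_\Omega$ is even strictly positive on all of $\Sigma$.

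The only real obstacle is making the change of variables in item 1 rigorous. The assumption that $\boldsymbol\xi$ is a smooth curve of finite length without self-intersections, together with $\boldsymbol\xi\subset\R^n\setminus\overline\Omega$, is precisely what permits writing $\mathcal H^n$ on $\boldsymbol\xi\times\mathbb S^{n-1}$ as a product measure and simultaneously rules out the singularity $\boldsymbol z=\boldsymbol x$ of the kernel $|\boldsymbol z-\boldsymbol x|^{1-n}$, so that the integrand remains bounded for $\boldsymbol z\in\Omega$. Once these measure-theoretic points are pinned down, all remaining steps are routine applications of Fubini's theorem and elementary pointwise bounds.
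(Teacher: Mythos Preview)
Your proposal is correct. The paper itself omits the proof entirely, merely prefacing the lemma with ``The following properties are straightforward to check,'' so there is nothing to compare against; your argument supplies exactly the standard verification one would expect, and the polar change of variables you describe for item~1 is the canonical way to derive the dual formula.
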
   

When passing through a material, an X-ray is attenuated.
The amount of attenuation at each point is quantified by the (spatially varying, material-dependent) attenuation coefficient $u:\Omega\to[0,\infty)$.
By Beer's law the energy fraction passing through the material without being absorbed (the so-called transmittance) equals $\exp(-U)$
for $U=\int_{L^+_{\boldsymbol{x},\boldsymbol{\theta}}}u(x)\,\mathrm d\mathcal H^1$ the accumulated attenuation coefficient along the ray.
However, this formula only holds for monochromatic or monoenergetic X-rays, since the attenuation coefficient of a material also depends on the X-ray wavelength.
Practical X-ray sources are polyenergetic, i.e., they emit a continuous spectrum of wave lengths, described by an integrable function
\begin{equation*}
I_0:[E_{\min},E_{\max}]\to[0,\infty)
\end{equation*}
that indicates the intensity or amount of X-rays emitted at each wavelength
(rather than the wavelength, the argument of $I_0$ is the energy of a photon, which is in one-to-one correspondence with the photon's wavelength
and lies between a minimum and a maximum possible energy $E_{\min}$ and $E_{\max}$).
Thus, if $u:\Omega\times[E_{\min},E_{\max}]\to[0,\infty)$ describes the attenuation coefficient as a function of spatial position and photon energy,
the expected number of photons or X-ray intensity passing through the material is expressed by the following polyenergetic CT forward operator.

\begin{definition}[Polyenergetic CT forward operator]\label{def: polyen Intensity operator}
	The \emph{expected X-ray intensity} resulting from a Lebesgue measurable $u:\Omega\times[E_{\min},E_{\max}]\to[0,\infty)$ is defined for $\mathcal L^1\otimes\mathcal H^n$-almost all $(E,\boldsymbol x,\boldsymbol\theta)\in[E_{\min},E_{\max}]\times\Sigma$ via
	\begin{equation*}
	\widetilde{\mathscr{I}}(u)(E,\boldsymbol x,\boldsymbol\theta) =I_0(E) \exp\left(-\mathscr{D}u(\boldsymbol x,\boldsymbol\theta,E) \right),
	\end{equation*}
	where for each $E>0$ we write $\mathscr{D}u(\boldsymbol x,\boldsymbol\theta,E)$ for $\mathscr D(u(\cdot,E))$ evaluated at $(\boldsymbol x,\boldsymbol\theta)$.
	The \emph{polyenergetic CT forward operator} then is
	\begin{equation*}\label{eq:Cpolyen}
	\widetilde{\mathscr{F}}(u)(\boldsymbol x,\boldsymbol\theta)
	=\int_{E_{\min}}^{E_{\max}}\widetilde{\mathscr{I}}(u)(E,\boldsymbol x,\boldsymbol\theta)\,\mathrm{d}\mathcal{L}^1(E)
	=\int_{E_{\min}}^{E_{\max}} I_0(E) \exp\left(-\mathscr{D}u(\boldsymbol x,\boldsymbol\theta,E) \right)\,\mathrm{d}\mathcal{L}^1(E).
	\end{equation*}
\end{definition}

Since in the above forward operator the transmittance is accumulated over all photon energies
(unlike in spectral CT, where photons at different wavelengths are distinguished in the measurement),
it is impossible to reconstruct the energy dependence of the attenuation coefficient $u$.
Therefore, the use of the polyenergetic CT forward operator only makes sense in combination with further prior knowledge.
We will assume that the scanned object consists of $N_{\text{mat}}$ different materials with known, bounded photon energy dependent attenuation coefficients
\begin{equation*}
g_i:[E_{\min},E_{\max}]\to[0,\infty),\quad i=1,\ldots,N_{\text{mat}}
\end{equation*}
as they can for instance be found in \url{www.nist.gov/pml/
	data/xraycoef/}.
We will write $\boldsymbol g=(g_1,\ldots,g_{N_{\text{mat}}})$.
The attenuation coefficient of the sample can then be expressed as
\begin{equation}\label{eq:matdec}
u(\boldsymbol{z},E)=\sum_{i=1}^{N_{\text{mat}}}g_i(E)w_i({\boldsymbol{z}})% =\boldsymbol{g}(E)\cdot\boldsymbol{w}
\end{equation}
with $w_i:\Omega\to[0,1]$ the spatial distribution of the $i$th material, indicating at each point its volume proportion.
Including void as one of the materials, all volume proportions have to add up to one,
\begin{equation*}
\sum_{i=1}^{N_{\text{mat}}}w_i=1.
\end{equation*}
Summarizing, the expected intensity of X-rays passing through the sample can be expressed in terms of the unknown material distributions $\boldsymbol w=(w_1,\ldots,w_{N_{\text{mat}}})$ as
\begin{equation*}
\mathscr{I}(\boldsymbol w)(E,\boldsymbol x,\boldsymbol\theta)
=\widetilde{\mathscr{I}}(u)(E,\boldsymbol x,\boldsymbol\theta)
=I_0(E) \exp\left(-\sum_{i=1}^{N_{\text{mat}}}g_i(E)\mathscr{D}w_i(\boldsymbol x,\boldsymbol\theta)\right),
\end{equation*}
and we aim to reconstruct $\boldsymbol w$ from a measurement of
\begin{equation}\label{eqn:polyenForwardOp}
\mathscr{F}(\boldsymbol w)
=\widetilde{\mathscr{F}}(u)
%=\mathscr{C}\left((\boldsymbol{z},E)\mapsto\sum_{i=1}^{N_{\text{mat}}}g_i(E)w_i({\boldsymbol{z}})\right)
=\int_{E_{\min}}^{E_{\max}} I_0(E) \exp\left(-\sum_{i=1}^{N_{\text{mat}}}g_i(E)\mathscr{D}w_i\right)\,\mathrm{d}\mathcal{L}^1(E).
\end{equation}

% In practical CT situation we assume that the scanned object consists of finite many different materials with unknown spatial density distributions $w_i$, $i=1,\ldots,N_{\text{mat}}$ and $N_{\text{mat}}$ denotes the number of materials. For each spatial densities $w_i$ we have that  $0\leq w_i\leq 1$  a.e. on $\Omega$ and  also $\sum_{i=1}^{N_{\text{mat}}}w_i=1$.  These materials could be for instance high-Z contrast materials or soft tissue. The attenuation function can be expressed as 
% \begin{equation}\label{eq:matdec}
% \mu(\boldsymbol{z},E)=\sum_{i=1}^{N_{\text{mat}}}g_i(E)w_i({\boldsymbol{z}}) =\boldsymbol{g}(E)\cdot\boldsymbol{w}
% \end{equation}
% Where $g_i(E)$ is the energy dependent mass attenuation coefficient of the material $i$. These coefficients are usually known and their values can be found tabulated by for instance, NIST[][]. Therefore the forward operator can be rewritten as follows 
% 
% \begin{align}\label{eqn:polyenForwardOp}
% \mathscr{C}(\mu)=\mathscr{C}(\boldsymbol{g}(E)\cdot\mathscr{D}\boldsymbol{w})=\int_{E_{\min}}^{E_{\max}}I_0(E)\exp\left(-\boldsymbol{g}(E)\cdot\boldsymbol{\mathscr{D}w}\right)\ \,\mathrm{d}\mathcal{L}^1(E) =\int_{E_{\min}}^{E_{\max}}\mathscr{K}(\boldsymbol{g}(E)\cdot\boldsymbol{\mathscr{D}w})\,\mathrm{d}\mathcal{L}^1(E)  
% \end{align}
% 
% with
% \\
% \begin{align}
% \boldsymbol{g}(E)=\begin{pmatrix}g_{1}(E),g_{2}(E)\dots ,g_{N_{\text{mat}}}(E)	\end{pmatrix}\ \ \text{and}  \ \ \mathscr{D}\boldsymbol{w}&= \begin{pmatrix}
% \mathscr{D}w_1 \\
% \mathscr{D}w_2 \\
% \vdots \\
% \mathscr{D}w_{N_{\text{mat}}}
% \end{pmatrix}
% \end{align}

\begin{lemma}[$L^1$-continuity of $\mathscr I$ and $\mathscr F$]\label{lem:L1Continuity of Cpoly}
The operator $\mathscr I$ is continuous from $L^1(\Omega;[0,1]^{N_{\text{mat}}})$ to $L^1((E_{\min},E_{\max})\times\Sigma)$,
the operator $\mathscr F$ is continuous from $L^1(\Omega;[0,1]^{N_{\text{mat}}})$ to $L^1(\Sigma)$.
%	Let $\boldsymbol{w}\in L^1(\Omega,\R^{N_{\text{mat}}})$ and $\{\boldsymbol{w}_n\}\subset L^1(\Omega,\R^{N_{\text{mat}}}) $ with $\boldsymbol{w},\boldsymbol{w}_n\geq 0$ a.e. on $\Omega$ and let $\boldsymbol{w}_n$ converges to $\boldsymbol{w}$ in $L^1(\Omega,\R^{N_{\text{mat}}})$ then $\mathscr{C}(g(E)\cdot \mathscr{D}\boldsymbol{w}_n)\in L^1(\Sigma)$  converges to $\mathscr{C}(g(E)\cdot \mathscr{D}\boldsymbol{w})$ in $L^1(\Sigma)$.
\end{lemma}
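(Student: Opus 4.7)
The plan is to exploit the fact that on $[0,1]^{N_{\text{mat}}}$–valued arguments the exponential in $\mathscr{I}$ is evaluated only at non-positive values (by the positivity-preserving property \ref{item:A2 positivityofD} of $\mathscr D$), where $t\mapsto e^{-t}$ is $1$-Lipschitz. Continuity will then reduce to the already available $L^1$-continuity of the divergent X-ray transform $\mathscr D:L^1(\Omega)\to L^1(\Sigma)$ (which holds by \cref{def:divrgXray}, stated for $q\in[1,\infty]$).

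Concretely, let $\boldsymbol w^k\to\boldsymbol w$ in $L^1(\Omega;[0,1]^{N_{\text{mat}}})$. First I would observe that, by linearity and boundedness of $\mathscr D$, one has $\mathscr Dw_i^k\to\mathscr Dw_i$ in $L^1(\Sigma)$ for each $i$, and both limits are non-negative by \cref{assm: PropofDD*}\ref{item:A2 positivityofD}. Since both $\sum_i g_i(E)\mathscr Dw_i^k$ and $\sum_i g_i(E)\mathscr Dw_i$ are non-negative $\mathcal L^1\otimes\mathcal H^n$-a.e.\ on $(E_{\min},E_{\max})\times\Sigma$, the Lipschitz estimate $|e^{-a}-e^{-b}|\le|a-b|$ for $a,b\ge0$ gives pointwise
\begin{equation*}
\bigl|\mathscr I(\boldsymbol w^k)-\mathscr I(\boldsymbol w)\bigr|(E,\boldsymbol x,\boldsymbol\theta)
\le I_0(E)\sum_{i=1}^{N_{\text{mat}}}g_i(E)\,\bigl|\mathscr Dw_i^k-\mathscr Dw_i\bigr|(\boldsymbol x,\boldsymbol\theta).
\end{equation*}

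Integrating via Fubini against $\mathcal L^1\otimes\mathcal H^n$, and using that each $g_i$ is bounded while $I_0\in L^1(E_{\min},E_{\max})$, yields
\begin{equation*}
\bigl\|\mathscr I(\boldsymbol w^k)-\mathscr I(\boldsymbol w)\bigr\|_{L^1((E_{\min},E_{\max})\times\Sigma)}
\le\sum_{i=1}^{N_{\text{mat}}}\Bigl(\int_{E_{\min}}^{E_{\max}}\!\!I_0(E)g_i(E)\,\mathrm d\mathcal L^1\Bigr)\bigl\|\mathscr Dw_i^k-\mathscr Dw_i\bigr\|_{L^1(\Sigma)},
\end{equation*}
which tends to zero and proves the first continuity claim. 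The second claim follows because $|\mathscr F(\boldsymbol w^k)-\mathscr F(\boldsymbol w)|\le\int_{E_{\min}}^{E_{\max}}|\mathscr I(\boldsymbol w^k)-\mathscr I(\boldsymbol w)|\,\mathrm d\mathcal L^1$, so by Fubini $\|\mathscr F(\boldsymbol w^k)-\mathscr F(\boldsymbol w)\|_{L^1(\Sigma)}$ is dominated by the same bound.

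There is no serious obstacle here; the only subtle point is ensuring the exponential is evaluated in the regime where it is $1$-Lipschitz, which is guaranteed exactly by the hypothesis $\boldsymbol w\in[0,1]^{N_{\text{mat}}}$ together with the positivity of $\mathscr D$. If one wanted to avoid using the Lipschitz estimate, an alternative route would be to extract an a.e.-convergent subsequence of $\mathscr Dw_i^k$ and apply dominated convergence with dominant $I_0(E)$, then argue by the Urysohn subsequence principle to recover convergence of the full sequence; but the direct Lipschitz bound above is cleaner.
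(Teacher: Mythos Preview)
Your proof is correct and follows essentially the same approach as the paper: both exploit that $\exp$ is $1$-Lipschitz on $(-\infty,0]$ to reduce the estimate to the $L^1$-continuity of $\mathscr D$, and then deduce the result for $\mathscr F$ by a triangle inequality under the energy integral. The only cosmetic difference is that the paper also records the trivial bound $|\mathscr I(\boldsymbol w)|\le I_0(E)$ to confirm the operators land in the stated target spaces, which you leave implicit.
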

\begin{proof}
Since $w_i,g_i\geq0$ for all $i$ we have $\exp\left(-\sum_{i=1}^{N_{\text{mat}}}g_i(E)\mathscr{D}w_i\right)\in[0,1]$ and thus
\begin{equation*}
|\mathscr I(\boldsymbol w)(E,\boldsymbol x,\boldsymbol\theta)|\leq I_0(E)
\qquad\text{and}\qquad
|\mathscr F(\boldsymbol w)|
\leq\int_{E_{\min}}^{E_{\max}} I_0(E)\,\mathrm{d}\mathcal{L}^1(E)
\end{equation*}
so that $\mathscr I$ indeed maps into $L^1((E_{\min},E_{\max})\times\Sigma)$ and $\mathscr F$ even maps into $L^\infty(\Sigma)$.
Furthermore, due to the Lipschitz continuity of $\exp$ on $(-\infty,0]$ with Lipschitz constant $1$ we have
\begin{multline*}
|\mathscr I(\boldsymbol w)-\mathscr I(\tilde{\boldsymbol w})|
\leq I_0(E) \left|\exp\left(-\sum_{i=1}^{N_{\text{mat}}}g_i(E)\mathscr{D}w_i\right)-\exp\left(-\sum_{i=1}^{N_{\text{mat}}}g_i(E)\mathscr{D}\tilde w_i\right)\right|\\
\leq I_0(E) \sum_{i=1}^{N_{\text{mat}}}g_i(E)|\mathscr{D}w_i-\mathscr{D}\tilde w_i|
\leq\max_i\left(I_0(E)g_i(E)\right)\sum_{i=1}^{N_{\text{mat}}}|\mathscr{D}w_i-\mathscr{D}\tilde w_i|.
\end{multline*}
Integrating over $(E_{\min},E_{\max})\times\Sigma$, the continuity statement for $\mathscr I$ then follows from the $L^1$-continuity of $\mathscr D$.
The $L^1$-continuity of $\mathscr F$ now is a direct consequence due to
$\|\mathscr F(\boldsymbol w)-\mathscr F(\tilde{\boldsymbol w})\|_{L^1}
=\big\|\int_{E_{\min}}^{E_{\max}}\mathscr I(\boldsymbol w)-\mathscr I(\tilde{\boldsymbol w})\,\mathrm d\mathcal L^1\big\|_{L^1}
\leq\|\mathscr I(\boldsymbol w)-\mathscr I(\tilde{\boldsymbol w})\|_{L^1}$.
\end{proof}

\subsection{Bayesian motivation of the variational reconstruction}\label{sec:statmotBm}
We now briefly motivate our variational approach for the CT reconstruction from a Bayesian perspective, following \cite{calatroni2017,Gpap2020,lanza2014}.
The quantity $\mathscr{F}(\boldsymbol w)(\boldsymbol x,\boldsymbol\theta)$ from \eqref{eqn:polyenForwardOp}
in fact only represents the expected intensity of X-rays passing through the sample along the ray defined by $(\boldsymbol x,\boldsymbol\theta)$.
The actual number of arriving photons, though, which is the measured quantity, is governed by a Poisson process with that intensity.
In addition, the photons are typically counted with charged coupled devices (CCD) that exhibit Gaussian background noise\cite{Modgil_2015,markoe_2006}.
We aim to account for both types of noise.

Let the domain $\Sigma$ of the measurement be partitioned into detectors $D_i$
(in fact, in a real CT system the physical detectors are moved along with the X-ray source,
so each $D_i$ corresponds to a true physical detector, representing a subset of $\mathbb S^{n-1}$,
times a segment of the curve $\boldsymbol\xi$ corresponding to a measurement time interval during the motion).
Similarly, let the energy interval $[E_{\min},E_{\max}]$ be partitioned into subintervals $E_i$.
The expected number of photons from energy range $E_i$ arriving at detector $D_j$ is then given by
\begin{equation*}
\bar y_{ij}(\boldsymbol w)
=\int_{E_i}\int_{D_j}\mathscr{I}(\boldsymbol w)(E,\boldsymbol x,\boldsymbol\theta)\,\mathrm{d}\mathcal{H}^n(\boldsymbol x,\boldsymbol\theta)\,\mathrm{d}\mathcal{L}^1(E).
\end{equation*}
The actual number of photons $y_{ij}$ then is a Poisson-distributed random variable with mean $\bar y_{ij}(\boldsymbol w)$,
\begin{equation*}
y_{ij}\sim\mathcal P(\bar y_{ij}(\boldsymbol w)),
\end{equation*}
and the actual measurement, the readout $f_j$ of detector $D_j$, will be the total number of photons, summed over all energy ranges and perturbed by normally distributed noise, thus
\begin{equation*}\textstyle
f_{j}\sim\mathcal N\left(\sum_i y_{ij},\mathcal H^n(D_j)\sigma^2\right),
\end{equation*}
where $\sigma^2$ denotes the (usually known) noise variance per detector area.
Note that since the noise of all CCDs within a detector $D_j$ is independent, the noise variance of the detector is the sum of the noise variances of each CCD by the Bienaym\'e formula,
and since the number of CCDs is proportional to the detector area, so must be the noise variance.

The quantities $f=(f_j)_j$ and $y=(y_{ij})_{ij}$ are stochastic variables,
whose distribution depends on $\bar y(\boldsymbol w)=(\bar y_{ij}(\boldsymbol w))_{ij}$ and thus on $\boldsymbol w$.
Denoting by $\pi(y,\bar y(\boldsymbol w)|f)$ the probability density of true photon numbers $y$ and expected intensity $\bar y(\boldsymbol w)$ given the measurement $f$,
the maximum a posteriori (MAP) estimate tries to identify $y$ and $\bar y(\boldsymbol w)$ or rather $\boldsymbol w$ as the configuration with highest probability,
\begin{equation*}
(y,\boldsymbol w)=\argmax\pi(y,\bar y(\boldsymbol w)|f)=\argmin-\log\pi(y,\bar y(\boldsymbol w)|f).
\end{equation*}
By Bayes' rule,
\begin{equation*}
\pi(y,\bar y(\boldsymbol w)|f)
=\frac{\pi(y,\bar y(\boldsymbol w),f)}{\pi(f)}
=\frac{\pi(y,\bar y(\boldsymbol w),f)}{\pi(y,\bar y(\boldsymbol w))}\frac{\pi(y,\bar y(\boldsymbol w))}{\pi(\bar y(\boldsymbol w))}\frac{\pi(\bar y(\boldsymbol w))}{\pi(f)}
=\frac{\pi(f|y,\bar y(\boldsymbol w))\pi(y|\bar y(\boldsymbol w))\pi(\bar y(\boldsymbol w))}{\pi(f)}.
\end{equation*}
Due to the above assumptions on the distributions we can explicitly express two of the probability densities,
\begin{align*}
\pi(f|y,\bar y(\boldsymbol w))
&=\prod_j\pi(f_j|(y_{ij})_i)
=\prod_j\frac{1}{\sqrt{2\pi\mathcal H^n(D_j)}\sigma}\exp\left(-\frac{1}{2\mathcal H^n(D_j)\sigma^2}\left(f_j-\sum_i y_{ij}\right)^2\right),\\
\pi(y|\bar y(\boldsymbol w))
&=\prod_{i,j}\pi(y_{ij}|\bar y_{ij}(\boldsymbol w))
=\prod_{i,j}\frac{\bar y_{ij}(\boldsymbol w)^{y_{ij}}e^{-\bar y_{ij}(\boldsymbol w)}}{y_{ij}!}.
\end{align*}
Furthermore, as is frequently done, for the prior density $\pi(\bar y(\boldsymbol{w}))$ we will assume a Gibbs distribution of the form
\begin{equation*}
\pi(\bar y(\boldsymbol{w}))
=\pi(\boldsymbol{w})
=\frac1Z\exp(-R^{\alpha,\beta}(\boldsymbol{w}))
\end{equation*}
with $Z$ a normalization constant and $R^{\alpha,\beta}$ a regularization functional depending on nonnegative tuning weights $\alpha,\beta\geq0$.
Material distributions with lower values of $R^{\alpha,\beta}$ are thus assigned a higher probability.
Abbreviating the probability simplex as
\begin{equation*}
\Delta=\left\{\boldsymbol w\in\R^{N_{\text{mat}}}\,\middle|\,w_1,\ldots,w_{N_{\text{mat}}}\geq0,\,\sum_{i=1}^{N_{\text{mat}}}w_i=1\right\},
\end{equation*}
we choose
\begin{equation*}
R^{\alpha,\beta}(\boldsymbol w)
=\alpha R_1(\boldsymbol w)+\beta R_2(\boldsymbol w)
\quad\text{with }
\begin{cases}
R_1(\boldsymbol w)=\sum_{i=1}^{N_{\text{mat}}}\mathrm{TV}(w_i)+\int_\Omega\iota_\Delta(\boldsymbol w)\,\mathrm{d}\mathcal{L}^n
\text{ and }\\
R_2(\boldsymbol w)=-\frac12\int_\Omega|\boldsymbol w-\mathbf1/N_{\text{mat}}|^2\,\mathrm{d}\mathcal{L}^n.
\end{cases}
\end{equation*}
% \begin{equation*}
% R^{\alpha,\beta}(\boldsymbol w)
% =\begin{cases}
% \alpha\sum_{i=1}^{N_{\text{mat}}}\mathrm{TV}(w_i)-\beta\int_\Omega|N_{\text{mat}}\boldsymbol w-\mathbf1|^2\,\mathrm{d}\mathcal{L}^n
% &\text{if }w_i\geq0\text{ and }\sum_{i=1}^{N_{\text{mat}}}w_i=1\text{ $\mathcal L^n$-a.e.,}\\
% \infty&\text{else,}
% \end{cases}
% \end{equation*}
Above, $\mathbf1$ represents the vector of ones, $\iota_\Delta$ is the convex indicator function of the probability simplex, and $\mathrm{TV}$ is the total variation seminorm.
The total variation term in $R_1$ is known to prefer spatially piecewise constant material densities, while the quadratic term $R_2$ is minimal at pure materials and thus prefers these.

Summarizing, if in $-\log\pi(y,\bar y(\boldsymbol w)|f)$ we ignore terms independent of $(y,\boldsymbol w)$
(which we will generally do in the following as they have no influence on the minimizer),
the MAP estimate is obtained as minimizer $(y,\boldsymbol w)$ of
\begin{equation*}
\sum_j\frac{1}{2\mathcal H^n(D_j)\sigma^2}\left(f_j-\sum_i y_{ij}\right)^2
+\sum_{i,j}\left(\log y_{ij}!+\bar y_{ij}(\boldsymbol w)-y_{ij}\log\bar y_{ij}(\boldsymbol w)\right)
+R^{\alpha,\beta}(\boldsymbol w).
\end{equation*}
With the Stirling approximation $\log(n!)= n\log(n)-n+O(\log(n))$ as $n\to \infty$ and the Kullback--Leibler divergence
\begin{equation*}
d_{\mathrm{KL}}(z,\bar z)
=\begin{cases}
z\log(\tfrac z{\bar z})-z+\bar z&\text{if }z,\bar z\geq0\\
\infty&\text{else}
\end{cases}
\end{equation*}
(using the formal convention $\frac10=\infty$, $\frac00=1$, and $0\log0=0$) this becomes
\begin{equation*}
\sum_j\frac{1}{2\mathcal H^n(D_j)\sigma^2}\left(f_j-\sum_i y_{ij}\right)^2
+\sum_{i,j}d_{\mathrm{KL}}(y_{ij},\bar y_{ij}(\boldsymbol w))
+R^{\alpha,\beta}(\boldsymbol w).
\end{equation*}
Now identify the vectors $f$ and $y$ with piecewise constant functions $f:\Sigma\to\R$ and $y:[E_{\min},E_{\max}]\times\Sigma\to\R$
(with a slight misuse of notation we use the same symbol) such that
\begin{equation*}
f=\frac{f_j}{\mathcal{H}^n(D_j)}\text{ on detector $D_j$, }\qquad
y=\frac{y_{ij}}{\mathcal{L}^1(E_i)\mathcal{H}^n(D_j)}\text{ on $E_i\times D_j$}
\end{equation*}
(thus $f_j$ and $y_{ij}$ are the accumulated values of $f$ on $D_j$ and $y$ on $E_i\times D_j$).
Furthermore note that under sufficient regularity of $\boldsymbol w$ (for instance $\boldsymbol w$ with strictly convex level lines)
the intensity $\mathscr{I}(\boldsymbol w)$ is continuous so that
$\mathscr{I}(\boldsymbol w)\approx\frac{\bar y_{ij}(\boldsymbol w)}{\mathcal{L}^1(E_i)\mathcal{H}^n(D_j)}$ on $E_i\times D_j$.
With this approximation the above energy now reads
\begin{equation*}
F(y,\boldsymbol w)
=\frac1{2\sigma^2}\int_\Sigma\left(f-\int_{E_{\min}}^{E_{\max}}y\,\mathrm d\mathcal{L}^1\right)^2\,\mathrm d\mathcal H^n
+\int_\Sigma\int_{E_{\min}}^{E_{\max}}d_{\mathrm{KL}}(y,\mathscr I(\boldsymbol w))\,\mathrm d\mathcal{L}^1\,\mathrm d\mathcal H^n
+R^{\alpha,\beta}(\boldsymbol w).
\end{equation*}
Therefore, we will define our CT-reconstruction from the measurement $f$ as a minimizer of this energy $F$
(where we no longer require $y$ to be piecewise constant).
We remark that the last transition from discrete detectors to the continuum setting has been merely formal,
but could be made rigorous in form of a $\Gamma$-convergence result,
showing that minimizers of the functional with discrete detectors converge to minimizers of $F$ as the size of the detectors and distinguished energy ranges uniformly tends to zero.

\subsection{Well-posedness analysis}\label{sec:Analysis}
We briefly prove existence of minimizers for our reconstruction functional $F$.
Note that $F$ is in general not convex so that uniqueness of minimizers cannot be expected:
Indeed, the $\beta$-weighted regularization term in $R^{\alpha,\beta}$ is purposely designed to have different local minima exactly at the pure material distributions.
Furthermore, even though the Kullback--Leibler divergence is convex, its composition with the nonlinear operator $\mathscr I$ no longer is,
as can easily be seen from $(y,s)\mapsto d_{\mathrm{KL}}(y,I_0(E)\exp(-s))$ having Hessian
\begin{equation*}
\begin{pmatrix} 1/y & 1 \\ 1 & I_0(E)\exp(-s)\end{pmatrix},
\end{equation*}
which may become indefinite for large $y$ or $s$.
Another consequence of the nonconvexity is that for existence of minimizers strong compactness in $L^1$ will be required of material distributions with finite cost $F$.
This strong compactness is provided by the total variation seminorm within the regularizer $R^{\alpha,\beta}$, without which the reconstruction problem would not be well-posed.
We will exploit that for given $\boldsymbol w$ the optimal $y$ can be explicitly computed.

\begin{lemma}[Minimizing photon count]\label{thm:optimalY}
For any $f\in L^2(\Sigma)$ and $\boldsymbol w\in (L^\infty(\Omega))^{N_{\text{mat}}}$ the minimizer $y_{\boldsymbol w}$ of $F(\cdot,\boldsymbol w)$ can be explicitly computed pointwise.
Indeed, abbreviating $Y_{\boldsymbol w}=\int_{E_{\max}}^{E_{\min}}y_{\boldsymbol w}\,\mathrm d\mathcal L^1$, the functions $y_{\boldsymbol w}$ and $Y_{\boldsymbol w}$ satisfy
\begin{align*}
y_{\boldsymbol w}
&\textstyle=\mathscr I(\boldsymbol w)\exp\left(\frac{f-Y_{\boldsymbol w}}{\sigma^2}\right),\\
Y_{\boldsymbol w}
&\textstyle=\mathscr F(\boldsymbol w)\exp\left(\frac{f-Y_{\boldsymbol w}}{\sigma^2}\right)\\
&\textstyle=\sigma^2W_0\left(\frac1{\sigma^2}\mathscr F(\boldsymbol w)\exp\left(\frac{f}{\sigma^2}\right)\right)
\end{align*}
for $W_0$ the Lambert-W function which is the inverse of $z\mapsto z\exp(z)$ (cf.\ \cite{borwein2016}).
\end{lemma}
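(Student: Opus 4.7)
The plan is to fix $\boldsymbol w\in (L^\infty(\Omega))^{N_{\text{mat}}}$ and exploit that only the first two terms of $F$ depend on $y$, and that these two terms depend on $y$ only through its values at each point $(\boldsymbol x,\boldsymbol\theta)\in\Sigma$ and the one-dimensional integral $Y_{\boldsymbol w}(\boldsymbol x,\boldsymbol\theta)=\int y\,\mathrm d\mathcal L^1$. By a Fubini-type argument we may therefore minimise pointwise in $(\boldsymbol x,\boldsymbol\theta)$: for almost every such point we minimise, over nonnegative functions $y(\cdot):[E_{\min},E_{\max}]\to[0,\infty)$, the functional
\begin{equation*}
G(y)=\frac{1}{2\sigma^2}\Bigl(f-\int_{E_{\min}}^{E_{\max}}y\,\mathrm d\mathcal L^1\Bigr)^{\!2}+\int_{E_{\min}}^{E_{\max}}d_{\mathrm{KL}}(y,\mathscr I(\boldsymbol w))\,\mathrm d\mathcal L^1.
\end{equation*}
This $G$ is strictly convex, proper and coercive (the Kullback--Leibler term is strictly convex in $y$ and blows up superlinearly at infinity), so a unique minimiser exists.

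Next I would derive the Euler--Lagrange equation. Differentiating in the direction of a variation $\varphi$ of $y$, the quadratic term contributes $-\frac{1}{\sigma^2}(f-Y_{\boldsymbol w})\int\varphi\,\mathrm d\mathcal L^1$, while $\partial_y d_{\mathrm{KL}}(y,\bar y)=\log(y/\bar y)$ gives $\int\log(y/\mathscr I(\boldsymbol w))\,\varphi\,\mathrm d\mathcal L^1$. Setting the first variation to zero for all admissible $\varphi$ yields the pointwise condition
\begin{equation*}
\log\!\Bigl(\tfrac{y}{\mathscr I(\boldsymbol w)}\Bigr)=\tfrac{f-Y_{\boldsymbol w}}{\sigma^2}\quad\Longleftrightarrow\quad y=\mathscr I(\boldsymbol w)\exp\!\Bigl(\tfrac{f-Y_{\boldsymbol w}}{\sigma^2}\Bigr),
\end{equation*}
which is the first asserted formula. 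Integrating over $E$ (note the $E$-dependence is entirely in the $\mathscr I(\boldsymbol w)$ factor and the constant-in-$E$ exponential) gives $Y_{\boldsymbol w}=\mathscr F(\boldsymbol w)\exp((f-Y_{\boldsymbol w})/\sigma^2)$, the second claim.

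Finally I would invert this transcendental equation. Writing $u=Y_{\boldsymbol w}/\sigma^2$, it becomes $u\,e^u=\sigma^{-2}\mathscr F(\boldsymbol w)\exp(f/\sigma^2)$; since the right-hand side is nonnegative and $W_0$ is the inverse of $z\mapsto z e^z$ on $[0,\infty)$, we obtain $Y_{\boldsymbol w}=\sigma^2 W_0\bigl(\sigma^{-2}\mathscr F(\boldsymbol w)\exp(f/\sigma^2)\bigr)$. The main technical point I expect to have to justify carefully is that the formal pointwise-in-$E$ first-order condition really characterises the minimiser globally, i.e.\ that $G$ is (strictly) convex and coercive on the relevant function space so that critical points are minimisers, and that the resulting $y_{\boldsymbol w}$ is measurable in $(\boldsymbol x,\boldsymbol\theta)$ (which follows because $W_0$ and $\exp$ are continuous and $f,\mathscr F(\boldsymbol w),\mathscr I(\boldsymbol w)$ are measurable). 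Boundary cases $\mathscr I(\boldsymbol w)=0$ or $f=0$ are handled by the usual conventions $0\log 0=0$ and $W_0(0)=0$, which make both formulas consistent.
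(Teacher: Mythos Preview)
Your proposal is correct and follows essentially the same route as the paper: both invoke strict convexity of $F(\cdot,\boldsymbol w)$ in $y$, write down the first-order optimality condition $\log(y/\mathscr I(\boldsymbol w))=(f-Y_{\boldsymbol w})/\sigma^2$, integrate in $E$ to obtain the implicit equation for $Y_{\boldsymbol w}$, and then recognise the Lambert-$W$ form after the substitution $u=Y_{\boldsymbol w}/\sigma^2$. Your additional remarks on coercivity, measurability, and the degenerate cases $\mathscr I(\boldsymbol w)=0$ are more thorough than what the paper records but do not change the argument.
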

\begin{proof}
The functional $F$ is strictly convex in $y$.
Thus it suffices to solve the first order optimality condition
\begin{equation*}
0=\partial_yF(y_{\boldsymbol w},\boldsymbol w)
=\frac1{\sigma^2}\left(\int_{E_{\min}}^{E_{\max}}y_{\boldsymbol w}\,\mathrm d\mathcal L^1-f\right)+\log\frac{y_{\boldsymbol w}}{\mathscr I(\boldsymbol w)}
=\frac1{\sigma^2}\left(Y_{\boldsymbol w}-f\right)+\log\frac{y_{\boldsymbol w}}{\mathscr I(\boldsymbol w)},
\end{equation*}
which yields the first equation.
Integrating both sides of that equation over $[E_{\min},E_{\max}]$
and using $\mathscr F(\boldsymbol w)=\int_{E_{\min}}^{E_{\max}}\mathscr I(\boldsymbol w)\,\mathrm d\mathcal L^1$ yields the second equation.
Multiplying that second equation by $\frac1{\sigma^2}\exp\frac{Y_{\boldsymbol w}}{\sigma^2}$ yields
\begin{equation*}
\frac{Y_{\boldsymbol w}}{\sigma^2}\exp\frac{Y_{\boldsymbol w}}{\sigma^2}
=\frac1{\sigma^2}\mathscr F(\boldsymbol w)\exp\left(\frac{f}{\sigma^2}\right)
\end{equation*}
so that indeed $Y_{\boldsymbol w}=\sigma^2W_0\big(\frac1{\sigma^2}\mathscr F(\boldsymbol w)\exp\big(\frac{f}{\sigma^2}\big)\big)$.
\end{proof}

Now we can show existence of minimizers for $F$.

\begin{theorem}[Existence of minimizers]
For any $f\in L^2(\Sigma)$ there exists a minimizer $(y,\boldsymbol w)\in L^\infty((E_{\min},E_{\max})\times\Sigma)\times(\mathrm{BV}(\Omega))^{N_{\text{mat}}}$ of $F$ with finite $F(y,\boldsymbol w)$.
\end{theorem}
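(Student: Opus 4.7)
The approach is the direct method of the calculus of variations, applied to the reduced functional $\tilde F(\boldsymbol w):=F(y_{\boldsymbol w},\boldsymbol w)$, where $y_{\boldsymbol w}$ is the explicit pointwise minimizer furnished by Lemma \ref{thm:optimalY}. Since $y\mapsto F(y,\boldsymbol w)$ is strictly convex with $y_{\boldsymbol w}$ as unique minimizer, we have $\inf_{y,\boldsymbol w}F=\inf_{\boldsymbol w}\tilde F$, and any minimizer $\boldsymbol w^*$ of $\tilde F$ immediately yields a minimizer $(y_{\boldsymbol w^*},\boldsymbol w^*)$ of $F$. This reduction removes the need to control $y$ independently in the minimizing sequence.

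For coercivity, fix a minimizing sequence $(\boldsymbol w_n)$. Finiteness of $\tilde F(\boldsymbol w_n)$ combined with the indicator $\iota_\Delta$ inside $R_1$ forces $\boldsymbol w_n(\boldsymbol x)\in\Delta$ almost everywhere, giving $\|w_{i,n}\|_{L^\infty}\le 1$. Since $|R_2|$ is uniformly bounded on $\Delta$ and both fidelity terms are nonnegative, the assumption $\tilde F(\boldsymbol w_n)\le C$ translates into a uniform bound on $\sum_i\mathrm{TV}(w_{i,n})$. By Rellich compactness for BV functions, a subsequence (still denoted $\boldsymbol w_n$) satisfies $\boldsymbol w_n\to\boldsymbol w^*$ strongly in $L^1(\Omega;\R^{N_{\text{mat}}})$ and pointwise a.e., with $\boldsymbol w^*\in\Delta$ a.e.

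Next I would pass to the limit term by term. The TV-seminorm is $L^1$-lower semicontinuous, the indicator $\iota_\Delta$ survives the a.e.\ limit, and dominated convergence makes the quadratic $R_2$ continuous, so $R^{\alpha,\beta}(\boldsymbol w^*)\le\liminf R^{\alpha,\beta}(\boldsymbol w_n)$. By Lemma \ref{lem:L1Continuity of Cpoly}, $\mathscr I(\boldsymbol w_n)\to\mathscr I(\boldsymbol w^*)$ in $L^1((E_{\min},E_{\max})\times\Sigma)$ and $\mathscr F(\boldsymbol w_n)\to\mathscr F(\boldsymbol w^*)$ in $L^1(\Sigma)$, so after a further subsequence both converge a.e. Inserting this into the explicit formulas of Lemma \ref{thm:optimalY} and invoking continuity of the Lambert-W function and the exponential yields pointwise convergence $Y_{\boldsymbol w_n}\to Y_{\boldsymbol w^*}$ and $y_{\boldsymbol w_n}\to y_{\boldsymbol w^*}$ almost everywhere. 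Fatou's lemma applied to the Gaussian fidelity and to the Kullback--Leibler integrand (using joint lower semicontinuity of $d_{\mathrm{KL}}$ on $[0,\infty)^2$) then gives lower semicontinuity of both fidelity terms, so $\tilde F(\boldsymbol w^*)\le\liminf\tilde F(\boldsymbol w_n)=\inf\tilde F$, i.e.\ $(y_{\boldsymbol w^*},\boldsymbol w^*)$ minimizes $F$. The $L^\infty$-bound on $y_{\boldsymbol w^*}$ is read off from $\mathscr I(\boldsymbol w^*)\le I_0(E)$ together with the explicit Lambert-W expression for $Y_{\boldsymbol w^*}$.

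The principal obstacle is the composition of the convex KL divergence with the nonlinear forward map $\mathscr I$: convexity of the outer function alone does not buy weak lower semicontinuity, so the argument must rely on \emph{strong} BV-to-$L^1$ compactness of $\boldsymbol w_n$—this is precisely why the TV component of the regularizer is indispensable—and then transport that strong convergence through $\mathscr I$ via Lemma \ref{lem:L1Continuity of Cpoly} to obtain pointwise convergence of the optimal photon counts. Without the TV term, compactness would be lost and the nonconvexity in the data term would preclude passing to the limit.
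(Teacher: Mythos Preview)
Your proof is correct and follows essentially the same approach as the paper's: reduce to the optimal photon count $y_{\boldsymbol w}$ via Lemma~\ref{thm:optimalY}, use the TV bound and BV-to-$L^1$ compactness to extract a strongly $L^1$-convergent subsequence, transport this through $\mathscr I$ and $\mathscr F$ via Lemma~\ref{lem:L1Continuity of Cpoly} to obtain pointwise convergence of $y_{\boldsymbol w_n}$, and conclude by Fatou's lemma together with lower semicontinuity of $\mathrm{TV}$. The only cosmetic difference is that you explicitly introduce the reduced functional $\tilde F$, whereas the paper works with a minimizing sequence for $F$ and takes $y^n=y_{\boldsymbol w^n}$ without loss of generality.
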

\begin{proof}
We follow the standard direct method of the calculus of variations.
The choice $y=0$, $w_1=1$, $w_2=\ldots=w_{N_{\text{mat}}}=0$ satisfies
$F(y,\boldsymbol w)=\frac1{2\sigma^2}\|f\|_{L^2}^2+\int_\Sigma\int_{E_{\max}}^{E_{\min}}\mathscr I(\boldsymbol w)\,\mathrm d\mathcal L^1\,\mathrm d\mathcal H^n<\infty$
so that $F$ is proper.
Furthermore, since $d_{\mathrm{KL}}\geq0$ we have $F(y,\boldsymbol w)\geq-\beta\mathcal L^n(\Omega)N_{\text{mat}}(N_{\text{mat}}-1)$ so that $F$ is bounded from below.

Now consider a minimizing sequence $(y^n,\boldsymbol w^n)$, $n=1,2,\ldots$, such that $F(y^n,\boldsymbol w^n)$ converges monotonically to $\inf F$.
Without loss of generality we may assume $F(y^1,\boldsymbol w^1)<\infty$
so that for all $n$ we have $w_1^n,\ldots,w_{N_{\text{mat}}}^n\geq0$ with $w_1^n+\ldots+w_{N_{\text{mat}}}^n=1$ almost everywhere (otherwise $R^{\alpha,\beta}$ would be infinite).
Furthermore we may assume $y^n=y_{\boldsymbol w^n}$ to be the optimal photon count from \cref{thm:optimalY}.

Since $\alpha\mathrm{TV}(w_i^n)$ is part of $F$, all $w_1^n,\ldots,w_{N_{\text{mat}}}^n$ are uniformly bounded in $\mathrm{BV}(\Omega)$
so that there exists a weakly-* converging subsequence $\boldsymbol w^n\stackrel*\rightharpoonup\overline{\boldsymbol w}$ in $(\mathrm{BV}(\Omega))^{N_{\text{mat}}}$,
which for simplicity we again index by $n$.
By the compact embedding of $BV$ into $L^1$ we also have strong convergence in $L^1(\Omega)$
and thus by \cref{lem:L1Continuity of Cpoly} that
$\mathscr I(\boldsymbol w^n)\to\mathscr I(\overline{\boldsymbol w})$ in $L^1((E_{\min},E_{\max})\times\Sigma)$
and  $\mathscr F(\boldsymbol w^n)\to\mathscr F(\overline{\boldsymbol w})$ in $L^1(\Sigma)$.
After extracting a further subsequence we may in both cases even assume pointwise convergence almost everywhere.
\Cref{thm:optimalY} then implies pointwise convergence almost everywhere
of $Y_{\boldsymbol w^n}$ and $y^n=y_{\boldsymbol w^n}$ to $Y_{\overline{\boldsymbol w}}$ and $\bar y=y_{\overline{\boldsymbol w}}$.
We thus obtain
\begin{multline*}
%\inf F=
\lim_{n\to\infty}F(y^n,\boldsymbol w^n)
=\lim_{n\to\infty}\frac1{2\sigma^2}\int_\Sigma\left(f-Y_{\boldsymbol w^n}\right)^2\,\mathrm d\mathcal H^n
+\int_\Sigma\int_{E_{\min}}^{E_{\max}}d_{\mathrm{KL}}(y_{\boldsymbol w^n},\mathscr I(\boldsymbol w^n))\,\mathrm d\mathcal{L}^1\,\mathrm d\mathcal H^n\\
+\alpha\sum_{i=1}^{N_{\text{mat}}}\mathrm{TV}(w_i^n)
+\frac\beta2\int_\Omega-|\boldsymbol w^n-\mathbf1/N_{\text{mat}}|^2\,\mathrm{d}\mathcal{L}^n
\geq F(y_{\overline{\boldsymbol w}},\overline{\boldsymbol w}),
\end{multline*}
where the last inequality follows from the sequential lower semi-continuity of $\mathrm{TV}$ with respect to weak-* convergence in $BV$
and from Fatou's lemma since all integrands are lower semi-continuous and uniformly bounded from below.
Thus a minimizer is given by $(y_{\overline{\boldsymbol w}},\overline{\boldsymbol w})$.
\end{proof}

\section{An alternating minimization with EM type update}\label{sec:FBSalg}
\notinclude{\todo[inline]{need to change introduction, probably along the following lines: ADMM is possible, too, and method only applicable in case of high noise and large regularization; advantage: method provably convergent; we will propose method and prove convergence}
The energy $F(y,\boldsymbol{w})$ to be minimized consists of separate summands,
each of which turns out to have a rather simple proximal operator in the variables $y$ and $\boldsymbol w$ (note that for $\beta=0$ the cost $F$ is even convex separately in $y$ and $\boldsymbol w$).
Therefore, the most obvious minimization algorithm would be a standard splitting approach
in which each summand is treated independently
by introducing multiple copies of $y$ and $\boldsymbol w$ (one for each summand)
as well as the auxiliary constraint that these copies coincide.
The minimization in such a splitting approach is typically tackled by the alternating direction method of multipliers (ADMM).
However, in our numerical experiments the ADMM turned out to be instable and did not converge, which we associated with the nonconvexity of $F$.
Resorting to a full augmented Lagrangian method instead (of which the ADMM is a simplification) led to an infeasible computation time for each iteration.
Consequently, we here present an alternating minimization alorithm for $y$ and $\boldsymbol w$
in which the minimization of $\boldsymbol w$ is of forward backward splitting type and does not introduce auxiliary copies of $\boldsymbol w$.
}%\notinclude
We here present an alternating minimization alorithm for $y$ and $\boldsymbol w$
in which the minimization of $\boldsymbol w$ is of forward backward splitting type.
The scheme is provably convergent and, through an update similar to expectation maximization (EM),
quickly reaches material distributions more or less consistent with the measurements.
Note that we will also provide two alternative schemes in \cref{sec:PDscheme,sec:ADMM} that may be more efficient in the regime of low regularization.

\subsection{Alternation and forward backward splitting}
In the $k$th iteration we first minimize for $y$ with $\boldsymbol w$ fixed, which by \cref{thm:optimalY} yields
\begin{equation}\label{eqn:stepY}
y^k
=\mathscr I(\boldsymbol w^{k-1})\exp\left(\tfrac f{\sigma^2}-W_0\left(\mathscr F(\boldsymbol w^{k-1})\exp\left(\tfrac{f}{\sigma^2}\right)\right)\right)
=\mathscr I(\boldsymbol w^{k-1})\tfrac{W_0\left(\mathscr F(\boldsymbol w^{k-1})\exp\left(\tfrac{f}{\sigma^2}\right)\right)}{\mathscr F(\boldsymbol w^{k-1})},
\end{equation}
where we used the identity $\exp(-W_0(z))=W_0(z)/z$.
Then we fix $y$ to the value $y^k$ and perform one minimization step for $\boldsymbol w$.
To this end we first calculate the first order necessary optimality condition for minimizing $F(y^k,\cdot)$.
The G\^ateaux derivative of the data term in direction $\boldsymbol\phi=(\phi_1,\ldots,\phi_{N_{\text{mat}}})$ reads
\begin{equation*}
\partial_{\boldsymbol w}\left(\int_\Sigma\int_{E_{\min}}^{E_{\max}}d_{\mathrm{KL}}(y^k,\mathscr I(\boldsymbol w))\,\mathrm d\mathcal{L}^1\,\mathrm d\mathcal H^n\right)(\boldsymbol\phi)
=\sum_{i=1}^{N_{\text{mat}}}\int_\Sigma\int_{E_{\min}}^{E_{\max}}(y^k-\mathscr I(\boldsymbol w))g_i\,\mathrm d\mathcal{L}^1\,\mathscr D\phi_i\,\mathrm d\mathcal H^n,
\end{equation*}
thus the optimality conditions can be computed as
\begin{equation}\label{eqn:ocW}
0\in
\mathscr D^*\left(\int_{E_{\min}}^{E_{\max}}(y^k-\mathscr I(\boldsymbol w))g_i\,\mathrm d\mathcal{L}^1\right)
+\alpha\partial R_1(\boldsymbol w)_i-\beta(w_i-1/N_{\text{mat}}),\quad
i=1,\ldots,N_{\text{mat}},
\end{equation}
with $\partial R_1(\boldsymbol w)_i$ denoting the $i$th component of the convex subdifferential of $R_1$.
% Abbreviating the expectation maximization (EM) step of the data term (which is typically performed in an EM algorithm) as
% \begin{equation}\label{eqn:stepWEM}
% w_{i,\text{EM}}(\boldsymbol w)
% =w_i\frac{\mathscr D^*\left(\int_{E_{\min}}^{E_{\max}}\mathscr I(\boldsymbol w)g_i\,\mathrm d\mathcal{L}^1\right)}{\mathscr D^*\left(\int_{E_{\min}}^{E_{\max}}y^kg_i\,\mathrm d\mathcal{L}^1\right)},
% % -\varepsilon
% % +\beta\varepsilon\frac{\boldsymbol{w}_k-1/N_{\text{mat}}}{\mathscr{D}^*\left(\int_{E_{\min}}^{E_{\max}}y^kg_i\,\mathrm{d}\mathcal{L}^1\right)}.
% \end{equation}
% this is equivalently rewritten as
% \begin{equation*}
% 0\in
% \frac{\mathscr{D}^*(\int_{E_{\min}}^{E_{\max}}y^kg_i\,\mathrm{d}\mathcal{L}^1)}{w_i+\varepsilon}(w_i+\varepsilon-w_{i,\text{EM}}(\boldsymbol w+\varepsilon))
% +\alpha\partial R_1(\boldsymbol w)_i-\beta(w_i-1/N_{\text{mat}}),\quad
% i=1,\ldots,N_{\text{mat}},
% \end{equation*}
Multiplying by $(w_i+\varepsilon)/\mathscr{D}^*(\int_{E_{\min}}^{E_{\max}}y^kg_i\,\mathrm{d}\mathcal{L}^1)$ for some $\varepsilon>0$ yields
\begin{equation*}
0\in
w_i-w_{i,\text{EM}}^\varepsilon(\boldsymbol w)
+(w_i+\varepsilon)\frac{\alpha\partial R_1(\boldsymbol w)_i-\beta(w_i-1/N_{\text{mat}})}{\mathscr D^*\left(\int_{E_{\min}}^{E_{\max}}y^kg_i\,\mathrm d\mathcal{L}^1\right)},\quad
i=1,\ldots,N_{\text{mat}},
\end{equation*}
where we abbreviated the expectation maximization (EM) step of the data term (which is typically performed in an EM algorithm) as
\begin{equation}\label{eqn:stepWEM}
w_{i,\text{EM}}^\varepsilon(\boldsymbol w)
=(w_i+\varepsilon)\frac{\mathscr D^*\left(\int_{E_{\min}}^{E_{\max}}\mathscr I(\boldsymbol w)g_i\,\mathrm d\mathcal{L}^1\right)}{\mathscr D^*\left(\int_{E_{\min}}^{E_{\max}}y^kg_i\,\mathrm d\mathcal{L}^1\right)}-\varepsilon
% -\varepsilon
% +\beta\varepsilon\frac{\boldsymbol{w}_k-1/N_{\text{mat}}}{\mathscr{D}^*\left(\int_{E_{\min}}^{E_{\max}}y^kg_i\,\mathrm{d}\mathcal{L}^1\right)}.
\end{equation}
We here have an $\varepsilon$ of order $1$ in mind.
Taking a positive $\varepsilon$ is crucial since we actually aim for (and encourage by our model) characteristic functions $w_i$.
For zero $\varepsilon$, multiplying with $w_i+\varepsilon$ would thus lead to large parts of the optimality conditions being annihilated,
and in the subsequently described algorithm this would lead to zero function values staying zero.
We next view the optimality conditions as a fixed point equation for $\boldsymbol w$ and turn it into a fixed point iteration:
To this end we introduce a damping parameter $\omega_k\in(0,1)$ (a judicious choice of which will ensure a monotone energy decrease)
and let $\boldsymbol w^k$ solve
\begin{equation}\label{eqn:oc}
0\in
\frac{w_i^k-(1-\omega_k)w_i^{k-1}}{\omega_k}-w_{i,\text{EM}}^\varepsilon(\boldsymbol w^{k-1})
+(w_i^{k-1}+\varepsilon)\frac{\alpha\partial R_1(\boldsymbol w^k)_i-\beta(w_i^{k-1}-1/N_{\text{mat}})}{\mathscr D^*\left(\int_{E_{\min}}^{E_{\max}}y^kg_i\,\mathrm d\mathcal{L}^1\right)}
\end{equation}
for $i=1,\ldots,N_{\text{mat}}$.
This, however, is nothing else but the first order optimality condition of the convex minimization problem
\begin{align}\label{eqn:stepW}
\boldsymbol w^k
&=\argmin_{\boldsymbol w}
\frac12\int_\Omega\sum_{i=1}^{N_{\text{mat}}}
r_i^k\left(w_i-v_i^k\right)^2
\,\mathrm d\mathcal L^n
+R_1(\boldsymbol w)\\
&\nonumber\text{with }r_i^k=\frac{\mathscr D^*\left(\int_{E_{\min}}^{E_{\max}}y^kg_i\,\mathrm d\mathcal{L}^1\right)}{\omega_k\alpha(w_i^{k-1}+\varepsilon)}\\
&\nonumber\text{and }v_i^k=(1-\omega_k)w_i^{k-1}+\omega_k\left(w_{i,\text{EM}}^\varepsilon(\boldsymbol w^{k-1})+\frac{\beta(w_i^{k-1}+\varepsilon)(w_i^{k-1}-1/N_{\text{mat}})}{\mathscr D^*\left(\int_{E_{\min}}^{E_{\max}}y^kg_i\,\mathrm d\mathcal{L}^1\right)}\right).
\end{align}
It turns out that $\omega_k$ may typically be chosen at the order of $0.1$, and the proposed algorithm works efficiently in case of noisy data with high regularization.
However, in case of high signal-to-noise ratio (which typically means large $y^k$) and consequently small $\alpha$,
the weights $r_i^k$ become so big that the change of $\boldsymbol w$ in each step of the iteration becomes neglegibly small.
In this setting the algorithm requires an infeasibly large number of iterations so that we will propose an alternative algorithm for that situation.

Summarizing, we propose the three-step iterative scheme given in \cref{alg:modifiedEM}, details of which are provided in the subsequent paragraphs.

\makeatletter
\def\plist@algorithm{Algorithm\space}
\makeatother
\begin{algorithm}[h]
	\caption{Iterative algorithm for polyenergetic CT reconstruction}
	\label{alg:modifiedEM}
	\begin{algorithmic}
		%\Function{functionName} 
		\For{$k=1,\ldots$}
		\State calculate $y^k$ via \eqref{eqn:stepY}
		%\For{$i=1,\ldots,N_{\text{mat}}$}
			\State calculate $w_{i,\text{EM}}(\boldsymbol w^{k-1})$ via \eqref{eqn:stepWEM} for $i=1,\ldots,N_{\text{mat}}$
		%\EndFor
		\State calculate $\boldsymbol w^k$ via \eqref{eqn:stepW}
		\EndFor
		%\EndFunction
	\end{algorithmic}
\end{algorithm}

\subsection{Approximating the Lambert-W function}
The computation of \eqref{eqn:stepY} requires the numerical evaluation of an expression of the form $W_0(a\exp b)/a$ for $W_0$ the Lambert-W function.
We simply achieve this via Newton's method as follows.
The relation $z = W_0(a\exp b)/a$ is by definition of the Lambert-W function equivalent to $az\exp(az) = a\exp b$, which can be rewritten as
\begin{equation*}
0=f(z)
\quad\text{for }
f(z)=\tfrac{\log z}a+z-\tfrac ba.
\end{equation*}
The Newton iteration with iterates $z^j$ to find the zero of $f$ thus reads
\begin{equation*}
z^{j+1} = z^j-\tfrac{f(z^j)}{f'(z^j)} = z^j-\tfrac{(\log z^j)/a+z^j-b/a}{1+1/(az^j)} = \tfrac{1-\log z^j+b}{a+1/z^j}.
\end{equation*}
We simply perform a small fixed number of iterations, starting from the initialization
\begin{equation*}
z^0=\begin{cases}
\exp b&\text{if }a\exp b<\mathrm e,\\
\log a+b-\log(\log a+b)(1-\tfrac1{2\log a+2b})&\text{else,}
\end{cases}
\end{equation*}
which was chosen due to $W_0(s)\approx s$ for small $s$ and $W_0(s)\approx \log(s)-\log(\log s)(1-\frac1{2\log s})$ for large $s$.

\notinclude{
\subsection{ADMM for weighted ROF model}\label{subsec:ADMMWROF}

Optimization problem \eqref{eqn:stepW} is a version of the well-known Rudin--Osher--Fatemi (ROF) model \cite{rudin1992ROF}.
The difference lies in the weighted $L^2$-norm and the additional probability simplex constraint.
For such problems the (alternating) split Bregman iteration \cite{golstein2009} is known to be highly efficient,
which in fact is equivalent to the alternating direction method of multipliers (ADMM), see for instance \cite{TaWu09}.
Therefore we will apply the ADMM to \eqref{eqn:stepW}.
To this end we rewrite the problem as
\begin{multline*}
\argmin_{\boldsymbol w,\boldsymbol u}
\frac12\int_\Omega\sum_{i=1}^{N_{\text{mat}}}
r_i^k\left(u_i-v_i^k\right)^2
\,\mathrm d\mathcal L^n
+\sum_{i=1}^{N_{\text{mat}}}|X_i|(\Omega)+\int_\Omega\iota_\Delta(\boldsymbol u)\,\mathrm{d}\mathcal{L}^n\\
\qquad\text{such that }
\boldsymbol u=(u_1,\ldots,u_{N_{\text{mat}}})=\boldsymbol w
\text{ and }
\boldsymbol X=(X_1,\ldots,X_{N_{\text{mat}}})=\nabla\boldsymbol w
\end{multline*}
(where $|X|(\Omega)$ denotes the total variation of a vector-valued Radon measure $X$ on $\Omega$)
and introduce the corresponding augmented Lagrangian
\begin{align*}
L(\boldsymbol w,\boldsymbol u,\boldsymbol X;\boldsymbol\Lambda,\boldsymbol\lambda)
=\ &\frac12\int_\Omega\sum_{i=1}^{N_{\text{mat}}}r_i^k\left(u_i-v_i^k\right)^2\,\mathrm d\mathcal L^n
+\sum_{i=1}^{N_{\text{mat}}}|X_i|(\Omega)+\int_\Omega\iota_\Delta(\boldsymbol u)\,\mathrm{d}\mathcal{L}^n\\
&+ \langle\boldsymbol\Lambda,\boldsymbol X-\nabla\boldsymbol{w}\rangle +\frac{\mu_1}{2}\|\boldsymbol X-\nabla\boldsymbol{w}\|^2_{L^2}\\
&+\langle \boldsymbol{\lambda},\boldsymbol u-\boldsymbol{w}\rangle +\frac{\mu_2}{2}\|{\boldsymbol{u}}-\boldsymbol{w}\|^2_{L^2}
\end{align*}
for positive weights $\mu_1,\mu_2>0$ and Lagrange multipliers $\boldsymbol\Lambda=(\Lambda_1,\ldots,\Lambda_{N_{\text{mat}}}),\boldsymbol\lambda=(\lambda_1,\ldots,\lambda_{N_{\text{mat}}})$.
Actually, this augmented Lagrangian of course only makes sense for $\boldsymbol X\in L^2(\Omega;\R^{n\times N_{\text{mat}}})$,
but since (as for the ROF model) the ADMM is only applied to the numerically discretized problem we just assume this and stick to the above formal notation.
The ADMM now updates the single variables and Lagrange multipliers alternatingly via
\begin{align}
\label{eqn:ADMM_ROF_w}
\boldsymbol{w}^{n+1}
&= \argmin_{\boldsymbol{w}}L(\boldsymbol w,\boldsymbol u^n,\boldsymbol X^n;\boldsymbol\Lambda^n,\boldsymbol\lambda^n),\\
\label{eqn:ADMM_ROF_u}
{\boldsymbol{u}}^{n+1}
&= \argmin_{{\boldsymbol{u}}}L(\boldsymbol w^{n+1},\boldsymbol u,\boldsymbol X^n;\boldsymbol\Lambda^n,\boldsymbol\lambda^n),\\
\label{eqn:ADMM_ROF_X}
\boldsymbol X^{n+1}
&= \argmin_{\boldsymbol X}L(\boldsymbol w^{n+1},\boldsymbol u^{n+1},\boldsymbol X;\boldsymbol\Lambda^n,\boldsymbol\lambda^n),\\
\label{eqn:ADMM_ROF_Lambda}
\boldsymbol\Lambda^{n+1}
&= \boldsymbol\Lambda^n +\mu_1(\boldsymbol X^{n+1} - \nabla\boldsymbol{w}^{n+1}),\\
\label{eqn:ADMM_ROF_lambda}
\boldsymbol{\lambda}^{n+1}
&= \boldsymbol{\lambda}^n +\mu_2({\boldsymbol{u}}^{n+1} - \boldsymbol{w}^{n+1}). 
\end{align}
This iteration is stopped as soon as the relative change of $\boldsymbol w$ lies below a fixed tolerance.
Assuming for simplicity periodic boundary conditions for $\boldsymbol w$ (the ground truth $\boldsymbol w$ is zero on $\partial\Omega$ and thus also periodic),
the optimality condition for the update of $\boldsymbol w$ reads
\begin{equation*}
(\mu_2-\mu_1\Delta)w_i^{n+1} = \lambda_i^n +\mu_2 u_i^n-\div(\Lambda^n_i +\mu_1 X_i^n),
\qquad i=1,\ldots,N_{\text{mat}}.
\end{equation*}
Denoting the Fourier transform and its inverse by $\mathcal{F}$ and $\mathcal{F}^{-1}$, the first step in the ADMM thus becomes
\begin{equation}\label{eqn:ADMM_ROF_wUpdate}
w_i^{n+1} = \mathcal F^{-1}\left(\frac{\mathcal F[\lambda_i^n +\mu_2 u_i^n-\div(\Lambda^n_i +\mu_1 X_i^n)]}{\mathcal F(\mu_2-\mu_1\Delta)}\right),
\qquad i=1,\ldots,N_{\text{mat}}.
\end{equation}
The second step is equivalent to
\begin{equation}\label{eqn:ADMM_ROF_uUpdate}
\boldsymbol u^{n+1}(x)
=\argmin_{{\boldsymbol{u}(x)}}\sum_{i=1}^{N_{\text{mat}}}(r_i^k(x)+\mu_2)\left(u_i(x)-\tfrac{r_i^k(x)v_i^k(x)+(\mu_2-\lambda_i^n(x))w_i^{n+1}(x)}{r_i^k(x)+\mu_2}\right)^2+\iota_\Delta(\boldsymbol u(x)),
\end{equation}
pointwise for all $x\in\Omega$,
whose solution will be detailed in \cref{sec:simplexProjection}.
Finally, the third step can be performed explicitly and pointwise for all $x\in\Omega$ via
\begin{equation}\label{eqn:ADMM_ROF_XUpdate}
X_i^{n+1}(x)
=\prox\hfil_{|\cdot|/\mu_1}\left(\nabla w_i^{n+1}(x)-\frac{\Lambda_i^n(x)}{\mu_1}\right),
\qquad i=1,\ldots,N_{\text{mat}},
\end{equation}
where for an arbitrary vector $\boldsymbol v\in\R^{N_{\text{mat}}}$ we have
\begin{equation*}
\prox\hfil_{|\cdot|/\mu_1}\boldsymbol v
=\frac{\max\{0,|\boldsymbol v|-\tfrac1{\mu_1}\}}{|\boldsymbol v|}\boldsymbol v.
\end{equation*}
For the reader's convenience the algorithm is summarized in \cref{Alg:ADMM_ROF}.

\begin{algorithm}[h]
	\caption{ADMM for weighted ROF model on probability simplex}
	\label{Alg:ADMM_ROF}
	\begin{algorithmic}
		%\Function{functionName} 
		\State $n=0$
		\Repeat
		\State calculate $w_i^{n+1}$ via \eqref{eqn:ADMM_ROF_wUpdate} for $i=1,\ldots,N_{\text{mat}}$
		\State calculate $\boldsymbol u^{n+1}$ via \eqref{eqn:ADMM_ROF_uUpdate}
		\State calculate $\boldsymbol X_i^{n+1}$ via \eqref{eqn:ADMM_ROF_XUpdate} for $i=1,\ldots,N_{\text{mat}}$
		\State calculate $\boldsymbol\Lambda^{n+1}$ via \eqref{eqn:ADMM_ROF_Lambda}
		\State calculate $\boldsymbol\lambda^{n+1}$ via \eqref{eqn:ADMM_ROF_lambda}
		\State $n\leftarrow n+1$
		\Until $\|\boldsymbol w^{n}-\boldsymbol w^{n-1}\|_{L^2}/\|\boldsymbol w^{n-1}\|_{L^2}<\text{tol}$
		\State \Return $\boldsymbol u$
		%\EndFunction
	\end{algorithmic}
\end{algorithm}

}%\notinclude
\subsection{Primal-dual iteration for weighted ROF model}\label{sec:ROF}
Optimization problem \eqref{eqn:stepW} is a version of the well-known Rudin--Osher--Fatemi (ROF) model \cite{rudin1992ROF}.
The difference lies in the weighted $L^2$-norm and the additional probability simplex constraint.
Various standard convex optimization algorithms can be applied to this problem,
for instance the (alternating) split Bregman iteration \cite{golstein2009}
or the equivalent alternating direction method of multipliers (ADMM) (for the equivalence see for instance \cite{TaWu09}).
In our experiments the primal-dual iteration by Chambolle and Pock \cite{Chambolle2011} converged a little faster for \eqref{eqn:stepW},
which is why we briefly summarize it here.

The saddle-point formulation of \eqref{eqn:stepW} requires a dual variable $\boldsymbol p=(p_1,\ldots,p_{N_{\text{mat}}})$ and reads
\begin{align*}\label{eqn:PDWROF}
\min_{\boldsymbol{w}}\max_{\boldsymbol{p}}\,
&G_1(\boldsymbol{w})-G_2(\boldsymbol{p})+\sum_{i=1}^{N_{\text{mat}}}\langle\nabla w_i,p_i\rangle
\qquad\text{with}\\
&G_1(\boldsymbol{w})
=\frac{1}{2}\int_{\Omega}\sum_{i=1}^{N_{\text{mat}}}r^k_i\left(w_i-v_i^k\right)^2\,\mathrm{d}\mathcal{L}^n
+\int_\Omega\iota_\Delta(\boldsymbol{w})\,\mathrm{d}\mathcal{L}^n,\\
&G_2(\boldsymbol{p})
=\int_\Omega\sum_{i=1}^{N_{\text{mat}}}\iota_{B_1}(p_i)\,\mathrm{d}\mathcal{L}^n,
\end{align*}
where as before $\iota_A$ denotes the convex indicator function of a set $A$ and where $B_1\subset\R^n$ is the Euclidean unit ball.
The corresponding primal-dual method with primal and dual step sizes $\tau,\sigma>0$ and relaxation parameter $\theta\in[1,2]$ then reads
\begin{align*}
\boldsymbol p^{k+1}&={\prox}_{\sigma G_2}(\boldsymbol p^k+\sigma\nabla\overline{\boldsymbol w}^k),\\
\boldsymbol w^{k+1}&={\prox}_{\tau G_1}(\boldsymbol w^k+\tau\div\boldsymbol p^{k+1}),\\
\overline{\boldsymbol w}^{k+1}&=\boldsymbol w^k+\theta(\boldsymbol w^{k+1}-\boldsymbol w^k).
\end{align*}
Abbreviating $\tilde p_i^k=p_i^k+\sigma\nabla\overline w_i^{k}$, the first step just becomes the pointwise projection
\begin{equation*}
p_i^{k+1}(x)=\tilde{p}^k_i(x)/\max\{1,|\tilde{p}^k_i(x)|\}\qquad\text{for }i=1,\ldots,N_{\text{mat}}.
\end{equation*}
Similarly, abbreviating $\tilde w_i^k=w_i^k+\tau\div p_i^{k+1}$, the second step can be written as
\begin{align}
\boldsymbol w^{k+1}(x)
&=\argmin_{\boldsymbol w}\frac12\sum_{i=1}^{N_{\text{mat}}}|w_i-\tilde w_i^k(x)|^2
+\tau\left(\frac{1}{2}\int_{\Omega}\sum_{i=1}^{N_{\text{mat}}}r^k_i(x)\left(w_i-v_i^k(x)\right)^2\right)
+\tau\iota_\Delta(\boldsymbol{w})\notag\\
&=\argmin_{\boldsymbol w}\frac12\sum_{i=1}^{N_{\text{mat}}}(1+\tau r_i^k(x))\left|w_i-\frac{\tilde w_i^k(x)+\tau r_i^k(x)v_i^k(x)}{1+\tau r_i^k(x)}\right|^2+\iota_\Delta(\boldsymbol{w}),
\label{eqn:PD_ROF_wUpdate}
\end{align}
whose solution is described in \cref{sec:simplexProjection}.

% We summarize the above steps in the following pseudocode 
% 
% \begin{algorithm}[h]
% 	\caption{Primal-Dual for weighted ROF model on probability simplex}
% 	\label{Alg:PD_WROF}
% 	\begin{algorithmic}
% 		%\Function{functionName} 
% 		\State $n=0$, $\sigma,\tau>0$ and $\theta\in[0,1]$
% 		\Repeat
% 		\State calculate $p_i$ via \eqref{eqn:projL2ball}
% 		\State calculate $w_i^{n+1}$ via \eqref{eqn:varmodelGascent} for $i=1,\ldots,N_{\text{mat}}$
% 		\State calculate $\tilde{w}^{n+1} = w_i^{n+1}+ \theta(w^{n+1}_i - w^n_i)$ for $i=1,\ldots,N_{\text{mat}}$
% 		\State $n\leftarrow n+1$
% 		\Until $\|\boldsymbol w^{n}-\boldsymbol w^{n-1}\|_{L^2}/\|\boldsymbol w^{n-1}\|_{L^2}<\text{tol}$
% 		\State \Return $\boldsymbol w, \boldsymbol{p}$
% 		%\EndFunction
% 	\end{algorithmic}
% \end{algorithm}

\subsection{Weighted projection onto probability simplex}\label{sec:simplexProjection}
The numerical solution of \eqref{eqn:PD_ROF_wUpdate} requires the computation of the orthogonal projection onto the probability simplex $\Delta$ with respect to a weighted Euclidean inner product.
More specifically, given positive weights $r_1,\ldots,r_m$ and values $v_1,\ldots,v_m$ (where in our setting $m=N_{\text{mat}}$) we need to solve the minimization problem
\begin{equation}\label{eq:projwsimplex}
 \min_{\boldsymbol{w}\in\Delta}\frac{1}{2}\sum_{i=1}^{m}r_i(w_i-v_i)^2.
\end{equation}
Our solution to the problem is a straightforward generalization of \cite{chen2011projection}, which solves the problem for the special case $r_1=\ldots=r_m=1$.

We begin by stating a small generalization of Moreau's identity.
\begin{lemma}[Moreau's identity]\label{thm:Moreau}
	Let  $A\in\R^{m\times m}$ be positive semi-definite and $f:\R^m\to(-\infty,\infty]$ be proper and convex with Legendre--Fenchel conjugate $f^*$, then for all $\boldsymbol v\in\R^m$ we have
	   \begin{equation*}\label{eq:projwsimplex1}
	   \boldsymbol{v}= \argmin_{\boldsymbol{w}}\left\{\tfrac{1}{2}(\boldsymbol{w}\!-\!\boldsymbol{v})^TA(\boldsymbol{w}\!-\!\boldsymbol{v}) +f(\boldsymbol{w})\right\} + A^{\!-\!1}\argmin_{\boldsymbol{w}}\left\{\tfrac{1}{2}(\boldsymbol{w}\!-\!A\boldsymbol{v})^TA^{\!-\!1}(\boldsymbol{w}\!-\!A\boldsymbol{v})+f^*(\boldsymbol{w})\right\}.
	   \end{equation*}
\end{lemma}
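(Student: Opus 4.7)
My plan is to characterize the two argmins by their first-order optimality conditions and then verify the identity using Fenchel--Young reciprocity, exactly as in the proof of the classical Moreau identity (which is recovered for $A=\mathrm I$). Throughout I shall tacitly assume that $A$ is actually positive definite (so that $A^{-1}$ makes sense and both quadratic terms are strictly convex, guaranteeing unique minimizers) and that $f$ is lower semi-continuous (so that $\partial f$ and $\partial f^*$ are dual in the Fenchel sense).

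First I would abbreviate
\begin{equation*}
\boldsymbol u = \argmin_{\boldsymbol w}\left\{\tfrac12(\boldsymbol w-\boldsymbol v)^{\!T} A(\boldsymbol w-\boldsymbol v)+f(\boldsymbol w)\right\},\qquad
\boldsymbol z = \argmin_{\boldsymbol w}\left\{\tfrac12(\boldsymbol w-A\boldsymbol v)^{\!T} A^{-1}(\boldsymbol w-A\boldsymbol v)+f^*(\boldsymbol w)\right\},
\end{equation*}
and write out the first-order conditions. The one for $\boldsymbol u$ is $0\in A(\boldsymbol u-\boldsymbol v)+\partial f(\boldsymbol u)$, i.e.\
$A(\boldsymbol v-\boldsymbol u)\in\partial f(\boldsymbol u)$. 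The one for $\boldsymbol z$ is $0\in A^{-1}(\boldsymbol z-A\boldsymbol v)+\partial f^*(\boldsymbol z)$, i.e.\
$\boldsymbol v-A^{-1}\boldsymbol z\in\partial f^*(\boldsymbol z)$. The target identity $\boldsymbol v=\boldsymbol u+A^{-1}\boldsymbol z$ is equivalent to $\boldsymbol z=A(\boldsymbol v-\boldsymbol u)$, so I would propose this candidate and verify it.

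With $\boldsymbol z:=A(\boldsymbol v-\boldsymbol u)$ one has $A^{-1}\boldsymbol z=\boldsymbol v-\boldsymbol u$, and the required optimality condition for the second minimization becomes $\boldsymbol u\in\partial f^*(\boldsymbol z)$. But the optimality condition for $\boldsymbol u$ reads $\boldsymbol z\in\partial f(\boldsymbol u)$, and the Fenchel--Young reciprocity $\boldsymbol q\in\partial f(\boldsymbol p)\iff\boldsymbol p\in\partial f^*(\boldsymbol q)$ gives exactly what is needed. By strict convexity of the second objective its minimizer is unique, hence this candidate really is $\boldsymbol z$, and the identity $\boldsymbol v=\boldsymbol u+A^{-1}\boldsymbol z$ follows. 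I do not foresee any serious obstacle; the only delicate point is making precise the regularity hypotheses on $f$ (properness, convexity, lower semi-continuity) needed to legitimately use Fenchel--Young, and interpreting the positive semi-definiteness of $A$ as actually positive definite so that $A^{-1}$ and all argmins are well defined.
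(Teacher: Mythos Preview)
Your argument is correct. You characterize both minimizers via their optimality conditions, propose $\boldsymbol z=A(\boldsymbol v-\boldsymbol u)$, and verify it by Fenchel--Young reciprocity; uniqueness from strict convexity closes the loop. The caveats you flag (positive \emph{definiteness} of $A$ so that $A^{-1}$ exists and both argmins are unique, lower semi-continuity of $f$ so that $\partial f$ and $\partial f^*$ are inverse to each other) are exactly the right ones, and the paper implicitly relies on them too.

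The paper, however, takes a different route: it performs the linear change of variables $\boldsymbol z=\sqrt A\,\boldsymbol w$ in the first problem and $\boldsymbol z=\sqrt{A^{-1}}\,\boldsymbol w$ in the second, introduces $g(\boldsymbol z)=f(\sqrt{A^{-1}}\boldsymbol z)$, checks that $g^*(\boldsymbol a)=f^*(\sqrt A\,\boldsymbol a)$, and then applies the \emph{classical} Moreau identity $\prox_g+\prox_{g^*}=\mathrm{id}$ at the point $\sqrt A\,\boldsymbol v$. Your proof is more self-contained (it does not invoke the standard identity as a black box) and avoids introducing the matrix square root altogether; the paper's version is a clean reduction that makes the result an immediate corollary of the unweighted case. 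Both are short and essentially equivalent in strength.
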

\begin{proof}
Introducing $\boldsymbol{z}=\sqrt{A}\boldsymbol{w}$ and $g(\boldsymbol{z}) = f\left(\sqrt{A^{-1}}\boldsymbol{z}\right)$, the first term on the right-hand side can be rewritten as
\begin{equation*}
R
=\argmin_{\boldsymbol{w}}\left\{\tfrac{1}{2}(\boldsymbol{w}\!-\!\boldsymbol{v})^TA(\boldsymbol{w}\!-\!\boldsymbol{v}) +f(\boldsymbol{w})\right\}
=\sqrt{A^{-1}}\argmin_{\boldsymbol{z}}\left\{\tfrac{1}{2}|\boldsymbol{z}-\sqrt A\boldsymbol{v}|^2+g(\boldsymbol{z})\right\},
\end{equation*}
while the substitution $\boldsymbol{z}=\sqrt{A^{-1}}\boldsymbol{w}$ turns the second term into
\begin{equation*}
S
=A^{\!-\!1}\argmin_{\boldsymbol{w}}\left\{\tfrac{1}{2}(\boldsymbol{w}\!-\!A\boldsymbol{v})^TA^{\!-\!1}(\boldsymbol{w}\!-\!A\boldsymbol{v})+f^*(\boldsymbol{w})\right\}
=\sqrt{A^{\!-\!1}}\argmin_{\boldsymbol{z}}\left\{\tfrac{1}{2}|\boldsymbol{z}\!-\!\sqrt A\boldsymbol{v}|^2+f^*\left(\sqrt{A}\boldsymbol{z}\right)\right\}.
\end{equation*}
Note now that the substitution $\boldsymbol{w}=\sqrt{A}\boldsymbol{z}$ implies
\[
g^*(\boldsymbol{a})
= \sup_{\boldsymbol{z}} \:\boldsymbol{z}^T\boldsymbol{a} -f\left(\sqrt{A^{-1}}\boldsymbol{z}\right) 
=\sup_{\boldsymbol{w}}\:\boldsymbol{w}^T\sqrt{A}\boldsymbol{a} -f^*\left(\boldsymbol{w}\right) 
=f^{*}(\sqrt{A}\boldsymbol{a}).
\]
Thus we have
\[
R+S = \sqrt{A^{-1}}\left(\prox\hfil_{g}(\sqrt{A}\boldsymbol{v})+\prox\hfil_{g^*}(\sqrt{A}\boldsymbol{v})\right)
=\sqrt{A^{-1}}\sqrt{A}\boldsymbol{v}
= \boldsymbol{v}.
\qedhere
\]
\end{proof}

We next state the solution to \eqref{eq:projwsimplex}.

\begin{prop}[Weighted projection onto probability simplex]\label{thrm:projontosimplexsolution}
	Let $j_1,\ldots,j_m$ be a permutation such that $r_{j_{1}}v_{j_{1}}\leq \ldots \leq r_{j_{m}}v_{j_{m}}$, and define
	\begin{equation}\label{eq:ti}
	 t_i = \frac{\sum_{k=i+1}^{m}v_{j_k}-1}{\sum_{k=i+1}^{m}\frac{1}{r_{j_k}}}.
	\end{equation}
	Then the solution of (\ref{eq:projwsimplex}) is given by 
	\[
	  w_i = \max\left(0, v_i - \tfrac{\tilde{t}}{r_i} \right),\quad i=1,\ldots m,
	\]
	where $\tilde{t}=t_k$ for that index $k\in\{1,\ldots,m-1\}$ which satisfies $r_{j_{k}}v_{j_{k}}\leq t_k \leq r_{j_{k+1}}v_{j_{k+1}}$.
\end{prop}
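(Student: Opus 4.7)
The plan is to apply the generalized Moreau identity of \cref{thm:Moreau} with $A=\mathrm{diag}(r_1,\ldots,r_m)$ and $f=\iota_\Delta$, which recasts \eqref{eq:projwsimplex} as a dual minimization whose one-dimensional structure makes it amenable to explicit resolution. Denoting the minimizer of \eqref{eq:projwsimplex} by $\boldsymbol w$ and the minimizer of the dual problem $\argmin_{\boldsymbol z}\{\tfrac12(\boldsymbol z-A\boldsymbol v)^T A^{-1}(\boldsymbol z-A\boldsymbol v)+\iota_\Delta^*(\boldsymbol z)\}$ by $\boldsymbol z^*$, the identity gives $\boldsymbol w=\boldsymbol v-A^{-1}\boldsymbol z^*$, so it suffices to determine $\boldsymbol z^*$ and in particular the threshold $\tilde t$.

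First I would compute $\iota_\Delta^*(\boldsymbol z)=\sup_{\boldsymbol y\in\Delta}\langle\boldsymbol y,\boldsymbol z\rangle=\max_i z_i$, since the supremum of a linear functional over the simplex is attained at a vertex. The dual problem then reads $\min_{\boldsymbol z}\sum_i\tfrac1{2r_i}(z_i-r_iv_i)^2+\max_i z_i$. Introducing the auxiliary variable $t=\max_i z_i$ and enforcing $z_i\leq t$, for fixed $t$ the componentwise optimum is clearly $z_i=\min(t,r_iv_i)$ (each $z_i$ wants to sit at its unconstrained minimum $r_iv_i$, but is clipped from above). Substituting back, the problem reduces to minimizing the one-dimensional function $g(t)=\sum_{i:\,r_iv_i>t}\tfrac1{2r_i}(r_iv_i-t)^2+t$ over $t\in\R$.

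Next, $g$ is convex with continuous derivative $g'(t)=-\sum_{i:\,r_iv_i>t}(v_i-t/r_i)+1$ that is piecewise linear with strictly positive slope $\sum_{i:\,r_iv_i>t}1/r_i$ wherever the active set is nonempty, and it ranges from $-\infty$ (as $t\to-\infty$) to $+1$ (as $t\to+\infty$); hence it admits a unique zero $\tilde t$. On the interval $(r_{j_k}v_{j_k},r_{j_{k+1}}v_{j_{k+1}})$ the active set equals $\{j_{k+1},\ldots,j_m\}$, and there the equation $g'(t)=0$ yields exactly $t=t_k$ as in \eqref{eq:ti}. Thus $\tilde t=t_k$ for the unique index $k$ whose associated $t_k$ lies in the corresponding interval, i.e., satisfies $r_{j_k}v_{j_k}\leq t_k\leq r_{j_{k+1}}v_{j_{k+1}}$.

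To conclude, plugging $\boldsymbol z^*$ back into Moreau's identity yields the claimed formula: for $r_iv_i\leq\tilde t$ one has $z_i^*=r_iv_i$ and hence $w_i=v_i-z_i^*/r_i=0$, whereas for $r_iv_i>\tilde t$ one has $z_i^*=\tilde t$ and hence $w_i=v_i-\tilde t/r_i>0$; both cases collapse into $w_i=\max(0,v_i-\tilde t/r_i)$. The main obstacle is the careful verification that the sandwich condition uniquely identifies $k$ (and thus $\tilde t$), which is precisely the monotonicity argument for $g'$ sketched above; edge cases in which no or only one component ends up clipped can be treated analogously by extending $k$ to $\{0,m-1\}$ with the natural conventions $r_{j_0}v_{j_0}=-\infty$ and $r_{j_{m+1}}v_{j_{m+1}}=+\infty$.
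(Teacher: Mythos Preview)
Your proof is correct and follows essentially the same route as the paper: apply the generalized Moreau identity with $A=\mathrm{diag}(r_1,\ldots,r_m)$ and $f=\iota_\Delta$, compute $\iota_\Delta^*=\max_i$, reduce the dual problem to a one-dimensional piecewise quadratic in $t$, and read off the threshold $\tilde t$ from the zero of its piecewise-linear derivative. Your write-up is in fact slightly cleaner than the paper's in that you have the correct index set $\{i:r_iv_i>t\}$ in the reduced objective (the paper has a typo there), you make the monotonicity/uniqueness argument for $\tilde t$ explicit, and you address the boundary cases $k\in\{0,m-1\}$.
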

\begin{proof}
Applying \cref{thm:Moreau} with $f=\iota_\Delta$ the indicator function of $\Delta$ and $A=\mathrm{diag}(r_1,\ldots,r_m)$ a diagonal matrix, we obtain
	\begin{equation*}\label{eq:proxdeltaq1}
	\argmin_{\boldsymbol{w}\in\Delta}\frac{1}{2}\sum_{i=1}^{m}r_i(w_i-v_i)^2
	= \boldsymbol{v}- A^{-1}\argmin_{\boldsymbol{w}}\left\{\frac{1}{2}\sum_{i=1}^m \frac{1}{r_i} (w_i-r_iv_i)^2 + i^*_{\Delta}(\boldsymbol{w})\right\}.
	\end{equation*}
	Now we observe that 
	\begin{equation*}\label{eq:dual indicator function}
		i^*_{\Delta}(\boldsymbol{w}) = \sup_{\boldsymbol{v}\in\Delta}\: \boldsymbol{w}^T\boldsymbol{v} = \max(w_1,\ldots,w_n),
	\end{equation*}
	and that 
	\[\min_{\boldsymbol{w}}\bigg\{\frac{1}{2}\sum_{i=1}^m \frac{1}{r_i} (w_i-r_iv_i)^2 + i^*_{\Delta}(\boldsymbol{w})\bigg\} = \min_t f(t)
	\]
	for
	\begin{equation*}\label{eq:funct}
		f(t)  = \min\left\{t+\frac{1}{2}\sum_{i=1}^m \frac{1}{r_i} (w_i-r_iv_i)^2\ \middle|\ \boldsymbol w\in\R^m,\ \max(w_1,\ldots,w_n)=t \right\}.
	\end{equation*}
For fixed $t$, the minimizer in the definition of $f$ is 
\[
  w_i(t) = \begin{cases}
  t & \text{if } r_iv_i>t, \\
  r_iv_i & \text{else},
  \end{cases}
\]
thus we can write 
\[
f(t) = t +\frac{1}{2}\sum_{r_iv_i\leq t}  \frac{1}{r_i}(t - r_iv_i)^2.
\] 
 Note that $f$ is piecewise quadratic, convex, continuous and has continuous derivative
 \[
  f'(t) = 1 +\sum_{r_iv_i\leq t} \left(\tfrac{t}{r_i} - v_i\right).
 \]
 Its unique minimizer satisfies $f'(t)=0$ and is thus given by $t=t_k$ for that index $k$ which satisfies $r_{j_k}v_{j_k}\leq  t_k \leq r_{j_{k+1}}v_{j_{k+1}}$.
 The solution to the original optimization problem is therefore given by $\boldsymbol w=\boldsymbol v-A^{-1}\boldsymbol w(\tilde t)$, which yields the desired formula.
\end{proof}

\subsection{Reinitialization}\label{subsec:reint}
Being a nonconvex optimization problem (in particular due to the regularization $R_2$ which promotes pure materials),
one cannot avoid running into local minima in which a pure material at a point $x$ is replaced with a convex combination of other materials.
To turn such a solution into one of pure materials we employ the following simple procedure.
At each point $x\in\Omega$ we calculate the effective attenuation coefficient
\begin{equation*}
g_x=\sum_{i=1}^{N_{\text{mat}}}w_i(x)g_i:[E_{\min},E_{\max}]\to[0,\infty)
\end{equation*}
and identify that material $j$ whose attenuation coefficient $g_i$ is closest to $g_x$ in an $L^2$-sense,
$j=\argmin_i\|g_x-g_i\|_{L^2}$.
We then simply reinitialize $w_j(x)=1$ and $w_i(x)=0$ for $i\neq j$.

\subsection{Convergence analysis}
In the following, we abbreviate by $\{a_k\}$ the sequence $a_k$ for $k\in\N$.
Before proving convergence of the algorithm, we show its monotonicity and provide an estimate for a feasible damping parameter.

\begin{prop}[Monotone descent of $F$] \label{thm:monotoneDescent}
Let $\{y^k,\boldsymbol{w}^k\}$ be the sequence produced by \cref{alg:modifiedEM}
for $\varepsilon>0$ and damping parameters $\{\omega_k\}$, and abbreviate
\begin{align*}
G
&=\int_{E_{\min}}^{E_{\max}}I_0(E)\boldsymbol{g}\otimes \boldsymbol{g}\,\mathrm d\mathcal{L}^1
\in\R^{N_{\text{mat}}\times N_{\text{mat}}},\\
A^{\epsilon}_k
&=\int_{\Omega}\sum_{i=1}^{N_{\text{mat}}} \frac{\mathscr{D}^*\left(\int_{E_{\min}}^{E_{\max}} y^{k}g_i\mathrm d\mathcal{L}^1\right) (w_i^{k}-w_i^{k-1})^2}{w_i^{k-1}+\varepsilon }\mathrm{d}\mathcal{L}^n
\in\R,\\
C_k
&=\frac{1}{2}\int_{\Sigma}(\mathscr{D}(\boldsymbol{w}^{k}-\boldsymbol{w}^{k-1}))^T G\mathscr{D}(\boldsymbol{w}^{k}-\boldsymbol{w}^{k-1})\mathrm{d}\mathcal{H}^n
\in\R.
\end{align*}
Further, let $\eta\in(0,1)$ be arbitrary. If in each iteration $k$ we have
\begin{equation*}
\label{eq:3.3.2}
\omega_{k}\leq \frac{A^{\varepsilon}_k}{C_k}(1-\eta),
\end{equation*}
then $F(y^k,\boldsymbol{w}^k)$ is monotonically decreasing in $k$ with
\[
F(y^k,\boldsymbol{w}^k)+\frac\eta{\omega_k}A_k^\varepsilon\leq F(y^k,\boldsymbol{w}^{k-1})\leq F(y^{k-1},\boldsymbol{w}^{k-1}).
\]
\end{prop}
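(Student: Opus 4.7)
The plan is to handle the two updates in \cref{alg:modifiedEM} separately. The $y$-update gives $F(y^k,\boldsymbol w^{k-1})\leq F(y^{k-1},\boldsymbol w^{k-1})$ for free, since by \cref{thm:optimalY} $y^k$ is the global minimizer of the strictly convex $F(\cdot,\boldsymbol w^{k-1})$. The real work is the M-step estimate $F(y^k,\boldsymbol w^k)+\tfrac{\eta}{\omega_k}A_k^\varepsilon\leq F(y^k,\boldsymbol w^{k-1})$. Since the Gaussian summand of $F$ depends on $\boldsymbol w$ only through $y^k$, it cancels in this difference, and I am left with bounding the increment of $H(\boldsymbol w):=\int_\Sigma\int_{E_{\min}}^{E_{\max}}d_{\mathrm{KL}}(y^k,\mathscr I(\boldsymbol w))\,\mathrm d\mathcal L^1\,\mathrm d\mathcal H^n$ together with those of $\alpha R_1$ and $\beta R_2$.

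For the data term I would exploit that $\mathscr I(\boldsymbol w)=I_0\exp(-\sum_i g_i\mathscr Dw_i)$, so that $\log[\mathscr I(\boldsymbol w^{k-1})/\mathscr I(\boldsymbol w^k)]=\sum_i g_i\mathscr D(w_i^k-w_i^{k-1})$ and the increment reduces to $\int_\Sigma\int_{E_{\min}}^{E_{\max}}[y^k\sum_i g_i\mathscr D(w_i^k-w_i^{k-1})+\mathscr I(\boldsymbol w^k)-\mathscr I(\boldsymbol w^{k-1})]\,\mathrm d\mathcal L^1\,\mathrm d\mathcal H^n$. The key estimate is the elementary pointwise inequality $e^{-b}-e^{-a}+(b-a)e^{-a}\leq\tfrac12(b-a)^2$ valid for all $a,b\geq0$, which follows from the Lagrange form of Taylor's remainder combined with $e^{-\xi}\leq1$ on $[0,\infty)$. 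Applied with $a=\sum_i g_i\mathscr Dw_i^{k-1}$, $b=\sum_i g_i\mathscr Dw_i^k$, multiplied by $I_0(E)$ and integrated, it produces exactly the quadratic $C_k$. Passing the remaining linear contribution through the adjoint $\mathscr D^*$ yields $H(\boldsymbol w^k)-H(\boldsymbol w^{k-1})\leq L+C_k$ with $L:=\int_\Omega\sum_i\mathscr D^*\bigl(\int(y^k-\mathscr I(\boldsymbol w^{k-1}))g_i\,\mathrm d\mathcal L^1\bigr)(w_i^k-w_i^{k-1})\,\mathrm d\mathcal L^n$.

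For the regularization, concavity of $R_2$ gives the tangent upper bound $\beta[R_2(\boldsymbol w^k)-R_2(\boldsymbol w^{k-1})]\leq-\beta\int_\Omega\sum_i(w_i^{k-1}-1/N_{\text{mat}})(w_i^k-w_i^{k-1})\,\mathrm d\mathcal L^n$, while strong convexity of the surrogate \eqref{eqn:stepW} (of which $\boldsymbol w^k$ is the minimizer) gives $\alpha[R_1(\boldsymbol w^k)-R_1(\boldsymbol w^{k-1})]\leq\alpha\int_\Omega\sum_i r_i^k(w_i^{k-1}-w_i^k)(w_i^k-v_i^k)\,\mathrm d\mathcal L^n$ upon expanding $(w_i^{k-1}-v_i^k)^2-(w_i^k-v_i^k)^2$. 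The crucial algebraic step is now to combine $L$ with the $R_2$ bound: using the definition of $w_{i,\text{EM}}^\varepsilon$ to write $\mathscr D^*\bigl(\int(y^k-\mathscr I(\boldsymbol w^{k-1}))g_i\bigr)=\mathscr D^*\bigl(\int y^k g_i\bigr)(w_i^{k-1}-w_{i,\text{EM}}^\varepsilon)/(w_i^{k-1}+\varepsilon)$ and matching against the definition of $v_i^k$, one verifies the identity $L+\beta[R_2(\boldsymbol w^k)-R_2(\boldsymbol w^{k-1})]=\alpha\int_\Omega\sum_i r_i^k(w_i^{k-1}-v_i^k)(w_i^k-w_i^{k-1})\,\mathrm d\mathcal L^n$. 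Adding the $\alpha R_1$ bound, the cross-terms telescope to $-\alpha\int_\Omega\sum_i r_i^k(w_i^k-w_i^{k-1})^2\,\mathrm d\mathcal L^n=-A_k^\varepsilon/\omega_k$, so $F(y^k,\boldsymbol w^k)-F(y^k,\boldsymbol w^{k-1})\leq C_k-A_k^\varepsilon/\omega_k$, and the damping condition $\omega_k\leq(1-\eta)A_k^\varepsilon/C_k$ rearranges directly to $C_k-A_k^\varepsilon/\omega_k\leq-\eta A_k^\varepsilon/\omega_k$, as required. The main obstacle is this algebraic telescoping — no single step is deep, but the weights $r_i^k$ and shifts $v_i^k$ in \eqref{eqn:stepW} were engineered precisely so that the Euler--Lagrange equation of the surrogate coincides with \eqref{eqn:oc}, and exploiting this correspondence consistently is what makes the monotone descent argument close.
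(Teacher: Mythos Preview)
Your argument is correct and follows essentially the same strategy as the paper: both obtain the pivotal inequality $F(y^k,\boldsymbol w^k)-F(y^k,\boldsymbol w^{k-1})+A_k^\varepsilon/\omega_k\leq C_k$ by bounding the KL increment via a second-order Taylor estimate (your pointwise inequality $e^{-b}-e^{-a}+(b-a)e^{-a}\leq\tfrac12(b-a)^2$ is exactly the paper's Hessian bound $\exp(-\boldsymbol g\cdot\mathscr D\boldsymbol z)\leq1$), using convexity of $R_1$ and concavity of $R_2$, and then invoking the damping condition. The only organizational difference is that the paper multiplies the rewritten optimality condition \eqref{eqn:ocSimplified} by $w_i^k-w_i^{k-1}$ and uses the subgradient inequality $\langle p_i^k,w_i^k-w_i^{k-1}\rangle\geq R_1(\boldsymbol w^k)-R_1(\boldsymbol w^{k-1})$, whereas you go through strong convexity of the surrogate \eqref{eqn:stepW}; since $p_i^k=-r_i^k(w_i^k-v_i^k)$, these yield the identical bound.

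One small slip in your write-up: what you call the ``identity'' $L+\beta[R_2(\boldsymbol w^k)-R_2(\boldsymbol w^{k-1})]=\alpha\int_\Omega\sum_i r_i^k(w_i^{k-1}-v_i^k)(w_i^k-w_i^{k-1})\,\mathrm d\mathcal L^n$ is in fact an inequality $\leq$, because you have already replaced $R_2(\boldsymbol w^k)-R_2(\boldsymbol w^{k-1})$ by its concave tangent upper bound. The genuine identity holds between $L-\beta\int_\Omega\sum_i(w_i^{k-1}-1/N_{\text{mat}})(w_i^k-w_i^{k-1})\,\mathrm d\mathcal L^n$ and the right-hand side, which is precisely what your algebra with $w_{i,\text{EM}}^\varepsilon$ and $v_i^k$ verifies and is all that the telescoping step needs.
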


Before proving the result, let us state a few properties of the involved quantities.

\begin{lemma}[Positivity of $G$]
If the attenuation coefficients $g_i:[E_{\min},E_{\max}]\to[0,\infty)$ are linearly independent (as Lebesgue functions) and $I_0$ is positive,
the matrix $G$ from \cref{thm:monotoneDescent} is positive definite.
\end{lemma}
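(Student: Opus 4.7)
The plan is direct: show that $G$ is manifestly positive semi-definite by rewriting the quadratic form as an integral of a square, and then use linear independence of the $g_i$ together with positivity of $I_0$ to rule out the degenerate case.

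Concretely, I would take an arbitrary vector $\boldsymbol v=(v_1,\dots,v_{N_{\text{mat}}})\in\R^{N_{\text{mat}}}$ and compute
\begin{equation*}
\boldsymbol v^T G\boldsymbol v
=\int_{E_{\min}}^{E_{\max}}I_0(E)\,\boldsymbol v^T(\boldsymbol g(E)\otimes\boldsymbol g(E))\boldsymbol v\,\mathrm d\mathcal L^1(E)
=\int_{E_{\min}}^{E_{\max}}I_0(E)\left(\sum_{i=1}^{N_{\text{mat}}}v_ig_i(E)\right)^{\!2}\mathrm d\mathcal L^1(E).
\end{equation*}
Since the integrand is non-negative, we get $\boldsymbol v^T G\boldsymbol v\geq 0$, which establishes positive semi-definiteness.

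To upgrade to positive definiteness, suppose $\boldsymbol v^T G\boldsymbol v=0$. Then the integrand vanishes $\mathcal L^1$-almost everywhere on $(E_{\min},E_{\max})$. Since $I_0>0$ (almost everywhere will suffice if one prefers that reading of the positivity hypothesis), this forces $\sum_{i=1}^{N_{\text{mat}}}v_ig_i(E)=0$ for $\mathcal L^1$-a.e.\ $E$. By assumption the Lebesgue functions $g_1,\ldots,g_{N_{\text{mat}}}$ are linearly independent, so $\boldsymbol v=0$. Hence $G$ is positive definite.

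There is really no obstacle: the argument is a routine Gram-matrix style computation. The only subtlety worth mentioning is the precise sense in which "linearly independent" is meant (pointwise vs.\ almost everywhere), but since the claim only asks for linear independence as Lebesgue functions, the implication ``$\sum v_i g_i=0$ a.e.\ $\Rightarrow$ $\boldsymbol v=0$'' is exactly what is assumed, and the proof is complete.
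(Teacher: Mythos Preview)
Your proof is correct and follows exactly the same Gram-matrix argument as the paper: write the quadratic form as $\int I_0 (\sum v_i g_i)^2$, observe non-negativity, and use positivity of $I_0$ plus linear independence of the $g_i$ to force $\boldsymbol v=0$ whenever the form vanishes. Your write-up is in fact more explicit than the paper's, which simply asserts semi-definiteness ``by definition'' and jumps straight to the linear-independence contradiction.
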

\begin{proof}
By definition $G$ is positive semi-definite.
Now assume $\boldsymbol w^TG\boldsymbol w=0$ for some $\boldsymbol w\in\R^{N_{\text{mat}}}$,
then this implies $\sum_{i=1}^{N_{\text{mat}}}w_ig_i=0$ almost everywhere, contradicting the linear independence.
\end{proof}

Linear independence of the $g_i$ in essence means that the materials could be reconstructed if the X-ray attenuation were measured separately for all photon energies.
The resulting positive definiteness of $G$ then implies that $C_k>0$ unless the method has converged, i.\,e.\ $\boldsymbol w^{k-1}=\boldsymbol w^k$;
thus the expression $\frac{A_k^\varepsilon}{C_k}$ makes sense.

\begin{lemma}[Positivity of $y^k$]\label{thm:positivityY}
If the attenuation coefficients $g_i$ are uniformly bounded and $I_0$ positive, then $y^k$ is strictly positive.
If in addition the measurement $f$ is bounded below and $I_0$ is bounded away from zero,
then also $y^k$ is bounded away from zero by a constant depending on $f$, $I_0$, and $\boldsymbol g$.
\end{lemma}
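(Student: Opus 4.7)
The plan is to exploit the explicit formula \eqref{eqn:stepY}, which via the identities of \cref{thm:optimalY} can be rewritten as
$y^k = \mathscr I(\boldsymbol w^{k-1})\exp\!\big(\tfrac{f-Y_{\boldsymbol w^{k-1}}}{\sigma^2}\big) = \mathscr I(\boldsymbol w^{k-1})\,Y_{\boldsymbol w^{k-1}}/\mathscr F(\boldsymbol w^{k-1})$.
The ingredients to exploit are that $W_0$ is strictly positive and strictly increasing on $(0,\infty)$, that $\exp>0$, and that the simplex indicator $\iota_\Delta$ inside $R_1$ guarantees $\boldsymbol w^{k-1}(\boldsymbol x)\in\Delta$ almost everywhere (assuming a feasible initialization), so in particular every $w_i^{k-1}$ takes values in $[0,1]$.

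For the first claim (strict positivity of $y^k$), I would argue pointwise. Boundedness of the $g_i$ together with $w_i^{k-1}\in[0,1]$ and $\mathscr D w_i^{k-1}(\boldsymbol x,\boldsymbol\theta)\le\mathcal H^1(L^+_{\boldsymbol x,\boldsymbol\theta}\cap\Omega)\le\mathrm{diam}(\Omega)$ make the exponent $\sum_i g_i(E)\mathscr D w_i^{k-1}$ finite, so $\mathscr I(\boldsymbol w^{k-1})(E,\cdot,\cdot)=I_0(E)\exp(-\cdots)>0$ wherever $I_0(E)>0$, and integration in $E$ then gives $\mathscr F(\boldsymbol w^{k-1})>0$. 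Hence the argument of $W_0$ in the closed-form expression for $Y_{\boldsymbol w^{k-1}}$ is strictly positive, so $W_0(\cdots)>0$ and therefore $y^k=\mathscr I\,Y/\mathscr F>0$ almost everywhere.

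For the quantitative lower bound, I would introduce the essential infima $I_{\min}$ of $I_0$ and $f_{\min}$ of $f$, as well as $G_{\max}:=\max_i\|g_i\|_\infty$ and $d:=\mathrm{diam}(\Omega)$. Then $\mathscr I(\boldsymbol w^{k-1})\geq I_{\min}\exp(-N_{\text{mat}}G_{\max}d)$ pointwise, $\mathscr F(\boldsymbol w^{k-1})\geq(E_{\max}-E_{\min})I_{\min}\exp(-N_{\text{mat}}G_{\max}d)$, and $\mathscr F(\boldsymbol w^{k-1})\leq\int_{E_{\min}}^{E_{\max}}I_0\,\mathrm d\mathcal L^1<\infty$ (using only integrability of $I_0$, not an upper bound). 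Combining these with $f\geq f_{\min}$ produces a constant $c>0$ depending only on these quantities such that the argument of $W_0$ in $Y_{\boldsymbol w^{k-1}}=\sigma^2 W_0(\mathscr F\exp(f/\sigma^2)/\sigma^2)$ is at least $c$; monotonicity of $W_0$ yields $Y_{\boldsymbol w^{k-1}}\geq\sigma^2 W_0(c)>0$, and substitution into $y^k=\mathscr I\,Y/\mathscr F$ gives the desired strictly positive lower bound, with explicit dependence on $f_{\min}$, $I_0$, and $\boldsymbol g$ (and implicit dependence on $\Omega$, $\sigma$, and $N_{\text{mat}}$).

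There is no real obstacle beyond careful bookkeeping. The only subtlety worth flagging is that $\boldsymbol w^{k-1}$ is guaranteed to lie in the simplex because the subproblem \eqref{eqn:stepW} defining each iterate $\boldsymbol w^k$ contains $\iota_\Delta$ through $R_1$; provided the initialization $\boldsymbol w^0$ is itself feasible, all subsequent iterates are too, and the bounds above then hold uniformly in $k$.
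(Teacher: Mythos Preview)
Your proposal is correct and follows essentially the same approach as the paper: the paper's proof is a single sentence stating that the result is ``a straightforward consequence of the definition of $y^k$ via \eqref{eqn:stepY} and the positivity of $\mathscr{I}(\boldsymbol w^{k-1})$ as well as $\mathscr{F}(\boldsymbol w^{k-1})$,'' and you have simply spelled out that straightforward consequence in detail, including the explicit constants for the quantitative lower bound.
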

\begin{proof}
This is a straightforward consequence of the definition of $y^k$ via \eqref{eqn:stepY}
and the positivity of $\mathscr{I}(\boldsymbol w^{k-1})$ as well as $\mathscr{F}(\boldsymbol w^{k-1})$.
\end{proof}

If $y^k$ is strictly positive, then so is $A_k^\varepsilon$ (again unless the method has already converged) so that a feasible choice of the damping parameter $\omega_k$ exists.

\begin{lemma}[Lower bound on damping]\label{thm:dampingBound}
On $L^2(\Omega;\R^{N_{\text{mat}}})$ and for positive $y^k\in L^\infty((E_{\min},E_{\max})\times\Sigma)$ consider the symmetric quadratic form
\begin{equation*}
Q_{y^k}(\boldsymbol\psi,\boldsymbol\psi)=
\frac{1}{2}\int_{\Sigma}\left(\mathscr{D}\left(\psi_i/s_i^k\right)\right)^T G\mathscr{D}\left(\psi_i/s_i^k\right)\,\mathrm{d}\mathcal{H}^n
\quad\text{with }
s_i^k=\sqrt{\mathscr{D}^*\left(\int_{E_{\min}}^{E_{\max}} y^{k}g_i\mathrm d\mathcal{L}^1\right)},
\end{equation*}
then $\frac{A_k^\varepsilon}{C_k}\geq\frac1{1+\varepsilon}\frac1{\lambda_{y^k}}$
for $\lambda_{y^k}=\sup_{\|\boldsymbol\psi\|_{L^2}\leq1}Q_{y^k}(\boldsymbol\psi,\boldsymbol\psi)$ the norm or maximum eigenvalue of the operator associated with $Q_{y^k}$.
\end{lemma}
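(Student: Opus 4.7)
The plan is to introduce the change of variables $\psi_i=s_i^k(w_i^k-w_i^{k-1})$, which directly links the three quantities $A_k^\varepsilon$, $C_k$, and the quadratic form $Q_{y^k}$. Then $C_k$ becomes exactly $Q_{y^k}(\boldsymbol\psi,\boldsymbol\psi)$, because $\psi_i/s_i^k=w_i^k-w_i^{k-1}$ by construction. From this identity and the definition of $\lambda_{y^k}$ as the operator norm of the quadratic form,
\[
C_k=Q_{y^k}(\boldsymbol\psi,\boldsymbol\psi)\leq\lambda_{y^k}\|\boldsymbol\psi\|_{L^2}^2.
\]

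Next I would estimate $A_k^\varepsilon$ from below in terms of $\|\boldsymbol\psi\|_{L^2}^2$. Substituting $(s_i^k)^2=\mathscr D^*(\int_{E_{\min}}^{E_{\max}}y^k g_i\,\mathrm d\mathcal L^1)$ gives
\[
A_k^\varepsilon=\int_\Omega\sum_{i=1}^{N_{\text{mat}}}\frac{\psi_i^2}{w_i^{k-1}+\varepsilon}\,\mathrm d\mathcal L^n.
\]
Since every finite-energy iterate $\boldsymbol w^{k-1}$ lies pointwise in the simplex $\Delta$ (otherwise $R^{\alpha,\beta}$ is infinite), we have $w_i^{k-1}\in[0,1]$ and therefore $w_i^{k-1}+\varepsilon\leq 1+\varepsilon$ almost everywhere, so
\[
A_k^\varepsilon\geq\frac1{1+\varepsilon}\int_\Omega\sum_{i=1}^{N_{\text{mat}}}\psi_i^2\,\mathrm d\mathcal L^n=\frac1{1+\varepsilon}\|\boldsymbol\psi\|_{L^2}^2.
\]

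Combining the two estimates immediately yields the claim:
\[
\frac{A_k^\varepsilon}{C_k}\geq\frac{\frac1{1+\varepsilon}\|\boldsymbol\psi\|_{L^2}^2}{\lambda_{y^k}\|\boldsymbol\psi\|_{L^2}^2}=\frac1{1+\varepsilon}\frac1{\lambda_{y^k}},
\]
with the trivial case $\boldsymbol w^k=\boldsymbol w^{k-1}$ (where both $A_k^\varepsilon$ and $C_k$ vanish) handled separately or by interpreting the ratio as $+\infty$.

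There is essentially no obstacle here: the only thing to verify is that $\boldsymbol\psi\in L^2$, which follows from $y^k\in L^\infty$ (hence $s_i^k$ is bounded) together with $\boldsymbol w^k,\boldsymbol w^{k-1}\in[0,1]^{N_{\text{mat}}}$ on a bounded domain, so that the substitution is legitimate and $Q_{y^k}$ is being evaluated on an admissible element.
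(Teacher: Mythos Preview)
Your proof is correct and follows exactly the same route as the paper: the same substitution $\psi_i=s_i^k(w_i^k-w_i^{k-1})$, the same lower bound on $A_k^\varepsilon$ via $w_i^{k-1}+\varepsilon\leq 1+\varepsilon$, and the identification of $C_k$ with $Q_{y^k}(\boldsymbol\psi,\boldsymbol\psi)$. You have simply spelled out more of the details (the simplex constraint, the $L^2$-membership of $\boldsymbol\psi$, the degenerate case) than the paper's one-line proof does.
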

\begin{proof}
Abbreviating $\psi_i=s_i^k(w_i^k-w_i^{k-1})$ we have
\begin{equation*}
\frac{A_k^\varepsilon}{C_k}
\geq\frac{\int_{\Omega}\sum_{i=1}^{N_{\text{mat}}} \frac{\mathscr{D}^*\left(\int_{E_{\min}}^{E_{\max}} y^{k}g_i\mathrm d\mathcal{L}^1\right) (w_i^{k}-w_i^{k-1})^2}{1+\varepsilon}\,\mathrm{d}\mathcal{L}^n}{\frac{1}{2}\int_{\Sigma}(\mathscr{D}(\boldsymbol{w}^{k}-\boldsymbol{w}^{k-1}))^T G\mathscr{D}(\boldsymbol{w}^{k}-\boldsymbol{w}^{k-1})\,\mathrm{d}\mathcal{H}^n}
=\frac1{1+\varepsilon}\frac{\|\boldsymbol\psi\|_{L^2}^2}{Q_{y^k}(\boldsymbol\psi,\boldsymbol\psi)}
\qedhere
\end{equation*}
\end{proof}

\begin{remark}[Choice of damping parameter]\label{rem:damping}
By \cref{thm:positivityY,thm:dampingBound} the damping parameter $\omega_k$ can be chosen strictly positive.
Moreover, replacing $y^k$ in \cref{thm:dampingBound} with its lower bound $\underline y$ from \cref{thm:positivityY},
we obtain a feasible $\omega_{k}=\frac1{1+\varepsilon}\frac{1-\eta}{\lambda_{\underline y}}>0$ which can be computed beforehand.
In our numerical computations we employed this feasible choice, and its value was typically around $0.1$.
\end{remark}

Finally, before we prove \cref{thm:monotoneDescent} let us rewrite the optimality condition \eqref{eqn:oc}.
By inserting the definition of $w^{\varepsilon}_{i,\text{EM}}(\boldsymbol{w}^{k-1}))$
and multiplying with $\mathscr{D}^*\left(\int_{E_{\min}}^{E_{\max}}y^kg_i\,\mathrm d\mathcal{L}^1\right)/(w^{k-1}_i+\varepsilon)$
it turns into
\begin{equation}\label{eqn:ocSimplified}
0=\frac{\mathscr{D}^*\left(\int_{E_{\min}}^{E_{\max}}y^kg_i\,\mathrm d\mathcal{L}^1\right)(w_i^{k}-w_i^{k-1})}{\omega_k(w^{k-1}_i+\varepsilon)}
-\mathscr D^*\left(\int_{E_{\min}}^{E_{\max}}\mathscr I(\boldsymbol w^{k-1})g_i-y^kg_i\,\mathrm d\mathcal{L}^1\right)
+\alpha p_i^k+\beta R'_2(\boldsymbol{w}^{k-1})_i
\end{equation}
for $i=1,\ldots,N_{\text{mat}}$,
where $R'_2$ denotes the Fr\'echet derivative of $R_2$ and $(p^k_1,\ldots,p^k_{N_{\text{mat}}})$ the element of the subdifferential $\partial R_1(\boldsymbol w^k)$
such that the optimality condition is satisfied.

\begin{proof}[Proof of \cref{thm:monotoneDescent}]
By definition of $y^k=\argmin F(\cdot,\boldsymbol w^{k-1})$ we have $F(y^k,\boldsymbol w^{k-1})\leq F(y^{k-1},\boldsymbol w^{k-1})$,
so it remains to prove $F(y^k,\boldsymbol w^{k})\leq F(y^{k},\boldsymbol w^{k-1})$.
Multiplying \eqref{eqn:ocSimplified} with $w_i^{k}-w_i^{k-1}$,
summing over $i$ and integrating we obtain
\begin{multline*}
0
=\frac{A_k^\varepsilon}{\omega_k}
-\int_{\Omega}\sum_{i=1}^{N_{\text{mat}}}\mathscr D^*\left(\int_{E_{\min}}^{E_{\max}}\mathscr I(\boldsymbol w^{k-1})g_i-y^kg_i\,\mathrm d\mathcal{L}^1\right)(w_i^{k}-w_i^{k-1})\,\mathrm{d}\mathcal{L}^n\\
+\alpha\sum_{i=1}^{N_{\text{mat}}}\left\langle\   p^k_i,w_i^{k}-w_i^{k-1}\right\rangle
+\beta\left\langle R'_2(\boldsymbol{w}^{k-1}),\boldsymbol w^{k}-\boldsymbol w^{k-1}\right\rangle.
\end{multline*}
By definition of the subgradient and by the convexity of $-R_2$ we have
\begin{align*}
\sum_{i=1}^{N_{\text{mat}}}\left\langle p^k_i,w_i^{k}-w_i^{k-1}\right\rangle
&\geq R_1(\boldsymbol{w}^{k})-R_1(\boldsymbol{w}^{k-1}),\\
\left\langle R'_2(\boldsymbol{w}^{k-1})_i,\boldsymbol w^{k}-\boldsymbol w^{k-1}\right\rangle
&\geq R_2(\boldsymbol{w}^{k})-R_2(\boldsymbol{w}^{k-1}).
\end{align*}
so that the previous equality can be turned into the inequality
\begin{align*}
\alpha R_1(\boldsymbol{w}^{k})+\beta& R_2(\boldsymbol{w}^{k})- \alpha R_1(\boldsymbol{w}^{k-1})-\beta R_2(\boldsymbol{w}^{k-1})
+\frac{A_k^\varepsilon}{\omega_{k}}\\
&\leq\int_{\Sigma}\sum_{i=1}^{N_{\text{mat}}}\left(\int_{E_{\min}}^{E_{\max}}\mathscr I(\boldsymbol w^{k-1})g_i-y^kg_i\,\mathrm d\mathcal{L}^1\right)\mathscr D(w_i^{k}-w_i^{k-1})\,\mathrm{d}\mathcal{H}^n.
\end{align*}
Abbreviating the Kullback--Leibler fidelity term as
\begin{equation*}
L(\boldsymbol{w})=\int_{\Sigma}\int_{E_{\min}}^{E_{\max}}d_{KL}(y^{k},\mathscr{I}(\boldsymbol{w}))\,\mathrm{d}\mathcal{L}^1\,\mathrm{d}\mathcal{H}^n,
\end{equation*}
we now add the difference $L(\boldsymbol{w}^k)-L(\boldsymbol{w}^{k-1})$ on both sides in order to achieve a term $F(y^k,\boldsymbol w^k)-F(y^k,\boldsymbol w^{k-1})$,
\begin{align*}
&F(y^k,\boldsymbol{w}^{k})-F(y^k,\boldsymbol{w}^{k-1})+\frac{A^\varepsilon_k}{\omega_k}\\
&\leq-\sum_{i=1}^{N_{\text{mat}}}\int_{\Sigma}\bigg[\int_{E_{\min}}^{E_{\max}}y^{k}g_i - \mathscr I(\boldsymbol w^{k-1})g_i\,\mathrm{d}\mathcal{L}^1\bigg]\mathscr D(w_i^{k}-w_i^{k-1})\,\mathrm{d}\mathcal{H}^n
+L(\boldsymbol{w}^k)-L(\boldsymbol{w}^{k-1})\\
&=L(\boldsymbol{w}^k)-L(\boldsymbol{w}^{k-1})-L'(\boldsymbol{w}^{k-1})(\boldsymbol{w}^{k}-\boldsymbol{w}^{k-1}),
\end{align*}
where $L'$ denotes the Fr\'echet derivative of $L$.
Using Taylor's theorem, the right-hand side of the inequality can be expressed as a second directional derivative of $L$,
\begin{multline*}
L(\boldsymbol{w}^k)-L(\boldsymbol{w}^{k-1})-L'(\boldsymbol{w}^{k-1})(\boldsymbol{w}^{k}-\boldsymbol{w}^{k-1})
=\frac12\frac{\mathrm{d}^2}{\mathrm{d}t^2}L(\boldsymbol{w}^{k-1}+t(\boldsymbol{w}^{k}-\boldsymbol{w}^{k-1}))|_{t=\tau}\\
=\frac12\int_{\Sigma}\sum_{i,k=1}^{N_{\text{mat}}}\int_{E_{\min}}^{E_{\max}}I_0\exp\left(-\boldsymbol{g}\cdot\mathscr{D}\boldsymbol{z}\right)\mathscr{D}\phi_i g_i \mathscr{D}\phi_kg_k\,\mathrm{d}\mathcal{L}^1\,\mathrm{d}\mathcal{H}^n
\leq\frac12\int_{\Sigma}(\mathscr{D}\boldsymbol{\phi})^TG\mathscr{D}\boldsymbol{\phi}\,\mathrm{d}\mathcal{H}^n,
\end{multline*}
where we abbreviated $\boldsymbol\phi=(\boldsymbol{w}^{k}-\boldsymbol{w}^{k-1})$ and $\boldsymbol z=\boldsymbol{w}^{k-1}+\tau(\boldsymbol{w}^{k}-\boldsymbol{w}^{k-1})$ for some $\tau\in[0,1]$.
Summarizing, we arrive at
\begin{equation*}
\label{eq:3.4.1n}
F(y^k,\boldsymbol{w}^{k})-F(y^k,\boldsymbol{w}^{k-1})+\frac{A_k^{\varepsilon}}{\omega_{k}}  \leq C_k
\end{equation*}
or equivalently
\begin{equation*}
\label{eq:3.4.2n}
F(y^k,\boldsymbol{w}^{k})+\frac{\eta}{\omega_{k}} A_k^{\varepsilon} +\frac{1-\eta}{\omega_{k}}A_k^{\varepsilon} \leq C_k +F(y^k,\boldsymbol{w}^{k-1}),
\end{equation*}
which implies $F(y^k,\boldsymbol{w}^{k})\leq F(y^k,\boldsymbol{w}^{k-1})$ if $\frac{1-\eta}{\omega_{k}}A_k^{\varepsilon} \geq C_k$.
\end{proof}

Next we prove convergence of \cref{alg:modifiedEM} to a critical point of the functional $F$.
We follow essentially the same steps as in \cite{brune2010,sawatzky2011,Gpap2020}.
% For convergence we will use instead of sequences, nets since the $\mathrm{BV}$ space is not separable space and so for the sub-gradients $p\in \mathrm{BV}^*(\Omega)$ that are uniformly bounded in $\mathrm{BV}^*$ norm we can not conclude that there exist a subsequence that converges weakly$^*$ in $\mathrm{BV}^*$. Before starting the proof we will define the notion of subnet. 
% 
% \begin{definition}
% 	Let $(A,\leq)$ and $(B,\leq)$ are directed sets and let $\{u_{\alpha}\}_{\alpha\in A}$ and $\{v_{\beta}\}_{\beta\in B}$ to be nets. Then $\{v_{\beta}\}$ is a subnet of $\{u_{\alpha}\}$ if there exist a monotone and final function $h:B\to A $ such that 
% 	\[
% 	    v_{\beta} = u_{h(\beta)} \ \ \forall \beta \in B.
% 	\]
% 	
% \end{definition} 
% 
% For the purpose of the proof we first define the directed sets as follows
% \[
% A = \{\alpha_0,\alpha_{1},\ldots\} \ \ \text{and } B = \{\beta_0,\beta_{1},\ldots\}.
% \]
% and we have $\alpha_{k-1}\leq \alpha_k$ and $\beta_{k-1}\leq \beta_k$ respectively for $k=1,2,\ldots$. Now we denote
%  as $\{w^{\alpha_k}_{1},\ldots,w^{\alpha_k}_{N_{\text{mat}}}\}_{\alpha_k\in A}$ the net of primal iterates and as $\{v^{\beta_k}_{1},\ldots,v^{\beta_k}_{N_{\text{mat}}}\}_{\beta_k\in B}$ the corresponding sub-net with $v^{\beta_k}_i = w^{h(\beta_k)}_i$, $i=1,2,3,\ldots N_{\text{mat}}$. Similarly for the net $\{y^{\alpha_k}\}_{\alpha_k\in A}$ we will have the subnet $s^{\beta_k} = y^{h(\beta_k)}$. For the dual net $p_i^{\alpha}$ the subnet will be denoted as $q_i^{\beta_k} = p_i^{h(\beta_k)}$, $i=1,2,3,\ldots N_{\text{mat}}$. 

\begin{theorem}[Convergence of damped alternating minimization]\label{Thrm: Convergence}
Assume $I_0$ to be integrable and $\boldsymbol g$ and $f$ to be bounded.
Also assume $I_0$ and $\boldsymbol g$ to be bounded away from zero.
Let $\{y^k,\boldsymbol{w}^k\}$ be the sequence produced by \cref{alg:modifiedEM}
for $\varepsilon>0$ and damping parameters $\{\omega_k\}$ satisfying the condition from \cref{thm:monotoneDescent} and being uniformly bounded away from zero
(which is possible by \cref{rem:damping}).
Then every subsequence of $\{y^k,\boldsymbol{w}^k\}$ contains a subsequence
converging strongly in $L^1((E_{\min},E_{\max})\times\Sigma)\times L^q(\Omega)^{N_{\text{mat}}}$ for any $q\in[1,\infty)$,
and every limit point of the sequence is a critical point of $F$.
\end{theorem}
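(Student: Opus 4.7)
The plan is the classical three-step convergence argument for monotone descent methods: first, extract a subsequential limit from any subsequence using BV compactness; second, use summability of $A_k^\varepsilon$ implied by the descent estimate to show that successive iterates become arbitrarily close; third, pass to the limit in the one-step optimality condition via a variational-inequality formulation of the subdifferential.

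For the first step, since $F$ is bounded below (as observed in the existence proof, $F\geq-\beta\mathcal{L}^n(\Omega)N_{\text{mat}}(N_{\text{mat}}-1)$) and monotonically decreasing along the iterates by \cref{thm:monotoneDescent}, the total variation $\alpha\mathrm{TV}(w_i^k)$ is uniformly bounded, so together with the simplex constraint $w_i^k\in[0,1]$ we obtain a uniform bound in $\mathrm{BV}(\Omega)^{N_{\text{mat}}}$. Given any subsequence, the compact embedding $\mathrm{BV}\hookrightarrow L^1$ followed by dominated convergence extracts a further (non-relabeled) subsequence with $\boldsymbol w^k\to\overline{\boldsymbol w}$ pointwise a.e.\ and in $L^q(\Omega)^{N_{\text{mat}}}$ for every $q\in[1,\infty)$. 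Applying \cref{lem:L1Continuity of Cpoly} to both $\boldsymbol w^k$ and the shifted sequence and diagonalizing, we may also assume $\mathscr I(\boldsymbol w^{k-1})\to\mathscr I(\overline{\boldsymbol w})$ and $\mathscr F(\boldsymbol w^{k-1})\to\mathscr F(\overline{\boldsymbol w})$ both in $L^1$ and pointwise a.e. The Lambert-W formula from \cref{thm:optimalY} combined with continuity of $W_0$ then yields pointwise a.e.\ convergence $y^k\to\bar y:=y_{\overline{\boldsymbol w}}$. Since $\exp(-W_0(z))=W_0(z)/z\in[0,1]$ for $z\geq 0$, we have the integrable dominating envelope $y^k\leq I_0(E)\exp(\|f\|_\infty/\sigma^2)$, and dominated convergence upgrades the a.e.\ convergence of $y^k$ to strong convergence in $L^1((E_{\min},E_{\max})\times\Sigma)$.

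For the second step, the descent estimate $F(y^k,\boldsymbol w^k)+\tfrac{\eta}{\omega_k}A_k^\varepsilon\leq F(y^{k-1},\boldsymbol w^{k-1})$ combined with the uniform positive lower bound on $\omega_k$ from \cref{rem:damping} telescopes to $\sum_k A_k^\varepsilon<\infty$, so $A_k^\varepsilon\to 0$. The assumptions that $\boldsymbol g$ and $I_0$ are bounded away from zero, together with the uniform positive lower bound on $y^k$ from \cref{thm:positivityY} and the uniform positive lower bound $\mathscr D^*\mathbf 1_\Sigma\geq c_0>0$ on $\Omega$ (which follows from $\mathrm{dist}(\Omega,\boldsymbol\xi)>0$ and the explicit formula in \cref{assm: PropofDD*}), yield $\mathscr D^*(\int y^k g_i\,\mathrm d\mathcal L^1)\geq c>0$ uniformly on $\Omega$. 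Since also $w_i^{k-1}+\varepsilon\leq 1+\varepsilon$, the definition of $A_k^\varepsilon$ forces $\|\boldsymbol w^k-\boldsymbol w^{k-1}\|_{L^2(\Omega)^{N_{\text{mat}}}}\to 0$. Hence $\boldsymbol w^{k-1}$ shares the limit $\overline{\boldsymbol w}$ with $\boldsymbol w^k$, and the first summand in \eqref{eqn:ocSimplified} tends to zero strongly.

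For the third step, rewrite the inclusion $p_i^k\in\partial R_1(\boldsymbol w^k)_i$ as the convex variational inequality
\[
R_1(\boldsymbol\phi)\geq R_1(\boldsymbol w^k)+\sum_{i=1}^{N_{\text{mat}}}\langle p_i^k,\phi_i-w_i^k\rangle\qquad\text{for every admissible }\boldsymbol\phi,
\]
substitute for $\alpha p_i^k$ from the rearranged \eqref{eqn:ocSimplified}, and pass to the limit. Using the strong $L^q$ convergence of $\boldsymbol w^k$ and $\boldsymbol w^{k-1}$, strong $L^1$ convergence of $y^k$ and $\mathscr I(\boldsymbol w^{k-1})$, continuity of $\mathscr D$ and $\mathscr D^*$, smoothness of $R_2$, and lower semi-continuity of $R_1$ under $L^1$ convergence, the limit of the variational inequality reads precisely $\beta(\overline w_i-1/N_{\text{mat}})-\mathscr D^*\!\int_{E_{\min}}^{E_{\max}}(\bar y-\mathscr I(\overline{\boldsymbol w}))g_i\,\mathrm d\mathcal L^1\in\alpha\partial R_1(\overline{\boldsymbol w})_i$, which is exactly the first-order optimality condition in $\boldsymbol w$ for $F(\bar y,\cdot)$ at $\overline{\boldsymbol w}$. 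Combined with $\bar y=y_{\overline{\boldsymbol w}}$ (the optimality in $y$ from \cref{thm:optimalY}), this shows $(\bar y,\overline{\boldsymbol w})$ is a critical point of $F$. The main obstacle is the pass-to-limit in the subdifferential term $\alpha p_i^k$, handled here via the variational-inequality formulation and the lower semi-continuity of $R_1$ along the BV-compact subsequence; a secondary subtlety is the uniform lower bound on the weight in $A_k^\varepsilon$, which rests essentially on the strict positivity of $y^k$ from \cref{thm:positivityY} and on the CT geometry.
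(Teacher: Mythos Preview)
Your proof is correct and follows essentially the same three-step strategy as the paper (BV compactness for a subsequential limit, telescoping the descent estimate to get $\sum_k A_k^\varepsilon<\infty$ and hence $\|\boldsymbol w^k-\boldsymbol w^{k-1}\|_{L^2}\to 0$, then passing to the limit in the subgradient inequality for $R_1$). One minor organizational note: your Step~1 already asserts $\mathscr I(\boldsymbol w^{k-1})\to\mathscr I(\overline{\boldsymbol w})$ before Step~2 establishes that the shifted sequence shares the limit $\overline{\boldsymbol w}$---the paper orders this more cleanly by first proving $\boldsymbol w^{k_n+K}\to\boldsymbol w$ for every fixed shift $K$ (via $A_k^\varepsilon\to 0$) and only then deducing convergence of $y^{k_n}$.
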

\begin{proof}
We proceed in three steps.
\begin{enumerate}
\item
From coercivity of $F$ and \cref{thm:monotoneDescent} we derive convergence of $\boldsymbol w^k$ and $y^k$ along a subsequence.
\item
We exploit the optimality condition to derive convergence of the dual variables $p_i^k$.
\item
We show that the limit satisfies the optimality conditions for a critical point of $F$.
\end{enumerate}

\noindent\emph{Step 1 -- convergence of the primal iterates.}
\Cref{thm:monotoneDescent} implies
\begin{equation*}
\sum_{i=1}^{N_{\text{mat}}}\mathrm{TV}(w_i^k)-\frac12\mathcal{L}^n(\Omega)(1-1/N_{\text{mat}})^2
\leq F(y^k,\boldsymbol w^k)
\leq F(y^0,\boldsymbol w^0)
<\infty
\end{equation*}
so that $\boldsymbol w^k$ is uniformly bounded in $[L^\infty(\Omega)\cap\mathrm{BV}(\Omega)]^{N_{\text{mat}}}$.
Thus, any subsequence of $\{\boldsymbol w^k\}$ contains another subsequence $\{\boldsymbol w^{k_n}\}$
converging weakly-* in $[L^\infty(\Omega)\cap\mathrm{BV}(\Omega)]^{N_{\text{mat}}}$ to some $\boldsymbol w$.
By the compact embedding $\mathrm{BV}(\Omega)\hookrightarrow L^1(\Omega)$ and H\"older's inequality
this implies $\boldsymbol w^{k_n}\to\boldsymbol w$ strongly in $L^q(\Omega)^{N_{\text{mat}}}$ as $n\to\infty$ for any $q\in[1,\infty)$.
In fact, we even have $\boldsymbol w^{k_n+K}\to\boldsymbol w$ for any fixed integer $K$.
Indeed, \cref{thm:monotoneDescent} yields
\begin{equation*}
F(y^k,\boldsymbol w^k)
\leq F(y^{k-1},\boldsymbol w^{k-1})-\eta A_k^\varepsilon
\leq\ldots
\leq F(y^0,\boldsymbol w^0)-\eta\sum_{i=1}^kA_i^\varepsilon
\end{equation*}
so that $\sum_{i=1}^\infty A_i^\varepsilon$ is bounded and thus $A_k^\varepsilon\to0$ as $k\to\infty$.
Now \cref{thm:positivityY} implies $\mathscr{D}^*\left(\int_{E_{\min}}^{E_{\max}} y^{k}g_i\mathrm d\mathcal{L}^1\right)/(w_i^{k-1}+\varepsilon)>c$
for some positive constant $c$ independent of $k$ so that
\begin{equation*}
\|\boldsymbol w^k-\boldsymbol w^{k-1}\|_{L^2}^2
\leq cA_k^\varepsilon
\to0.
\end{equation*}
Again by H\"older's inequality we obtain $\|\boldsymbol w^k-\boldsymbol w^{k-1}\|_{L^q}\to0$ as $k\to\infty$ for any $q\in[1,\infty)$
and therefore by induction $\boldsymbol w^{k_n+K}\to\boldsymbol w$ as $n\to\infty$.

Next consider the convergence of $y^k$.
By \cref{lem:L1Continuity of Cpoly} we have $\mathscr{F}(\boldsymbol w^{k_n-1})\to\mathscr{F}(\boldsymbol w)$ in $L^1(\Sigma)$
so that also $Y_{\boldsymbol w^{k_n-1}}\to Y_{\boldsymbol w}$ in $L^1(\Sigma)$ as $n\to\infty$ for the quantity from \cref{thm:optimalY}.
Now by \cref{thm:optimalY} we have
\begin{equation*}
y^{k_n}
=y_{\boldsymbol w^{k_n-1}}
=\mathscr{I}({\boldsymbol w^{k_n-1}})\exp\left(\tfrac{f-Y_{\boldsymbol w^{k_n-1}}}{\sigma^2}\right)
=I_0\exp\left(-\sum_{i=1}^{N_{\text{mat}}}g_i\mathscr Dw_i^{k_n-1}+\tfrac{f-Y_{\boldsymbol w^{k_n-1}}}{\sigma^2}\right).
%\to\exp\left(-\sum_{i=1}^{N_{\text{mat}}}g_i\mathscr Dw_i+\tfrac{f-Y_{\boldsymbol w}}{\sigma^2}\right)
\end{equation*}
Repeating the argument of \cref{lem:L1Continuity of Cpoly}
(only with $-\sum_{i=1}^{N_{\text{mat}}}g_i\mathscr Dw_i$ replaced by $-\sum_{i=1}^{N_{\text{mat}}}g_i\mathscr Dw_i+\tfrac{f-Y_{\boldsymbol w}}{\sigma^2}$)
we obtain
\begin{equation*}
y^{k_n}\to y=y_{\boldsymbol w}
\end{equation*}
strongly in $L^1((E_{\min},E_{\max})\times\Sigma)$ as $n\to\infty$.
Note that from the mere coercivity of $F$ we would only have obtained weak convergence of a subsequence of $y^{k_n}$ to $y$.

\noindent\emph{Step 2 -- convergence of dual iterates.}
We first prove that $\int_{E_{\min}}^{E_{\max}}\mathscr{I}(\boldsymbol w^{k_n-1})g_i\,\mathrm d\mathcal L^1$ converges in any $L^q(\Sigma)$ with $q\in[1,\infty)$.
Indeed, exploiting that the exponential function has unit Lipschitz constant on the negative real axis,
\begin{align*}
&\int_\Sigma\left(\int_{E_{\min}}^{E_{\max}}\mathscr{I}(\boldsymbol w^{k_n-1})g_i\,\mathrm d\mathcal L^1
-\int_{E_{\min}}^{E_{\max}}\mathscr{I}(\boldsymbol w)g_i\,\mathrm d\mathcal L^1\right)^q\mathrm d\mathcal H^n\\
&\leq\int_\Sigma\left(\int_{E_{\min}}^{E_{\max}}I_0(E)g_i(E)\left|\exp\left(-\sum_{j=1}^{N_{\text{mat}}}g_j\mathscr Dw_j^{k_n-1}\right)-\exp\left(-\sum_{j=1}^{N_{\text{mat}}}g_j\mathscr Dw_j\right)\right|\,\mathrm d\mathcal L^1\right)^q\mathrm d\mathcal H^n\\
&\leq\int_\Sigma\left(\int_{E_{\min}}^{E_{\max}}I_0(E)g_i(E)\sum_{j=1}^{N_{\text{mat}}}g_j\left|\mathscr D(w_j^{k_n-1}-w_j)\right|\,\mathrm d\mathcal L^1\right)^q\mathrm d\mathcal H^n\\
&\leq\left(\int_{E_{\min}}^{E_{\max}}I_0(E)g_i(E)\max_jg_j(E)\,\mathrm d\mathcal L^1\right)^q\int_\Sigma\left|\mathscr D(\boldsymbol w^{k_n-1}-\boldsymbol w)\right|^q\mathrm d\mathcal H^n,
\end{align*}
which converges to zero as $\mathscr D(w_i^{k_n-1}-w_i)\to0$ in $L^q(\Sigma)$ by the continuity of $\mathscr D$.
As a consequence, for $n\to\infty$ we also have the limit
\begin{equation*}
\int_{E_{\min}}^{E_{\max}}y^{k_n}g_i\,\mathrm d\mathcal L^1
=\int_{E_{\min}}^{E_{\max}}\mathscr{I}(\boldsymbol w^{k_n-1})g_i\,\mathrm d\mathcal L^1\exp\left(\tfrac{f-Y_{\boldsymbol w^{k_n-1}}}{\sigma^2}\right)
\to\int_{E_{\min}}^{E_{\max}}yg_i\,\mathrm d\mathcal L^1
\end{equation*}
strongly in any $L^q(\Sigma)$ with $q\in[1,\infty)$
(recall that $Y_{\boldsymbol w^{k_n-1}}$ converges in $L^1(\Sigma)$ and thus the exponential term converges in any $L^q(\Sigma)$).
Now we can rewrite \eqref{eqn:ocSimplified} as
\begin{equation*}
p_i^{k_n}=\frac{\mathscr{D}^*\left(\int_{E_{\min}}^{E_{\max}}y^{k_n}g_i\,\mathrm d\mathcal{L}^1\right)(w_i^{{k_n}}-w_i^{{k_n}-1})}{-\alpha\omega_{k_n}(w^{{k_n}-1}_i+\varepsilon)}
+\frac1\alpha\mathscr D^*\left(\int_{E_{\min}}^{E_{\max}}\mathscr I(\boldsymbol w^{{k_n}-1})g_i-y^{k_n}g_i\,\mathrm d\mathcal{L}^1\right)
+\frac\beta\alpha\left(w_i^{{k_n}-1}-\frac1{N_{\text{mat}}}\right),
\end{equation*}
which by the continuity of $\mathscr D^*$ converges strongly in any $L^q(\Omega)$ with $q\in[1,\infty)$ to
\begin{equation}\label{eqn:ocRearranged}
p_i=\frac1\alpha\mathscr D^*\left(\int_{E_{\min}}^{E_{\max}}\mathscr I(\boldsymbol w)g_i-yg_i\,\mathrm d\mathcal{L}^1\right)
+\frac\beta\alpha\left(w_i-\frac1{N_{\text{mat}}}\right).
\end{equation}
By the continuous embedding of $\mathrm{BV}(\Omega)$ into $L^{\frac n{n-1}}(\Omega)$ this also implies weak-* convergence of $p_i^{k_n}$ to $p_i$ in the dual space $\mathrm{BV}(\Omega)^*$.
% Note that as a subgradient of the total variation, $p_i^{k_n}$ satisfies $\langle p_i^{k_n},z\rangle\leq\mathrm{TV}(z)$ and thus $\|p_i^{k_n}\|_{\mathrm{BV}^*}\leq 1$
% so that one could also have exploited the Banach--Alaoglu theorem.
% However, this would only result in converging subnets rather than subsequences since $\mathrm{BV}(\Omega)$ is not separable.

\noindent\emph{Step 3 -- criticality of limit.}
We first show that $p_i\in\partial R_1(w_i)$.
To this end let $z\in\mathrm{BV}(\Omega)$ be arbitrary, then
\begin{equation*}
R_1(w_i^{k_n})+\langle p_i^{k_n},z-w_i^{k_n}\rangle\leq R_1(z)
\end{equation*}
since $p_i^{k_n}$ is a subgradient of $R_1$ at $w_i^{k_n}$.
Due to the sequential lower semi-continuity of $R_1$ and the strong convergence of both $p_i^{k_n}$ and $w_i^{k_n}$ in any Lebesgue space, taking the limit $n\to\infty$ on both sides yields
\begin{equation*}
R_1(w_i)+\langle p_i,z-w_i\rangle\leq R_1(z).
\end{equation*}
By the arbitrariness of $z$ we see $p_i\in\partial R_1(w_i)$.
Therefore, \eqref{eqn:ocRearranged} implies that the optimality condition \eqref{eqn:ocW} for minimizing $F$ with respect to $\boldsymbol w$ is satisfied.
The optimality condition for minimizing $F$ with respect to $y$ is also satisfied due to $y=y_{\boldsymbol w}$, hence the point $(y,\boldsymbol w)$ is critical for $F$.
\end{proof}

%Here we attempt to calculate the Lipschitz constant $L$. We notice that 
%\[\mathscr{F}(\boldsymbol{w}) = \int_{E_{\min}}^{E_{\max}}I_0(E)\exp(-\mathscr{D}\boldsymbol{w}\cdot\boldsymbol{g})d\mathcal{L}^1\leq max(I_0(E))\mathcal{L}^1(E)\],
%
%\[
%\int_{E_{\min}}^{E_{\max}}\mathscr{I}(\boldsymbol{w})g_id\mathcal{L}^1 =\int_{E_{\min}}^{E_{\max}}I_0(E)\exp(-\mathscr{D}\boldsymbol{w}\cdot\boldsymbol{g})g_id\mathcal{L}^1 \leq max_i(I_0(E)g_i)\mathcal{L}^1(E)
%\]
%
% \[
% \int_{E_{\min}}^{E_{\max}}\mathscr{I}(\boldsymbol{w})g_kg_i d\mathcal{L}^1 = \int_{E_{\min}}^{E_{\max}}I_0(E)\exp(-\mathscr{D}\boldsymbol{w}\cdot\boldsymbol{g})g_ig_k \leq max_{i,k}(I_0(E)g_ig_k)\mathcal{L}^1(E)
% \]

\section{Numerical results}\label{sec:Numresults}

As examples supporting the feasibility and usefulness of the proposed model we will reconstruct a phantom from measurements with different noise levels.

\subsection{Phantom data}

We use the phantom from \cite{Gao_2011} (\cref{fig:Phantom_attenuation} left) consisting of disks of different materials
(we will use up to six different materials including air).
The different materials also stem from \cite{Gao_2011} and are specified in \cref{fig:Phantom_attenuation} (middle)
together with their attenuation coefficients $g_i$, which are taken from the NIST dataset \url{www.nist.gov/pml/
	data/xraycoef/}.
The intensity $I_0$ emitted by the X-ray source per space angle is taken as $I_0(E)=\bar I i_0(E)$
for the profile $i_0$ shown in \cref{fig:Phantom_attenuation} right (taken from \cite{Ruth1997}) and $\bar I$ modelling the source strength and thereby the signal to noise ratio.

In our numerical experiments we actually discretize the photon energy range $[E_{\min},E_{\max}]$ by seven discrete energy levels
and the phantom by $64\times64$ pixels.

Measurements are obtained by computing the forward operator $\mathscr I(w)(E,\boldsymbol x,\boldsymbol\theta)$,
imposing Poisson noise to obtain $y(E,\boldsymbol x,\boldsymbol\theta)$,
integrating this $y$ in $E$ and finally adding Gaussian noise of different standard deviations $\sigma$.

\begin{figure}[h]
	\centering
	
	\begin{tabular}{c  c  c }
			\subfloat[]{{\includegraphics[width=.3\linewidth]{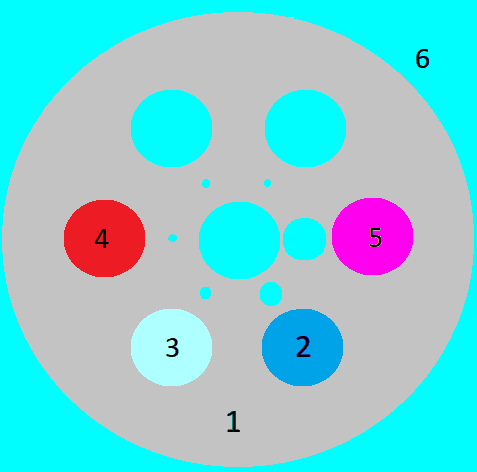} }} & \subfloat[]{{\includegraphics[width=.31\linewidth,height=.32\linewidth]{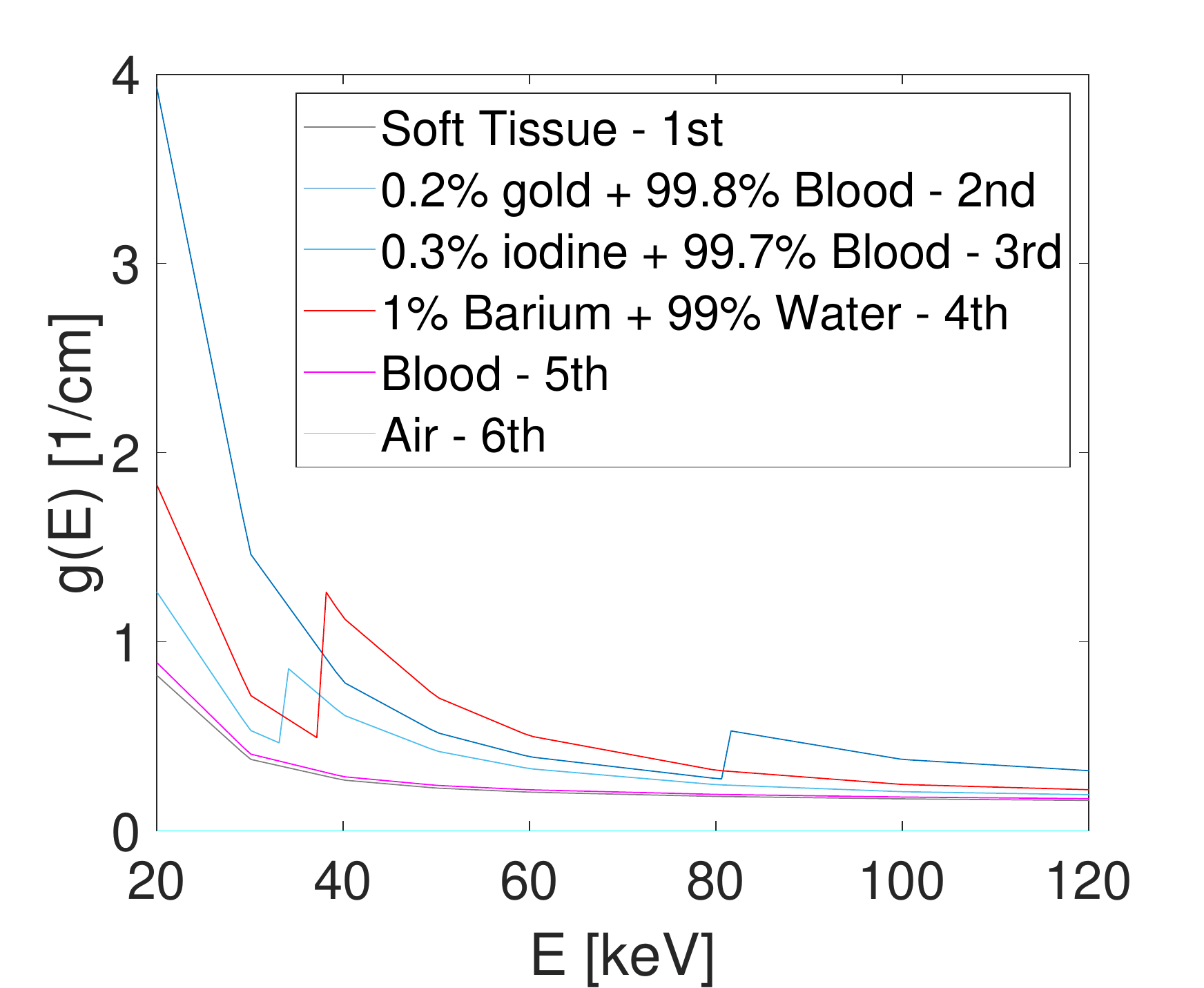} }} & \hspace{-0.3in}
			\subfloat[]{\begin{tikzpicture}
			\begin{axis}[xmin =20,xmax=120,ymin =0,ymax=1,
			width = .3\textwidth,
			height = .32\textwidth,
			xlabel = {${E}$ [Kev]},
			ylabel = {$i_0({E})$}]
			\addplot[domain=20:69] {x*exp(-(x-69)^2/2/22^2)/69};
			\addplot[black] coordinates {(69,1)(69,1/2)};
			\addplot[domain=70:120] {x*cos(deg((x-69)/(130-69)*pi/2))/2/69};
			\end{axis}
			\end{tikzpicture}}
			
				%\subfloat[]{{\includegraphics[width=.3\linewidth]{SourceIntensities.eps} }}
	\end{tabular}
		%trim=340 0 350 0,clip
	%	\centering
	%	\includegraphics[width=1.00\textwidth]{my_phantom_prism_and_att_coef.png}
	\caption{Employed phantom (a) with material indicated by number, attenuation coefficients of the different materials (b), intensity profile of the X-ray source (c).
	%\todo[inline]{make height of both right figures the same as the left one; make axis text in middle graph legible; make legend as big as possible such that it just so does not touch the curves; y-axis in right graph should go from 0 to 1 and should be named $i_0(E)$; axis text and numbering should be a little larger (this right figure is probably easier to include using tikz); curve in right figure should also be smooth; colors in middle and left graph should be the same (currently e.g. the sixth colour seems different)}
}
	\label{fig:Phantom_attenuation}
\end{figure}

% \begin{table}[h!]
% 	\centering
% \begin{tabular}{ |c|c|  }
% %	\hline
% %	\multicolumn{2}{|c|}{Material List} \\
% 	\hline
%  Material numbering& Material composition \\
% 	\hline
% 	1   & soft tissue   \\
% 	2&  0.2\% gold + 99.8\% Blood  \\
% 	3 & 0.3\% iodine + 99.7\% Blood \\
% 	4    & 1\% Barium + 99\% Water \\
% 	5  & Blood  \\
%     6& 0.3\% Gandolinium + 99.7\% Blood  \\
% 	7& Water  \\
% 	8,9,10,11,12,13,14 & 10\% Calcium + 90\% Blood  \\
% 	15& Air  \\
% 	\hline
% \end{tabular}
% \caption{Table of materials.}
% \label{table:matrods}
% \end{table}

\subsection{Reconstructions for different noise settings}

We will perform reconstructions for three different noise settings:
\begin{enumerate}
	\renewcommand{\theenumi}{\alph{enumi}}
	\item\label{enm:lowNoise} low Poisson and low Gaussian noise (high signal to noise ratio),
	\item\label{enm:middleNoise} high Poisson and low Gaussian noise (low X-ray source intensity),
	\item\label{enm:highNoise} high Poisson and high Gaussian noise (low X-ray source intensity \& detector quality).
\end{enumerate}
The standard deviation for Gaussian noise in the three cases is taken as $\sigma=2\cdot 10^{-3}, 2\cdot10^{-6}, 10^2$, respectively.
%\todo[inline]{why are two different values for low Gaussian noise chosen?}
The X-ray source intensity in the cases is taken as $\bar I=3\cdot10^{11}, 1500$, and again $1500$.
Examples of the corresponding sinograms along with their histograms are shown in \cref{fig:CTdata}.
%
%\begin{figure}[h !]
%	\centering
%	\includegraphics[width=1.00\textwidth]{SourceIntensities.eps}
%	\caption{Source energy spectrum for 3 different noise cases.
%	\todo[inline]{only show $i_0$ (dimensionless version of $I_0$, maximum value is one) and put it into \cref{fig:Phantom_attenuation}; also show it as a smooth function -- we will later explain that we discretize the energy interval with 8 points}}
%	\label{fig:SourceIntensities}
%\end{figure}

\begin{figure}
\centering
\setlength\unitlength{\linewidth}%
\setlength\tabcolsep{.01\linewidth}%
\begin{tabular}{c|c|c}
\includegraphics[width=0.15\linewidth]{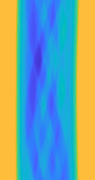}
\begin{tikzpicture}
\begin{axis}[ybar interval, ymax=6000, ymin=0, xmin=521426877054.930176, xmax=1087547486428.854248, ticks=none, yscale=0.75, xscale=.33, grid=none]
\addplot coordinates { (550000000000,17) (570000000000,99) (590000000000,197) (610000000000,494) (630000000000,588) (650000000000,797) (670000000000,1223) (690000000000,1203) (710000000000,829) (730000000000,838) (750000000000,1398) (770000000000,1199) (790000000000,664) (810000000000,369) (830000000000,307) (850000000000,166) (870000000000,267) (890000000000,76) (910000000000,125) (930000000000,158) (950000000000,55) (970000000000,81) (990000000000,5950) (1010000000000,0) };
\end{axis}
\end{tikzpicture}
&
\includegraphics[width=0.15\linewidth]{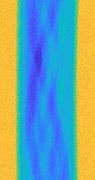}
\begin{tikzpicture}
\begin{axis}[ybar interval, ymax=6000, ymin=0, xmin=2607.134385, xmax=5437.737432, ticks=none, yscale=0.75, xscale=.33, grid=none]
\addplot coordinates { (2650,2) (2750,31) (2850,108) (2950,214) (3050,462) (3150,610) (3250,857) (3350,1148) (3450,1130) (3550,859) (3650,968) (3750,1262) (3850,1128) (3950,695) (4050,418) (4150,306) (4250,215) (4350,204) (4450,124) (4550,126) (4650,134) (4750,103) (4850,855) (4950,2962) (5050,1962) (5150,210) (5250,7) (5350,0) };
\end{axis}
\end{tikzpicture}
&
\includegraphics[width=0.15\linewidth]{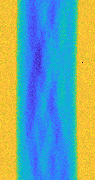}
\begin{tikzpicture}
\begin{axis}[ybar interval, ymax=6000, ymin=0, xmin=2607.134385, xmax=5437.737432, ticks=none, yscale=0.75, xscale=.33, grid=none]
\addplot coordinates { (2450,1) (2550,2) (2650,10) (2750,48) (2850,129) (2950,272) (3050,448) (3150,635) (3250,847) (3350,1030) (3450,1045) (3550,1015) (3650,1032) (3750,1073) (3850,1027) (3950,720) (4050,481) (4150,331) (4250,229) (4350,217) (4450,147) (4550,125) (4650,179) (4750,445) (4850,1201) (4950,1860) (5050,1618) (5150,726) (5250,179) (5350,26) (5450,2) (5550,0) };
\end{axis}
\end{tikzpicture}
\\
\makebox[0.15\linewidth]{}
\begin{picture}(0.15,0)
\put(0,.02){\includegraphics[width=0.15\linewidth,height=1ex]{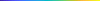}}
\put(-.01,0){\footnotesize$5.2\mathrm{e}{11}$}
\put(.09,0){\footnotesize$10.9\mathrm{e}{11}$}
\end{picture}&
\makebox[0.15\linewidth]{}
\begin{picture}(0.15,0)
\put(0,.02){\includegraphics[width=0.15\linewidth,height=1ex]{colorbar}}
\put(-.01,0){\footnotesize$2.6\mathrm{e}{3}$}
\put(.11,0){\footnotesize$5.4\mathrm{e}{3}$}
\end{picture}&
\makebox[0.15\linewidth]{}
\begin{picture}(0.15,0)
\put(0,.02){\includegraphics[width=0.15\linewidth,height=1ex]{colorbar}}
\put(-.01,0){\footnotesize$2.6\mathrm{e}{3}$}
\put(.11,0){\footnotesize$5.4\mathrm{e}{3}$}
\end{picture}
\\
noise \eqref{enm:lowNoise}&
noise \eqref{enm:middleNoise}&
noise \eqref{enm:highNoise}
\end{tabular}
	\caption{Measured sinograms for the three different noise settings along with the histograms of their values indicating the amount of spreading due to noise.
	%\todo[inline]{remove all ticks and numbers and also the colorbar from sinograms (i.e. just save them with imsave and then insert the images here); remove all ticks and numbers from histograms and make them s thin as shown here; put colorbars instead underneath the histograms such that the histogram x-axis numbering is replaced by the colorbar and its numbering as shown for the left example; enlarge numbers of the colorbar so that they are readable}
}
	%	\todo[inline]{Rescale x- and y-axis of histograms: y-axis should always go up to 6000, x-axis should be $\bar I$ times an interval which is the same for all three; remove y-axis numbers in histograms; why is the sinogram colormap using a logscale, but not the histograms? I think we should probably use a normal (nonlogarithmic) scale for the colormap; can we remove the colorbars and put the histograms and sinograms next to each other (the colorrange can be seen from the x-axis of the histograms - if we want to be fancy we could also put the colorbar horizontally as the x-axis)? Label single graphics with \eqref{enm:lowNoise}-\eqref{enm:highNoise}}
	%The first  line shows polyenergetic CT measurements with low Poisson and Gaussian noise with standard deviation $\sigma = 2\cdot 10^{-2}$, with high Poisson and low Gaussian noise with  standard deviation  $\sigma = 2\cdot 10^{-6}.$  and with both high  Poisson and Gaussian noise with standard deviation $\sigma = 10^2$ are in  $\log_{10}$ scale.The second row correspond to their histograms.
	%}
	\label{fig:CTdata}
\end{figure}

% To generate the 2D measurements and imposing Poisson and Gaussian noise we used the build in MATLAB functions radon,  %experiments the Astratoolbox is used \cite{arclale2015} and \cite{arclale2016} since, it provides efficient implementations of the fan beam transform and its dual. 
% poissrnd and randn.

%  Figures (\ref{fig:rec_4mat_low_noise_reinit}), (\ref{fig:rec_4mat_high_Poisson_noise}) and (\ref{fig:rec_4mat_high_Poisson_Gaussian_noises}) shows the ground truth materials along with the corresponding reconstructions obtained using the algorithms (\ref{alg:modifiedEM}), (\ref{sec:PDscheme}) and (\ref{Alg:FULL_ADMM}) in each noise case respectively.

For each noise setting \eqref{enm:lowNoise}-\eqref{enm:highNoise} we run the reconstruction using our main algorithm from \cref{sec:FBSalg},
but we also test the two alternatives described in the appendix in \cref{sec:PDscheme,sec:ADMM}.
In the following we abbreviate the algorithms as EM (expectation maximization), PD (primal-dual), and ADMM (alternating direction of multipliers method).

For the EM algorithm we use the damping parameter $\omega = .1$ with maximum iteration number 23000 for the cases \eqref{enm:lowNoise}, \eqref{enm:highNoise} and 17000 else.
In the inner primal-dual iteration from \cref{sec:ROF} we pick step lengths $\tau = 0.02$, $\sigma=6.25$, overrelaxation $\theta = .1$, and iteration number $1000$.
The reinitialization from \cref{subsec:reint} is performed every 11000, 6000, and 10000 iterations, respectively.

For the PD algorithm we employ 500 maximum inner primal-dual iterations
with an overrelaxation $\theta=1$ and step sizes $\rho_1=\frac1{20}$, $\rho_2=\frac1{100}$, $\tau=\frac1{200}$.
For the three noise cases we require 50, 32, and 55 outer iterations,
and the reinitialization from \cref{subsec:reint} is performed every 15, 15 and 5 iterations, respectively.

For the ADMM we require 8000, 1500 and 1600 iterations in the different noise cases,
and the reinitialization from \cref{subsec:reint} is performed every 150  iterations for the noise case \eqref{enm:lowNoise} and every 350 iterations for \eqref{enm:middleNoise}, \eqref{enm:highNoise} respectively.
The Lagrange penalty weights $\mu_1,\mu_2$ are updated adaptively whenever $F(y^{k+1},\boldsymbol{\tilde w}^{k+1})>F(y^{k},\boldsymbol{\tilde w}^{k})$ by multiplication with $10$ when their values are less than 1e9, 1e5 and 1e1 in cases \eqref{enm:lowNoise}, \eqref{enm:middleNoise} and \eqref{enm:highNoise} respectively, by multiplication with $2$ when their values are between 1e9 and 1e13 or 1e5 and 1e13 or 1e1 and 1e8  in each case respectively 
and by division by 1000 whenever exceed  1e13 in cases \eqref{enm:lowNoise}, \eqref{enm:middleNoise} and 1e8 else. For updating the Lagrange penalty weight $\mu_3$ we follow the same strategy with the difference that whenever $\mu_3$ exceeds  1e9, 1e5 and 1e1 in each case respectively we divide it by 10.% then $\mu^{new}_i = 10\mu^{old}_i$, $i=1,2,3$    We  % and when we reinitialize $\boldsymbol{\tilde w}$ and $\boldsymbol{w}$ we reduce the $\mu_1$ and $\mu_2$ by 1000. 
%Notice that we require also that the values of $\mu_3$ is not exceeding 1e10 as the Newton's methods convergence becomes slow. 
% For this example we start with initial values of $\mu^0_1 =1.5625\cdot 10^3 $, $\mu^0_2 =1.5625\cdot 10^3 $, $\mu^0_3 =1.5625\cdot 10^3 $ in the low noise case and$\mu^0_1 =0.0156 $, $\mu^0_2 =0.0156 $, $\mu^0_3 =0.0016$  in the high noise cases respectively and then we update them during the execution of the algorithm  and we reinitialize every 150 iterations. The maximum number of ADMM iterations are to 8000, 1600 and 1500 for each noise case respectively. 

The regularization parameter was chosen as $\alpha=10^6$, $0.1$, $0.07$ in the three noise settings
(note that for high X-ray source intensity the Kullback-Leibler data term becomes very strong so that $\alpha$ needs to be high in that case to have any influence),
while $\beta$ was actually set to zero experimentally:
It turned out that the above described regular reinitializations from \cref{subsec:reint} were sufficient and more efficient in guiding the reconstruction to pure material distributions.

The reconstructions for the three noise settings are provided in \Cref{fig:rec_4mat_low_noise_reinit}.
In noise case \eqref{enm:lowNoise} the reconstructions are very clean and almost equal the ground truth,
while small features (the smaller holes) are removed in the results for noise cases \eqref{enm:middleNoise} and \eqref{enm:highNoise}.
In addition, in these cases the different materials sometimes got mixed up, either in small subregions near the boundary between two materials
or for a complete material as in noise case \eqref{enm:highNoise}, where the PD reaches a minimum interpreting ground truth material 2 as material 4.

\Cref{fig:rec_5mat_low_noise_reinit} shows reconstructions for noise case \eqref{enm:lowNoise} with an additional material
taken to have almost the same attenuation behaviour as soft tissue to produce a highly challenging situation with two very similar materials.
All algorithms produce spurious pixels, where the effect is the strongest for the EM algorithm:
As already predicted in \cref{sec:FBSalg} it is quite slow in removing artefacts in the low noise case.

\begin{figure}
  \centering%
  \setlength\unitlength{.12\linewidth}%
  \setlength\tabcolsep{.04\unitlength}%
	\vspace*{-2\baselineskip}
	\begin{tabular}{l|l}
		&
		\begin{tabular}{lccccc}
			\begin{minipage}[b]{3.35em}
				ground truth\\
			\end{minipage}
			&\includegraphics[width=\unitlength]{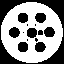}&\includegraphics[width=\unitlength]{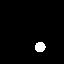}&\includegraphics[width=\unitlength]{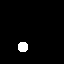}&\includegraphics[width=\unitlength]{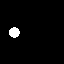}&\includegraphics[width=\unitlength]{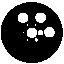}\\
		\end{tabular}\\
		\hline\\[-2ex]
		noise \eqref{enm:lowNoise}&
		\begin{tabular}{lccccc}
			\raisebox{.42\unitlength}{EM}&\includegraphics[width=\unitlength]{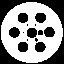}&\includegraphics[width=\unitlength]{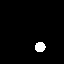}&\includegraphics[width=\unitlength]{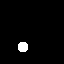}&\includegraphics[width=\unitlength]{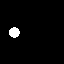}&\includegraphics[width=\unitlength]{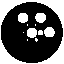}\\
			\raisebox{.42\unitlength}{PD}&\includegraphics[width=\unitlength]{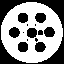}&\includegraphics[width=\unitlength]{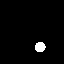}&\includegraphics[width=\unitlength]{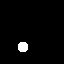}&\includegraphics[width=\unitlength]{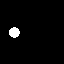}&\includegraphics[width=\unitlength]{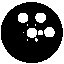}\\
			\raisebox{.42\unitlength}{ADMM}&\includegraphics[width=\unitlength]{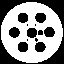}&\includegraphics[width=\unitlength]{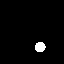}&\includegraphics[width=\unitlength]{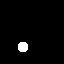}&\includegraphics[width=\unitlength]{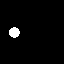}&\includegraphics[width=\unitlength]{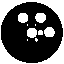}
		\end{tabular}\\
		\hline\\[-2ex]
		noise \eqref{enm:middleNoise}&
		\begin{tabular}{lccccc}
			\raisebox{.42\unitlength}{EM}&\includegraphics[width=\unitlength]{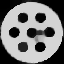}&\includegraphics[width=\unitlength]{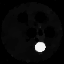}&\includegraphics[width=\unitlength]{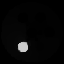}&\includegraphics[width=\unitlength]{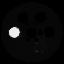}&\includegraphics[width=\unitlength]{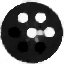}\\
			\raisebox{.42\unitlength}{PD}&\includegraphics[width=\unitlength]{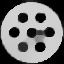}&\includegraphics[width=\unitlength]{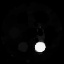}&\includegraphics[width=\unitlength]{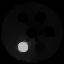}&\includegraphics[width=\unitlength]{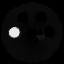}&\includegraphics[width=\unitlength]{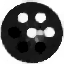}\\
			\raisebox{.42\unitlength}{ADMM}&\includegraphics[width=\unitlength]{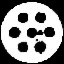}&\includegraphics[width=\unitlength]{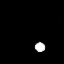}&\includegraphics[width=\unitlength]{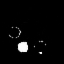}&\includegraphics[width=\unitlength]{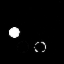}&\includegraphics[width=\unitlength]{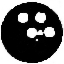}
		\end{tabular}\\
		\hline\\[-2ex]
		noise \eqref{enm:highNoise}&
		\begin{tabular}{lccccc}
			\raisebox{.42\unitlength}{EM}&\includegraphics[width=\unitlength]{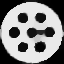}&\includegraphics[width=\unitlength]{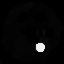}&\includegraphics[width=\unitlength]{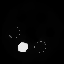}&\includegraphics[width=\unitlength]{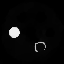}&\includegraphics[width=\unitlength]{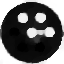}\\
			\raisebox{.42\unitlength}{PD}&\includegraphics[width=\unitlength]{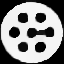}&\includegraphics[width=\unitlength]{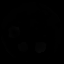}&\includegraphics[width=\unitlength]{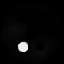}&\includegraphics[width=\unitlength]{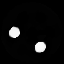}&\includegraphics[width=\unitlength]{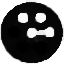}\\
			\raisebox{.42\unitlength}{ADMM}&\includegraphics[width=\unitlength]{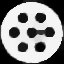}&\includegraphics[width=\unitlength]{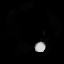}&\includegraphics[width=\unitlength]{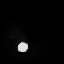}&\includegraphics[width=\unitlength]{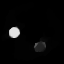}&\includegraphics[width=\unitlength]{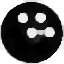}
		\end{tabular}
	\end{tabular}
	\caption{Reconstructed materials for the different noise settings and algorithms.}
	\label{fig:rec_4mat_low_noise_reinit}
\end{figure}

%\begin{figure}
%  \centering
%  \includegraphics[width=0.85\textwidth]{material4_plus_air_rec_high_Poisson_noise.png}
%  \caption{ High Poisson noise case: The first column corresponds to ground truth materials. The second third  and fourth columns are the reconstructed materials  with the FBS, the ADMM  and FBS primal dual methods with the FBS, the ADMM  and FBS primal dual methods respectively with TV regularization parameter $\alpha = 0.1$.
%  \todo[inline]{remove colorbars; strongly reduce white space between images; put results in rows rather than columns; put horizontal line between ground truth and reconstructions; combine all three noise settings in one figure as shown in previous figure}
%  }
%  \label{fig:rec_4mat_high_Poisson_noise}
%\end{figure}
%
%\begin{figure}
%  \centering
%  \includegraphics[width=0.85\textwidth]{material4_plus_air_rec_high_Poisson_Guassian_noises.png}
%  \caption{ High Poisson and high Gaussian noise case: The first column corresponds to ground truth materials. The second third  and fourth columns are the reconstructed materials with the FBS, the ADMM  and FBS primal dual methods with the FBS, the ADMM  and FBS primal dual methods respectively with TV regularization parameter $\alpha = 0.07$.
%  \todo[inline]{remove colorbars; strongly reduce white space between images; put results in rows rather than columns; put horizontal line between ground truth and reconstructions; combine all three noise settings in one figure as shown in previous figure}
%  }
%  \label{fig:rec_4mat_high_Poisson_Gaussian_noises}
%\end{figure}

\begin{figure}
  \centering%
  \setlength\unitlength{1.8cm}%
  \setlength\tabcolsep{.04\unitlength}%
	\begin{tabular}{l|l}
		&
		\begin{tabular}{lcccccc}
			\begin{minipage}[b]{3.30em}
				ground truth\\
			\end{minipage}
			&\includegraphics[width=\unitlength]{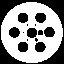}&\includegraphics[width=\unitlength]{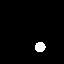}&\includegraphics[width=\unitlength]{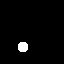}&\includegraphics[width=\unitlength]{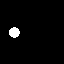}&\includegraphics[width=\unitlength]{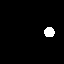}&\includegraphics[width=\unitlength]{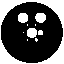}
		\end{tabular}\\
		\hline\\[-2ex]
		noise \eqref{enm:lowNoise}&
		\begin{tabular}{lcccccc}
			\raisebox{.42\unitlength}{EM}&\includegraphics[width=\unitlength]{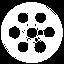}&\includegraphics[width=\unitlength]{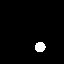}&\includegraphics[width=\unitlength]{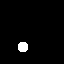}&\includegraphics[width=\unitlength]{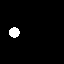}&\includegraphics[width=\unitlength]{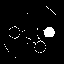}&\includegraphics[width=\unitlength]{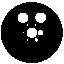}\\
			\raisebox{.42\unitlength}{PD}&\includegraphics[width=\unitlength]{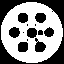}&\includegraphics[width=\unitlength]{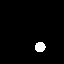}&\includegraphics[width=\unitlength]{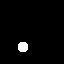}&\includegraphics[width=\unitlength]{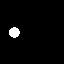}&\includegraphics[width=\unitlength]{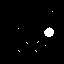}&\includegraphics[width=\unitlength]{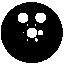}\\
			\raisebox{.42\unitlength}{ADMM}&\includegraphics[width=\unitlength]{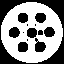}&\includegraphics[width=\unitlength]{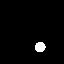}&\includegraphics[width=\unitlength]{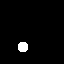}&\includegraphics[width=\unitlength]{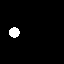}&\includegraphics[width=\unitlength]{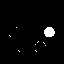}&\includegraphics[width=\unitlength]{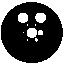}
		\end{tabular}
	\end{tabular}
	\caption{Reconstructed materials for the different noise settings and algorithms with one additional material (the fifth) of almost same attenuation characteristics as the first material. Despite the strong similarity between both materials they can be almost perfectly separated.}
	\label{fig:rec_5mat_low_noise_reinit}
\end{figure}

\section{Conclusion}
In general for inverse problems, an improved accuracy of forward modelling should lead to improved solutions.
We here covered three particular aspects that are important in several regimes of computed tomography:
the polychromaticity of the X-ray source
(which is particularly important if the attenuation varies strongly spatially and the X-ray dose is comparatively low),
the distinction between photon counting noise and electronic noise
(which is important for radiation doses in which the electronic noise becomes comparable to photon counting noise),
and the explicit modelling of different material compositions.
Despite the non-convexity, which is unavoidable in a model with the above ingredients,
the well-posedness of the model and the convergence of the proposed algorithm could be proven.
In addition, numerical examples on phantom data give satisfying results and validate the proposed model and algorithm.

\appendix

\section{An alternating minimization with primal-dual updates} \label{sec:PDscheme}
While the previous algorithm is quite fast and efficient for noisy data and high regularization, for low noise we need to change it.
We still employ the same update for $y$, but the update of $\boldsymbol w$ will be performed by the primal-dual iteration of Chambolle and Pock \cite{Chambolle2011}.

\subsection{Primal-dual update of $\boldsymbol w$}
To this end we write the $\boldsymbol w$-dependent part of the energy $F$ as
\begin{multline*}
\frac{F(y,\boldsymbol w)-\frac1{2\rho^2}\int_\Sigma\left(f-\int_{E_{\min}}^{E_{\max}}y\,\mathrm d\mathcal{L}^1\right)^2\,\mathrm d\mathcal H^n}\alpha
=F_1((\mathscr D,D)\boldsymbol w)+F_2(\boldsymbol w)
\quad\text{with}\\
\begin{aligned}
F_1(\boldsymbol u,\boldsymbol X)
&=F_{11}(\boldsymbol u)+F_{12}(\boldsymbol X)\\
&=\frac1\alpha\int_\Sigma\int_{E_{\min}}^{E_{\max}}d_{\mathrm{KL}}\left(y,I_0\exp\left(-\sum_{i=1}^{N_{\text{mat}}}g_iu_i\right)\right)\,\mathrm d\mathcal{L}^1\,\mathrm d\mathcal H^n
+\sum_{i=1}^{N_{\text{mat}}}\|X_i\|_{\mathcal M},\\
F_2(\boldsymbol w)
&=F_{21}(\boldsymbol w)+F_{22}(\boldsymbol w)\\
&=\frac\beta\alpha R_2(\boldsymbol w)+\int_\Omega\iota_\Delta(\boldsymbol w)\,\mathrm{d}\mathcal L^n,
\end{aligned}
\end{multline*}
where $\|X\|_{\mathcal M}$ denotes the total variation or norm of the vector-valued measure $X$.
The corresponding primal-dual iteration with dual variables
$\boldsymbol\phi=(\phi_1,\ldots,\phi_{N_{\text{mat}}}):\Sigma\to\R^{N_{\text{mat}}}$ and $\psi=(\psi_1,\ldots,\psi_{N_{\text{mat}}}):\Sigma\to(\R^2)^{N_{\text{mat}}}$,
step sizes $\tau>0$ and $\rho_1,\rho_2>0$ for primal descent and dual ascent
as well as relaxation parameter $\theta\in[0,1]$ reads
\begin{align*}
\boldsymbol\phi^{k+1}
&={\prox}_{\rho_1 F_{11}^*}\left(\boldsymbol\phi^{k}+\rho_1\mathscr D\boldsymbol{\overline w}^k\right),\\
\boldsymbol\psi^{k+1}
&={\prox}_{\rho_2 F_{12}^*}\left(\boldsymbol\psi^{k}+\rho_2D\boldsymbol{\overline w}^k\right),\\
\boldsymbol w^{k+1}
&={\prox}_{\tau F_{22}}\left(\boldsymbol w^k-\tau(\mathscr D^*\boldsymbol\phi^{k+1}+D^*\boldsymbol\psi^{k+1})-\tau F_{21}'(\boldsymbol w^k)\right),\\
\boldsymbol{\overline w}^{k+1}
&=\boldsymbol w^{k+1}+\theta(\boldsymbol w^{k+1}-\boldsymbol w^k).
\end{align*}
Note that $D^*=-\div$.
To ensure stability, the step sizes have to be chosen small enough with respect to the norms of the linear operators.
A feasible choice from \cite{champolleprec2011}, which we employ in our implementation, is
$\rho_1=\frac12$ (the reciprocal of the absolute row sum of the discretized derivative operator $D$),
$\rho_2=\frac1{\mathrm{diam}\Omega}$ (the reciprocal of the maximum absolute row sum of the discretized beam transform $\mathscr D$), and
$\tau=\frac1{2+\pi}$ (the reciprocal of the maximum absolute row sum of the discretization of $\mathscr D^*+D^*$).
The proximal operator ${\prox}_{\tau F_{22}}$ merely is the pointwise Euclidean projection onto the probability simplex $\Delta$,
which was detailed in \cref{sec:simplexProjection} (this time the weights $r_i$ equal $1$).
As for ${\prox}_{\rho_2 F_{12}^*}$ it is straightforward to compute the Legendre--Fenchel conjugate
$F_{12}^*(\boldsymbol\psi)=0$ if $|\psi_i|\leq1$ almost everywhere on $\Sigma$ for all $i$ and $F_{12}^*(\boldsymbol\psi)=\infty$ else.
Thus the corresponding proxial operator reads
\begin{equation*}
{\prox}_{\rho_2 F_{12}^*}(\boldsymbol\psi)(\boldsymbol x)
=\left(\frac{\psi_i(\boldsymbol x)}{|\psi_i(\boldsymbol x)|}\min\{1,|\psi_i(\boldsymbol x)|\}\right)_{i=1,\ldots,N_{\text{mat}}}.
\end{equation*}
As for the proximal operator of $F_{11}^*$ we exploit the equivalence
\begin{equation*}
\boldsymbol\xi={\prox}_{\rho_1 F_{11}^*}(\boldsymbol\phi)
\quad\Leftrightarrow\quad
\boldsymbol\xi\in\partial F_{11}\left(\frac{\boldsymbol\phi-\boldsymbol\xi}{\rho_1}\right)
=\frac1\alpha\int_{E_{\min}}^{E_{\max}}y\boldsymbol g-I_0\boldsymbol g\exp\left(\sum_{i=1}^{N_{\text{mat}}}\frac{\xi_i-\phi_i}{\rho_1}g_i\right)\,\mathrm d\mathcal L^1,
\end{equation*}
where for ease of notation we identified the subdifferential with its single element.
This equation can be solved for $\boldsymbol\xi$ separately and independently at all $(\boldsymbol x,\boldsymbol\theta)\in\Sigma$.
Indeed, abbreviating for fixed $(\boldsymbol x,\boldsymbol\theta)\in\Sigma$
\begin{align*}
B&=\frac{\boldsymbol g}{\rho_1}:(E_{\min},E_{\max})\to\R^{N_{\text{mat}}},\\
s(\boldsymbol x,\boldsymbol\theta)&=\frac{\rho_1}\alpha I_0\exp\left(-\frac1{\rho_1}\sum_{i=1}^{N_{\text{mat}}}\phi_i(\boldsymbol x,\boldsymbol\theta)g_i\right):(E_{\min},E_{\max})\to\R,\\
y&=\frac{\rho_1}\alpha y(\boldsymbol x,\boldsymbol\theta):\Sigma\to\R,
\end{align*}
it can readily be checked that the sought $\xi(\boldsymbol x,\boldsymbol\theta)\in\R^{N_{\text{mat}}}$ is the unique minimizer of the convex functional
\begin{equation*}
\frac12\left|\xi(\boldsymbol x,\boldsymbol\theta)-\int_{E_{\min}}^{E_{\max}}yB\,\mathrm{d}\mathcal L^1\right|^2
+\int_{E_{\min}}^{E_{\max}}s(\boldsymbol x,\boldsymbol\theta)\exp(B\cdot\xi(\boldsymbol x,\boldsymbol\theta))\,\mathrm{d}\mathcal L^1.
% |x-y^TB|^2/2 + s^T exp(Bx)
\end{equation*}
Discretizing the integral with numerical quadrature, this turns into an expression of the form
\begin{equation}\label{eqn:proxMatEq}
\frac12\left|\xi(\boldsymbol x,\boldsymbol\theta)-B^Ty\right|^2+s^T\exp(B\xi(\boldsymbol x,\boldsymbol\theta)),
\end{equation}
where the exponential is applied componentwise
and for simplicity we used the same symbols for the matrices $s,B$ and the vector $y$ resulting from discretizing the corresponding continuous objects.
Thus we obtain
\begin{equation*}
\xi(\boldsymbol x,\boldsymbol\theta)
={\prox}_{h}(B^Ty)
\qquad\text{for }
h(\boldsymbol\xi)=s^T\exp(B\xi).
\end{equation*}
The computation of this proximal operator is detailed in the next paragraph.

\subsection{Proximal operator for vectors of exponentials}
The minimizer $\boldsymbol\xi\in\R^{N_{\text{mat}}}$ of \eqref{eqn:proxMatEq} is computed via the corresponding Newton iteration
\begin{equation*}
\boldsymbol\xi^{n+1}=\boldsymbol\xi^n-(I+B^T\mathrm{diag}(\exp(B\boldsymbol\xi^n+\log s))B)^{-1}(\boldsymbol\xi^n+B^T\exp(B\boldsymbol\xi^n+\log s)-B^Ty),
\end{equation*}
where exponential and logarithm are applied componentwise and $\mathrm{diag}(v)$ for a vector $v$ indicates the diagonal matrix with diagonal $v$.
Note that for numerical stability it turns out to be important to add $\log s$ inside the exponential rather than multiplying with $s$. 
As initialization we pick
\begin{equation*}
\boldsymbol\xi^1=B^\dagger(\log y-\max\{\log s,-\delta\})
\end{equation*}
in which $B^\dagger$ denotes the Moore--Penrose inverse of $B$ and $\delta$ is a fixed parameter introduced to safeguard the case of vanishing $s$.
In our simulations we take $\delta=1000$.
Alternatively one can also initialize with the optimal $\boldsymbol\xi$ from the previous primal-dual iteration.
The reasoning for this initialization is that the solution $\boldsymbol\xi$ must be very small in absolute value due to the exponential amplification;
thus the optimality condition is approximately $B^T\exp(B\boldsymbol\xi+\log s)-B^Ty=0$ of which $\boldsymbol\xi^1$ is an approximate solution.

\section{ADMM approach}\label{sec:ADMM}
Yet another alternative, which does not require any inner iterations, is the following implementation via the alternating direction method of multipliers (ADMM).
To this end we rewrite the minimization of $F$ as
\begin{multline*}
\min\{ \mathscr{R}(\boldsymbol{X},\boldsymbol{\tilde w})+\mathscr H(y,\boldsymbol{z})\,|\,
y:[E_{\min},E_{\max}]\times\Sigma\to\R,
% \boldsymbol{z}:\Sigma\to\R^{N_{\text{mat}}},
\boldsymbol{w}:\Omega\to\R^{N_{\text{mat}}},\\
% \boldsymbol{\tilde w}:\Omega\to\R^{N_{\text{mat}}},
% \boldsymbol{X}:\Omega\to\R^{N_{\text{mat}}\times n}
\hspace*{20ex}\boldsymbol{X}=\nabla \boldsymbol{w}, \ \boldsymbol{\tilde w}=\boldsymbol{w}, \ \boldsymbol{z}=\mathscr{D}\boldsymbol{w}\},
\text{ where}\\
\begin{aligned}
\mathscr R(\boldsymbol{X},\boldsymbol{\tilde w})
&=\alpha\sum_{i=1}^{N_{\text{mat}}}|X_i|(\Omega) +\int_{\Omega}\iota_{\Delta}(\boldsymbol{\tilde w})\mathrm{d}\mathcal{L}^n+\beta R_2(\boldsymbol{\tilde w}),\\
\mathscr H(y,\boldsymbol{z})
&=\frac{1}{2\sigma^2}\int_{\Sigma}\left(f-\int_{E_{\min}}^{E_{\max}}y\mathrm{d}\mathcal{L}^1\right)^2\mathrm{d}\mathcal{H}^n
+\int_{\Sigma}\int_{E{\min}}^{E_{\max}}d_{KL}\left(y,\hat{\mathscr{I}}( \boldsymbol{z})\right)\mathrm{d}\mathcal{L}^1\mathrm{d}\mathcal{H}^n
\end{aligned}
\end{multline*}
for $\hat{\mathscr I}(\boldsymbol z)=I_0\exp(-\sum_{i=1}^{N_{\text{mat}}}g_iz_i)$.
We form the augmented Lagrangian
\begin{multline*}
\mathscr{L}(y,\boldsymbol{w},\boldsymbol{\tilde w},\boldsymbol{X},\boldsymbol{z};\ \boldsymbol{\Lambda},\boldsymbol{\lambda}_1,\boldsymbol{\lambda}_2) =\mathscr{R}(\boldsymbol{X},\boldsymbol{\tilde w})+\mathscr H(y,\boldsymbol{z}) + \langle \boldsymbol{\Lambda},\boldsymbol{X}-\nabla\boldsymbol{w}\rangle +\frac{\mu_1}{2}\|\boldsymbol{X}-\nabla\boldsymbol{w}\|^2_{L^2}\\
+\langle \boldsymbol{\lambda}_1,\boldsymbol{\tilde w}-\boldsymbol{w}\rangle +\frac{\mu_2}{2}\|\boldsymbol{\tilde w}-\boldsymbol{w}\|^2_{L^2} +\langle \boldsymbol{\lambda}_2,\boldsymbol{z}-\mathscr{D}\boldsymbol{w}\rangle +\frac{\mu_3}{2}\|\boldsymbol{z}-\mathscr{D}\boldsymbol{w}\|^2_{L^2}
\end{multline*}
with Lagrange multipliers $\boldsymbol{\Lambda}:\Omega \to \R^{ N_{\text{mat}}\times n}$, $\boldsymbol{\lambda}_1:\Omega \to \R^{ N_{\text{mat}}}$ and $\boldsymbol{\lambda}_2:\Sigma \to \R^{ N_{\text{mat}}}$
and penalty weights $\mu_1,\mu_2,\mu_3>0$.
The minimization problem is solved by alternatingly minimizing the augmented Lagrangian
with respect to the primal variables $\boldsymbol{w}$, $\boldsymbol{\tilde w}$, $\boldsymbol X$, $y$, and $\boldsymbol{z}$,
and then updating the Lagrange multipliers $\boldsymbol\Lambda$, $\boldsymbol{\lambda}_1$ and $\boldsymbol{\lambda}_2$, known as ADMM,
\begin{align}
% \label{eqn:General_ADMM_ROF_w}
\boldsymbol{w}^{k}
&= \argmin_{\boldsymbol{w}}\mathscr{L}(y^{k-1},\boldsymbol{w},\boldsymbol{\tilde{w}}^{k-1},\boldsymbol {X}^{k-1},\boldsymbol{z}^{k-1};\boldsymbol{\Lambda}^{k-1},\boldsymbol{\lambda}_1^{k-1},\boldsymbol{\lambda}^{k-1}_2),\\
%
% \label{eqn:General_ADMM_ROF_wtild}
% \boldsymbol{\tilde{w}}^{k+1}
% &= \argmin_{{\boldsymbol{\tilde{w}}}}\mathscr{L}(y^{k},\boldsymbol{w}^{k+1},\boldsymbol{\tilde{w}},\boldsymbol {X}^k,\boldsymbol{z}^k;\boldsymbol{\Lambda}^k,\boldsymbol{\lambda}_1^k,\boldsymbol{\lambda}^k_2),\\
% %
% \label{eqn:General_ADMM_ROF_X}
% \boldsymbol X^{k+1}
% &= \argmin_{\boldsymbol X}\mathscr{L}(y^{k},\boldsymbol{w}^{k+1},\boldsymbol{\tilde{w}}^{k+1},\boldsymbol {X},\boldsymbol{z}^k;\boldsymbol{\Lambda}^k,\boldsymbol{\lambda}_1^k,\boldsymbol{\lambda}^k_2),\\
% %
% \label{eqn:General_ADMM_ROF_yz}
%  (y^{k+1},\boldsymbol{z}^{k+1})
% &= \argmin_{(y,\boldsymbol w)}\mathscr{L}(y,\boldsymbol{w}^{k+1},\boldsymbol{\tilde{w}}^{k+1},\boldsymbol {X}^{k+1},\boldsymbol{z};\boldsymbol{\Lambda}^k,\boldsymbol{\lambda}_1^k,\boldsymbol{\lambda}^k_2),\\
%
(\boldsymbol{\tilde{w}}^{k},\boldsymbol X^{k},y^{k},\boldsymbol{z}^{k})
&= \argmin_{(\boldsymbol{\tilde{w}},\boldsymbol X,y,\boldsymbol z)}\mathscr{L}(y,\boldsymbol{w}^{k},\boldsymbol{\tilde{w}},\boldsymbol{X},\boldsymbol{z};\boldsymbol{\Lambda}^{k-1},\boldsymbol{\lambda}_1^{k-1},\boldsymbol{\lambda}^{k-1}_2),\\
\label{eqn:General_ADMM_ROF_Lambda}
(\boldsymbol\Lambda^{k},\boldsymbol{\lambda}^{k}_1,\boldsymbol{\lambda}_2^{k})
&=(\boldsymbol\Lambda^{k\!-\!1}\!\!+\!\mu_1(\boldsymbol X^{k} \!-\! \nabla\boldsymbol{w}^{k}),
\boldsymbol{\lambda}^{k\!-\!1}_1\!\!+\!\mu_2({\boldsymbol{\tilde{w}}}^{k} \!-\! \boldsymbol{w}^{k}),
\boldsymbol{\lambda}_2^{k\!-\!1}\!\!+\!\mu_3(\boldsymbol{z}^{k} \!-\! \mathscr{D}\boldsymbol{w}^{k})).
%
% \label{eqn:General_ADMM_ROF_Lambda}
% \boldsymbol\Lambda^{k+1}
% &= \boldsymbol\Lambda^k +\mu_1(\boldsymbol X^{k+1} - \nabla\boldsymbol{w}^{k+1}),\\
% %
% \label{eqn:General_ADMM_ROF_lambda1}
% \boldsymbol{\lambda}^{k+1}_1
% &= \boldsymbol{\lambda}_1^k +\mu_2({\boldsymbol{\tilde{w}}}^{k+1} - \boldsymbol{w}^{k+1}),\\
% %
% \label{eqn:General_ADMM_ROF_lambda2}
% \boldsymbol{\lambda}_2^{k+1}
% &= \boldsymbol{\lambda}_2^k +\mu_3(\boldsymbol{z}^{k+1} - \mathscr{D}\boldsymbol{w}^{k+1}). 
\end{align}
The solution $\boldsymbol w^k$ can readily be shown to satisfy the linear optimality condition
\begin{equation}\label{eq:linsystw}
(\mu_2 I +\mu_3\mathscr{D}^*\mathscr{D} -\mu_1\Delta)w^{k}_i = \lambda^{k-1}_{1,i} +\mu_2 \tilde{w}^{k-1}_i +\mathscr{D}^*\!\!\left(\lambda^{k-1}_{2,i}+\mu_3z^{k-1}_i\right)-\div(\Lambda^{k-1}_i +\mu_1 X^{k-1}_i)
\end{equation}
for $i=1,\ldots,N_{\text{mat}}$ (where we assume periodic boundary conditions for simplicity and $I$ represents the identity operator).
This linear system of equations is solved using the preconditioned generalized minimal residual method with preconditioner
$$P= \mathcal{F}^{-1}\left(\mu_2 \mathcal{F}(I)+\mu_3c\mathcal{F}\left(\left(-\Delta\right)^{-\frac{1}2}\right)-\mu_1 \mathcal{F}(\Delta)\right).$$
Above, $\mathcal F(A)$ denotes the representation of a linear operator $A$ in Fourier space.
Since $-\Delta$ is diagonal in Fourier space, $P$ is straightforward and highly efficient to invert.
The choice of the term $(-\Delta)^{-1/2}$ is motivated by $\mathscr{D}^*\mathscr{D}$ equalling that linear operator on an unbounded domain.
The components $X_i^k$ of $\boldsymbol X^k$ are readily computed for $i=1,\ldots,N_{\text{mat}}$ via
\begin{equation}\label{eqn:ADMM_ROF_XUpdatem}
X_i^{k}(x)
=\prox\hfil_{\frac{\alpha|\cdot|}{\mu_1}}\left(\nabla w_i^{k}(x)-\frac{\Lambda_i^{k-1}(x)}{\mu_1}\right)
\;\text{with }
\prox\hfil_{\frac{\alpha|\cdot|}{\mu_1}}\boldsymbol v
=\max\{0,|\boldsymbol v|-\tfrac{\alpha}{\mu_1}\}\tfrac{\boldsymbol v}{|\boldsymbol v|}.
\end{equation}
For given $\boldsymbol z^k$, the optimal value $y^k$ is due to \cref{thm:optimalY} given as $y^k(x)=y(\boldsymbol z^k(x))$ for
\begin{equation}\label{eqn:yOfZ}
y(\boldsymbol{z})
=\hat{\mathscr{I}}(\boldsymbol z)\exp\left(\tfrac f{\sigma^2}-W_0\left(\hat{\mathscr{F}}(\boldsymbol z)\exp(f/\sigma^2)\right)\right)
=\hat{\mathscr{I}}(\boldsymbol z)\tfrac{W_0\left(\hat{\mathscr{F}}(\boldsymbol z)\frac{\exp(f/\sigma^2)}{\sigma^2}\right)}{\hat{\mathscr{F}}(\boldsymbol z)/\sigma^2},
\end{equation}
where we abbreviated $\hat{\mathscr F}(\boldsymbol z)=\int_{E_{\min}}^{E_{\max}}\hat{\mathscr I}(\boldsymbol z)\,\mathrm{d}E$.
Having thus eliminated $y^k$, it still remains to minimize for $\boldsymbol z^k$.
Defining
$H^k(y,\boldsymbol z)=\mathscr{L}(y,\boldsymbol{w}^{k},\boldsymbol{\tilde{w}},\boldsymbol{X},\boldsymbol{z};\boldsymbol{\Lambda}^{k-1},\boldsymbol{\lambda}_1^{k-1},\boldsymbol{\lambda}^{k-1}_2)$,
the optimality condition for $\boldsymbol z^k$ becomes
\begin{multline*}
0
=\tfrac{\mathrm d}{\mathrm d\boldsymbol z}H^k(y(\boldsymbol z^k),\boldsymbol z^k)
=\partial_{y}H^k(y(\boldsymbol z^k),\boldsymbol z^k)\tfrac{\mathrm dy(\boldsymbol z^k)}{\mathrm d\boldsymbol z}+\partial_{\boldsymbol z}H^k(y(\boldsymbol z^k),\boldsymbol z^k)
=\partial_{\boldsymbol z}H^k(y(\boldsymbol z^k),\boldsymbol z^k)\\
=\boldsymbol\lambda_2+\mu_3(\boldsymbol z^k-\mathscr{D}\boldsymbol w^{k})
+\int_{E_{\min}}^{E_{\max}}\left(y(\boldsymbol{z}^k)-\hat{\mathscr I}(\boldsymbol{z}^k)\right)\boldsymbol g\,\mathrm{d}\mathcal{L}^1
=:h(\boldsymbol z),
\end{multline*}
which is solved for $\boldsymbol z^k$ pointwise on $\Sigma$ by a rapidly converging (damped) Newton iteration
\begin{equation}\label{Alg:Damped_Newtonzy}
\boldsymbol{z}^{k,\text{new}} = \boldsymbol{z}^{k,\text{old}}\!-\!\gamma^k(Dh(\boldsymbol{z}^{k,\text{old}}))^{-1}h(\boldsymbol{z}^{k,\text{old}})
\;\text{with }
\gamma^k  = \min\left(1,\tfrac{10}{\|Dh^{-1}(\boldsymbol{z}^{k,\text{old}})h(\boldsymbol{z}^{k,\text{old}})\|_2}\right).
\end{equation}
For the reader's convenience we provide the Newton operator
\begin{multline*}
\nabla h(\boldsymbol{z}) = \mu_3 - \left( \frac{W_0\left(\frac{\exp (f/\sigma^2)}{\sigma^2}\hat{\mathscr F}(\boldsymbol{z})\right)}{\hat{\mathscr F}(\boldsymbol{z})/\sigma^2}-1\right)\int_{E_{\min}}^{E_{\max}}\boldsymbol{g}\otimes\boldsymbol{g}\,\hat{\mathscr I}( \boldsymbol{z})\,\mathrm{d}\mathcal{L}^1 \\
+\frac{W^2_0\left(\frac{\exp(f/\sigma^2)}{\sigma^2}\hat{\mathscr F}\left(\boldsymbol{z}\right)\right)}{\hat{\mathscr F}\left(\boldsymbol{z}\right)^2\left(1+W_0\left(\frac{\exp (f/\sigma^2)}{\sigma^2}\hat{\mathscr F}(\boldsymbol{z})\right)\right)/\sigma^2} \int_{E_{\min}}^{E_{\max}}\boldsymbol{g}\,\hat{\mathscr I}(\boldsymbol{z})\,\mathrm{d}\mathcal{L}^1\otimes\int_{E_{\min}}^{E_{\max}}\boldsymbol{g}\,\hat{\mathscr I}(\boldsymbol{z})\,\mathrm{d}\mathcal{L}^1.
\end{multline*}
Finally, the optimization problem in $\boldsymbol{\tilde w}$ is nonconvex,
so instead of a full optimization we actually replace the nonconvex term $R_2(\boldsymbol{\tilde w})$ with its first order Taylor expansion at $\boldsymbol{\tilde w}^{k-1}$
(which corresponds to just doing an explicit gradient step in the nonconvex term) and thereby obtain
\begin{equation}\label{eq:subpronforw_tild_new1}
\boldsymbol{\tilde w}^{k}(\boldsymbol x)
=\argmin_{\boldsymbol{\tilde w}(\boldsymbol{x})} \frac{\mu_2}{2}\left|\boldsymbol{\tilde{w}}(\boldsymbol{x}) \!-\! \frac{\mu_2\boldsymbol w^{k}(\boldsymbol{x}) \!-\! \boldsymbol\lambda^{k\!-\!1}_1(\boldsymbol{x})\!+\!\beta(\boldsymbol{\tilde{w}}^{k\!-\!1}(\boldsymbol{x}) \!-\! \frac{\boldsymbol{1}_{N_{\text{mat}}}}{N_{\text{mat}}})}{\mu_2}\right|^2
\!+\!\iota_{\Delta}(\boldsymbol{\tilde{w}}(\boldsymbol x)),
\end{equation}
whose solution was detailed in \cref{sec:simplexProjection}.
The full scheme is summarized in \cref{Alg:FULL_ADMM}.

\notinclude{
Now we want to find the optimal solution of the problem (\ref{eqn:General_ADMM_ROF_yz}) .  Finding updates $(y^{n+1},\boldsymbol{z}^{n+1})$ of the sub problem (\ref{eqn:General_ADMM_ROF_yz}) we first find the optimal $y$ for some $\boldsymbol z$ given by

 %can proceed with two main strategies

	\begin{align}\label{eq:Updateofy}
	y(\boldsymbol{z})
	=\hat{\mathscr{I}}(\boldsymbol z)\exp\left(\tfrac f{\sigma^2}-W_0\left(\hat{\mathscr{F}}(\boldsymbol z)\exp\left(\tfrac{f}{\sigma^2}\right)\right)\right)
	=\hat{\mathscr{I}}(\boldsymbol z)\tfrac{W_0\left(\hat{\mathscr{F}}(\boldsymbol z)\frac{\exp\left(\tfrac{f}{\sigma^2}\right)}{\sigma^2}\right)}{1/\sigma^2\hat{\mathscr{F}}(\boldsymbol z)},
	\end{align}

%\begin{itemize}
%	\item[(1)] First we fixed $\boldsymbol{z}$ and update $y$ and then for fixed y we update $\boldsymbol{z}$. The update of $y$ is given as 
	
%	\begin{align}\label{eq:Updateofy}
%%	y_{n+1} = \mathscr{K}&\left(\boldsymbol{g}(E)\cdot\boldsymbol{z}_n\right)\exp\bigg(\lambda f -  W_0\bigg(\lambda\exp(\lambda f)\int_{E_{\min}}^{E_{\max}}\mathscr{K}\left(\boldsymbol{g}(E)\cdot\boldsymbol{z}_n\right)dE\bigg) \bigg)\\ \nonumber &\stackrel{\exp(-W_0(x))= \frac{W_0(x)}{x}}{=} \mathscr{K}\left(\boldsymbol{g}(E)\cdot\boldsymbol{z}_n\right) \frac{W_0\left(\lambda\exp(\lambda f)\int_{E_{\min}}^{E_{\max}}\mathscr{K}\left(\boldsymbol{g}(E)\cdot\boldsymbol{z}_n\right)dE\right)}{\lambda\int_{E_{\min}}^{E_{\max}}\mathscr{K}\left(\boldsymbol{g}(E)\cdot\boldsymbol{z}_n\right)dE} 
%%	\end{align}
%%	\begin{equation}\label{eqn:stepYw}
%	y^{n+1}
%	=\hat{\mathscr{I}}(\boldsymbol z^{n})\exp\left(\tfrac f{\sigma^2}-W_0\left(\hat{\mathscr{F}}(\boldsymbol z^{n})\exp\left(\tfrac{f}{\sigma^2}\right)\right)\right)
%	=\hat{\mathscr{I}}(\boldsymbol z^{n})\tfrac{W_0\left(\hat{\mathscr{F}}(\boldsymbol z^{n})\frac{\exp\left(\tfrac{f}{\sigma^2}\right)}{\sigma^2}\right)}{1/\sigma^2\hat{\mathscr{F}}(\boldsymbol z^{n})},
%	\end{align}
	
%	where $\hat{\mathscr{F}}(\boldsymbol{z}) = \int_{E_{\min}}^{E_{\max}}\mathcal{I}(\boldsymbol{z})\mathrm{d}\mathcal{L}^1$. 

For updating  $\boldsymbol{z}$ we can use two strategies. First we consider that $y$ is already computed for the previous $z$ i.e. $y^{n+1} = y(\boldsymbol{z}^n)$ and then the optimality condition of (\ref{eqn:General_ADMM_ROF_yz}) with respect to $z$ reads 
	\[
	(\lambda^n_2)_i +\mu_3(z_i - \mathscr{D}w^{n+1}_i) + \int_{E_{\min}}^{E_{\max}}g_i
	\left(y^{n+1}-\mathcal{I}\left(\boldsymbol{z}\right)\right)\mathrm{d}\mathcal{L}^1  = 0, \ \ i=1,\ldots,N_{\text{mat}} \]
	and we have
	\begin{equation}\label{eq:zupd1}
z_i =  \mathscr{D}w^{n+1}_i -   \frac{1}{\mu_3}\left( \int_{E_{\min}}^{E_{\max}}g_i
	\left(y^{n+1}-\mathcal{I}\left(\boldsymbol{z}\right)\right)\mathrm{d}\mathcal{L}^1 +(\lambda^n_2)_i \right). 
	\end{equation}
	
%	The equation (\ref{eq:zupd1}) can be solved with a fixed point iteration $\boldsymbol{z}^{n+1} =  G(\boldsymbol{z}^n)$ with  
%	
%	\begin{equation} \label{eq:Gzn}
%	G(\boldsymbol{z})  =  \mathscr{D}\boldsymbol{w}^{n+1} -   \frac{1}{\mu_3}\left( \int_{E_{\min}}^{E_{\max}}\boldsymbol{g}\left(y^{n+1}-\mathcal{I}\left(\boldsymbol{z}\right)\right)\mathrm{d}\mathcal{L}^1 +\boldsymbol{\lambda}^n_2 \right).
%	\end{equation} 
%	
%	Note that $\nabla G(\boldsymbol{z}) =  -\frac{1}{\mu_3} \int_{E_{\min}}^{E_{\max}} \boldsymbol{g}\otimes \boldsymbol{g} \mathcal{I}\left(\boldsymbol{z}\right) \mathrm{d}\mathcal{L}^1 $ will have norm less that one for $\mu_3$ large enough so that the iteration would converge. Alternatively we can do gradient descent method  with step-size $\tau$  given below 
%	\[
%	z^{n+1}_i = z^n_i (1-\mu_3\tau) -\tau(\lambda^n_2)_i +  \tau\mu_3\mathscr{D}w^{n+1}_i -   \tau \int_{E_{\min}}^{E_{\max}}g_i\left(y^{n+1}-\mathcal{I}\left(\boldsymbol{z}^n\right)\right)\mathrm{d}\mathcal{L}^1
%	\]
%	 and is identical with the fixed point iteration scheme for $\tau = 1/\mu_3$. To stabilize the fixed point iteration, we define 
%	
%	\[
%	B  = \max_{i,j} \bigg(\nabla G(\boldsymbol{z}^n)\bigg)_{i,j}, \ \ i,j=1,\ldots, N_{\text{mat}}
%	\]
%	
%	and  we have the following update for $\boldsymbol{z}$
%	\begin{equation}\label{eq:zupdStabfixedpoint}
%	z^{n+1}_i =  \frac{\mathscr{D}w^{n+1}_i -   \frac{1}{\mu_3}\left( \int_{E_{\min}}^{E_{\max}}g_i\left(y^{n+1}-\mathcal{I}\left(\boldsymbol{z}^n\right)\right)\mathrm{d}\mathcal{L}^1 +(\lambda^n_2)_i \right)+Bz^n_i}{1+B}.
%	\end{equation}
%	Finally  one 
Then  we set $R(\boldsymbol{z}) = \boldsymbol{z} - G(\boldsymbol{z}) =\boldsymbol{0} $ with 
\begin{equation} \label{eq:Gzn}
	G(\boldsymbol{z})  =  \mathscr{D}\boldsymbol{w}^{n+1} -   \frac{1}{\mu_3}\left( \int_{E_{\min}}^{E_{\max}}\boldsymbol{g}\left(y^{n+1}-\mathcal{I}\left(\boldsymbol{z}\right)\right)\mathrm{d}\mathcal{L}^1 +\boldsymbol{\lambda}^n_2 \right).
	\end{equation} 
and we can do Newton's iteration
	\begin{equation}\label{updzNewton}
		\boldsymbol{z}^{n+1} = \boldsymbol{z}^n- (\nabla R(\boldsymbol{z}^n))^{-1}(R(\boldsymbol{z}^n)),
	\end{equation}
	where
	\[\nabla R(\boldsymbol{z}) = \mu_3 + \int_{E_{\min}}^{E_{\max}} \boldsymbol{g}\otimes \boldsymbol{g} \mathcal{I}\left(\boldsymbol{z}\right) \mathrm{d}\mathcal{L}^1.\]
To ensure converges of the above Newton's we use a damped version off it we dampening parameter 
\[
\gamma^n  = \min\left(1,\frac{10}{\|\nabla R^{-1}(\boldsymbol{z}^n)R(\boldsymbol{z}^n)\|_2}\right).
\]	
The following pseudocode summarizes the alternating scheme for updating  $y$ and $\boldsymbol{z}$.
\begin{algorithm}[h!]
	\caption{Alternating scheme for the sub-problem (\ref{eqn:General_ADMM_ROF_yz}) }
	\label{Alg:Damped_AlternatingNewtonzy}
	\begin{algorithmic}
		%\Function{functionName} 
		\State $k=0$, $\boldsymbol{v}^0 = \boldsymbol{z}^{n}$
		\State $y^{n+1} = y(\boldsymbol{v}^{k})$
		\Repeat
		\State  $\boldsymbol{v}^{k+1} =\boldsymbol{v}^k - \gamma^k(\nabla R(\boldsymbol{v}^k))^{-1}(R(\boldsymbol{v}^k))$
		\State $k = k+1$
		%\EndFunction
		\Until $\|\boldsymbol R(v^{k})\|\leq tol $ \textbf{and} $k\leq MaxIter$
		\State $\boldsymbol{z}^{n+1} = \boldsymbol{v}^k$
		\State \Return $\boldsymbol{z}^{n+1}$, $y^{n+1}$ 
	\end{algorithmic}
\end{algorithm}
 
%	\item[(2)]

Alternatively we can consider $y = y(\boldsymbol{z})$ we write  
	
	\[
	H(y(\boldsymbol{z}),\boldsymbol{z}) = \mathscr{L}(y(\boldsymbol{z}),\boldsymbol{w}^{n+1},\boldsymbol{\tilde{w}}^{n+1},\boldsymbol {X}^{n+1},\boldsymbol{z};\boldsymbol{\Lambda}^n,\boldsymbol{\lambda}_1^n,\boldsymbol{\lambda}^n_2).
	\]

	The optimality condition of $H$ is given as follows
	
	\[
	0 = \partial_{\boldsymbol{z}} H(y(\boldsymbol{z}),\boldsymbol{z}) = \partial_{y}H(y(\boldsymbol{z}),\boldsymbol{z})\frac{\partial y(\boldsymbol{z})}{\partial\boldsymbol{z}} + \partial_{\boldsymbol{z}}H(y(\boldsymbol{z}),\boldsymbol{z}) =\partial_{\boldsymbol{z}}H(y(\boldsymbol{z}),\boldsymbol{z}), 
	\]
	where above we used that $y$ is optimal and so $\partial_y H = 0$.	 So we have 
	
	\[
	\partial_{\boldsymbol{z}}H(y(\boldsymbol{z}),\boldsymbol{z})= \mathscr{D}w^{n+1}_i -   \frac{1}{\mu_3}\left( \int_{E_{\min}}^{E_{\max}}g_i
	\left(y(\boldsymbol{z})-\hat{\mathscr I}\left(\boldsymbol{z}\right)\right)\mathrm{d}\mathcal{L}^1 +(\lambda^n_2)_i \right) =0
	\]
	Substituting the optimal $y$ we have
	\begin{align*}
	& (\lambda^n_2)_i +\mu_3(z_i - \mathscr{D}w^{n+1}_i)+  \int_{E_{\min}}^{E_{\max}}g_i\left( \frac{W_0\left(\frac{\exp( \frac{f}{\sigma^2})}{\sigma^2}\hat{\mathscr F}\left(\boldsymbol{z}\right)\right)}{1/\sigma^2\hat{\mathscr F}\left(\boldsymbol{z}\right)}-1\right)\hat{\mathscr I}\left(\boldsymbol{z}\right)\mathrm{d}\mathcal{L}^1= 0.
	\end{align*}
	The above equation can be solved  %by doing fixed point iteration or 
	Newton iteration  in $\boldsymbol{z}$.	
%	\textbf{Fixed point iteration}
%	\begin{equation*}
%	\begin{cases}
%	\boldsymbol{z}^{n+1} = h(\boldsymbol{z}^n) \\
%	y^{n+1} = y(\boldsymbol{z}^{n+1}) 
%	\end{cases}
%	\end{equation*}
Setting
	\begin{align}\label{eq:hz}
	h(\boldsymbol{z}) = &\mathscr{D}\boldsymbol{w}^{n+1}- \frac{1}{\mu_3}\bigg(\boldsymbol{\lambda}^n_2 +\int_{E_{\min}}^{E_{\max}}\boldsymbol{g}
	\left( \frac{W_0\left(\frac{\exp( \frac{f}{\sigma^2})}{\sigma^2}\hat{\mathscr F}\left(\boldsymbol{z}\right)\right)}{1/\sigma^2\hat{\mathscr F}\left(\boldsymbol{z}\right)}-1\right)\hat{\mathscr I}\left(\boldsymbol{z}\right)\mathrm{d}\mathcal{L}^1\bigg),
	\end{align}
%	\textbf{Newton's method} 
and $\tilde{R}(\boldsymbol{z}) = \boldsymbol{z} - h(\boldsymbol{z}) = \boldsymbol{0} $ we can perform a Newton's method
	\begin{equation*}
	\boldsymbol{z}^{n+1} =\boldsymbol{z}^n - (\nabla \tilde{R}(\boldsymbol{z}^n))^{-1}(\tilde{R}(\boldsymbol{z}^n))  
	\end{equation*}
	
	with 
	\begin{align*}
	\nabla \tilde{R}(\boldsymbol{z}) = \mu_3 - &\left( \frac{W_0\left(\frac{\exp \left(\frac{f}{\sigma^2}\right)}{\sigma^2}\hat{\mathscr F}\left(\boldsymbol{z}\right)\right)}{1/\sigma^2\hat{\mathscr F}\left(\boldsymbol{z}\right)}-1\right)\int_{E_{\min}}^{E_{\max}}\boldsymbol{g}\otimes\boldsymbol{g}\hat{\mathscr I}( \boldsymbol{z})\mathrm{d}\mathcal{L}^1 \\&+\frac{W^2_0\left(\frac{\exp \left(\frac{f}{\sigma^2}\right)}{\sigma^2}\hat{\mathscr F}\left(\boldsymbol{z}\right)\right)}{1/\sigma^2\hat{\mathscr F}\left(\boldsymbol{z}\right)^2\left(1+W_0\left(\frac{\exp \left(\frac{f}{\sigma^2}\right)}{\sigma^2}\hat{\mathscr F}\left(\boldsymbol{z}\right)\right)\right)} \int_{E_{\min}}^{E_{\max}}\boldsymbol{g}\hat{\mathscr I}(\boldsymbol{z})\mathrm{d}\mathcal{L}^1\otimes\int_{E_{\min}}^{E_{\max}}\boldsymbol{g}\hat{\mathscr I}(\boldsymbol{z})\mathrm{d}\mathcal{L}^1.
	\end{align*}
Again to ensure converges of the above Newton's we use a damped version off it we dampening parameter 
\[
   \gamma^n  = \min\left(1,\frac{10}{\|\nabla \tilde{R}^{-1}(\boldsymbol{z}^n)\tilde{R}(\boldsymbol{z}^n)\|_2}\right).
\]	
Then we update $y$ via $y^{n+1} = y(\boldsymbol{z}^{n+1})$. The following pseudocode summarizes the Newton's method for updating $\boldsymbol{z}$ and the update of $y$.
\begin{algorithm}[h!]
	\caption{Damped Newton method for sub-problem (\ref{eqn:General_ADMM_ROF_yz}) }
	\label{Alg:Damped_Newtonzy}
	\begin{algorithmic}
		%\Function{functionName} 
		\State $k=0$, $\boldsymbol{v}^0 = \boldsymbol{z}^{n}$
		\Repeat
		\State  $\boldsymbol{v}^{k+1} =\boldsymbol{v}^k - \gamma^k(\nabla \tilde{R}(\boldsymbol{v}^k))^{-1}(\tilde{R}(\boldsymbol{v}^k))$
		\State $k = k+1$
		%\EndFunction
		\Until $\|\boldsymbol R(v^{k})\|\leq tol $ \textbf{and} $k\leq MaxIter$
		\State $\boldsymbol{z}^{n+1} = \boldsymbol{v}^k$
		\State $y^{n+1} = y(\boldsymbol{z}^{n+1})$
		\State \Return $\boldsymbol{z}^{n+1}$, $y^{n+1}$ 
	\end{algorithmic}
\end{algorithm}

}%\notinclude

\begin{algorithm}
	\caption{ADMM for poly-energetic reconstruction }
	\label{Alg:FULL_ADMM}
	\begin{algorithmic}
		%\Function{functionName} 
% 		\State $k=0$
		\Repeat
		\State calculate $w_i^{k}$ via solving the linear system \eqref{eq:linsystw} with preconditioned GMRES
		\State calculate $X_i^{k}$ via \eqref{eqn:ADMM_ROF_XUpdatem}
		\State calculate $\boldsymbol z^{k}$ via the damped Newton method \eqref{Alg:Damped_Newtonzy} 
		\State calculate $y^{k}=y(\boldsymbol z^{k})$ by \eqref{eqn:yOfZ}
		\State calculate $\boldsymbol{\tilde{w}}^{k}$ via solving \eqref{eq:subpronforw_tild_new1} according to \cref{sec:simplexProjection}
		\State calculate the dual variables $\boldsymbol\Lambda^{k}$, $\boldsymbol\lambda_1^{k}$ and $\boldsymbol\lambda_2^{k}$ via  \eqref{eqn:General_ADMM_ROF_Lambda}
		\State $k\leftarrow k+1$
		\Until $\|\boldsymbol {\tilde{w}}^{k}-\boldsymbol {\tilde{w}}^{k-1}\|_{L^2}/\|\boldsymbol {\tilde{w}}^{k-1}\|_{L^2}<\text{tolerance}$
% 		\State \Return $\boldsymbol u$
		%\EndFunction
	\end{algorithmic}
\end{algorithm}

\notinclude{
\subsection{Gaussian Denoising Reconstruction}
In the case when the signal is strong i.e. there are a high counts on can consider that the noise on the measurements follows a Gaussian distribution since in this case the Poisson noise behaves like Gaussian noise []. 
Let now $f$ be the measurements corrupted only by Gaussian noise and let  $\alpha,\beta$ are positive parameters then reconstruction problem given by the minimization of the functional $\boldsymbol{w}\mapsto H:L^1(\Omega)^{N_{\text{mat}}}\cap L^{2}(\Omega)^{N_{\text{mat}}}\rightarrow \mathbb{R}\cup \{\pm\infty\}$  defined below

\begin{equation}\label{def: functional H}
H(\boldsymbol{w}):=
\frac{1}{2\sigma^2}\int_{\Sigma}\left(f-\mathscr{F}(\boldsymbol{w})\right)^2d\mathcal{H}^n +R^{\alpha,\beta}(\boldsymbol{w}) 
\end{equation}

So we seek to solve the variational problem

\begin{equation}\label{eq:minHfun}
\boldsymbol{w} \in \argmin_{\boldsymbol{w}} H(\boldsymbol{w})
\end{equation}

For solving the (\ref{eq:minHfun}) we will derive a similar FBS type algorithm for iteratively finding the minimizer. We start by computing the Gateaux derivative of the data fidelity term in the direction $\boldsymbol{\psi}=(\psi_1,\psi_2,\ldots,\psi_{N_{\text{mat}}})$,

\[
\nabla_{\boldsymbol{w}}\left(\frac{1}{2}\int_{\Sigma}\left(f-\mathscr{F}(\boldsymbol{w})\right)^2d\mathcal{H}^n\right)(\boldsymbol{\psi}) = \sum_{i=1}^{N_{\text{mat}}} \int_{\Sigma}\left(f-\mathscr{F}(\boldsymbol{w})\right)\int_{E_{\min}}^{E_{\max}}\mathscr{I}(\boldsymbol{w})g_i d\mathcal{L}^1 \mathscr{D}\psi_i d\mathcal{H}^n. 
\]

 Then, the optimality condition is reads as

\begin{align}
\label{eq: KKT for H}
0\in \mathscr{D}^*\left(f\int_{E_{\min}}^{E_{\max}}\mathscr{I}(\boldsymbol{w})g_id\mathcal{L}^1\right) -&\mathscr{D}^* \left(\mathscr{F}(\boldsymbol{w})\int_{E_{\min}}^{E_{\max}}\mathscr{I}(\boldsymbol{w})g_id\mathcal{L}^1\right) \nonumber \\& +\alpha\partial R_1(\boldsymbol{w})_i -\beta\left(w_i-1/N_{\text{mat}}\right), \ \ \text{for} \ \ i=1,\ldots N_{\text{mat}}
\end{align}

Computing now the second Gateaux derivative of the data fidelity term in the direction $\boldsymbol{\psi}$ 

\begin{align*}
\nabla^2_{\boldsymbol{w}}\left(\frac{1}{2}\int_{\Sigma}\left(f-\mathscr{F}(\boldsymbol{w})\right)^2d\mathcal{H}^n\right)(\boldsymbol{\psi}) =  &\sum_{i,k=1}^{N_{\text{mat}}}\int_{\Sigma}\mathscr{F}(\boldsymbol{w})\int_{E_{\min}}^{E_{\max}}\mathscr{I}(\boldsymbol{w})g_k d\mathcal{L}^1\int_{E_{\min}}^{E_{\max}}\mathscr{I}(\boldsymbol{w})g_i d\mathcal{L}^1 \mathscr{D}\psi_k\mathscr{D}\psi_i d\mathcal{H}^n \\& +\sum_{i,k=1}^{N_{\text{mat}}} \int_{\Sigma}\left(\mathscr{F}(\boldsymbol{w})-f\right)\int_{E_{\min}}^{E_{\max}}\mathscr{I}(\boldsymbol{w})g_kg_i d\mathcal{L}^1 \mathscr{D}\psi_k\mathscr{D}\psi_i d\mathcal{H}^n
\end{align*}
 
we  can see that in order the data fidelity term to be convex the minimum condition that is required is that $\mathscr{F}(\boldsymbol{w})-f\geq 0$ a.e. on $\Sigma$. 
Moreover it is easy to see that the functional $H$ is a proper and lower semi continuous functional and thus admits a minimizer. In particular, the minimizer is unique when $\beta =0$ and when we assume that $\mathscr{F}(\boldsymbol{w})-f\geq 0$ a.e. on $\Sigma$.

To obtain an FBS iterative scheme since our functional is not smooth and non-convex we follow \cite{ochs2014ipiano} and we multiply by $h$  and then add and subtract $w_i$ 

\begin{align}
\label{eq:opt cond w -gaussian noise}
0\in w_i-(1+h\beta)w_i-h/N_{\text{mat}} +h \mathscr{D}^*\bigg((f-\mathscr{F}(\boldsymbol{w}))&\int_{E_{\min}}^{E_{\max}}\mathscr{I}(\boldsymbol{w})g_id\mathcal{L}^1\bigg) \nonumber\\&+h\alpha \partial R_1(\boldsymbol{w})_i , \ \ i=1,\ldots N_{\text{mat}},
\end{align}

where $h$ can be chosen various ways for instance can be fixed and less than $\eta/(c_2+L)$ where $L$ is the Lipschitz constant of the gradient of the data-fidelity, $\eta>1$ constant and $c_2=2e-6$. Then we set

\begin{equation}\label{eq:GDstepL2case}
\tilde{w}_{i,GD}(\boldsymbol{w}) =(1+h\beta)w_i +h/N_{\text{mat}} - h\mathscr{D}^*\bigg((f-\mathscr{F}(\boldsymbol{w}))\int_{E_{\min}}^{E_{\max}}\mathscr{I}(\boldsymbol{w})g_id\mathcal{L}^1\bigg)
\end{equation}
Now we solve the above optimality conditions by using a semi-implicit scheme given below

\begin{align}
\label{eq:opt cond w -gaussian noise_it_scheme}
0\in w^{k}_i-\tilde{w}_{i,GD}(\boldsymbol{w}^{k-1}) \nonumber+h\alpha \partial  R_1(\boldsymbol{w}^k)_i , \ \ i=1,\ldots N_{\text{mat}}
\end{align}

The above equation it is simply the optimality of the following variational problem

\begin{equation} \label{eq:L2varprob}
\boldsymbol{w}^k \in \argmin_{\boldsymbol{w}} \frac{1}{2} \int_{\Omega} \sum_{i=1}^{N_{\text{mat}}}(w_i - \tilde{w}_{i,GD}(\boldsymbol{w}^{k-1}))^2d\mathcal{L}^n +h\alpha R_1(\boldsymbol{w}), 
\end{equation}
and can be solved numerically using a similar ADMM scheme described in \cref{subsec:ADMMWROF}.

%Instead of keeping $h$ fixed we deploy a backtracking strategy as in \cite{ochs2014ipiano} to  estimate automatically the Lipschitz constant and we set each iteration $h_{k} = 2/(2c_2+L_k) $ with  $c_2=10^{-6}$.
  Summarizing, our algorithm is given by the following two-step iterative scheme

\makeatletter
\def\plist@algorithm{Algorithm\space}
\makeatother
\begin{algorithm}[h]
	\caption{Iterative algorithm for polyenergetic CT reconstruction}
	\label{alg:modifiedEM2}
	\begin{algorithmic}
		%\Function{functionName} 
		\For{$k=1,\ldots$}
		\State calculate $w_{i,GD}(\boldsymbol w^{k-1})$ via \eqref{eq:GDstepL2case} for $i=1,\ldots,N_{\text{mat}}$
		\State calculate $\boldsymbol w^k$ via \eqref{eq:L2varprob}
		\EndFor
		%\EndFunction
	\end{algorithmic}
\end{algorithm}
}%\notinclude

\newpage
\bibliographystyle{siam}
\bibliography{paper_Georgios_Papanikos_Wirth_Benedikt}

\begin{thebibliography}{10}

\bibitem{boerckel2014mct}
{\sc J.~D. Boerckel, D.~E. Mason, A.~M. McDermott, and E.~Alsberg}, {\em
  Microcomputed tomography: approaches and applications in bioengineering},
  Stem cell research \& therapy, 5 (2014), pp.~1--12.

\bibitem{borwein2016}
{\sc J.~M. Borwein and S.~B. Lindstrom}, {\em Meetings with lambert {W} and
  other special functions in optimization and analysis}, Pure and Applied
  Functional Analysis, 1 (2016), pp.~361--396.

\bibitem{brune2010}
{\sc C.~{B}rune}, {\em 4D Imaging in {T}omography and {O}ptical nanoscopy}, PhD
  thesis, Westfalishe Wilhelms-universitat Munster, 2010.

\bibitem{calatroni2017}
{\sc L.~Calatroni, J.~C. De~Los~Reyes, and C.-B. Sch{\"o}nlieb}, {\em Infimal
  convolution of data discrepancies for mixed noise removal}, SIAM Journal on
  Imaging Sciences, 10 (2017), pp.~1196--1233.

\bibitem{Chambolle2011}
{\sc A.~{C}hambolle and T.~{P}ock}, {\em A first-order primal-dual algorithm
  for convex problems with applications to imaging}, {J} of {M}ath {I}maging
  and {V}is, 40 (2011), pp.~120--145.

\bibitem{chen2011projection}
{\sc Y.~{C}hen and X.~{Y}e}, {\em {P}rojection onto a simplex}, arXiv preprint
  arXiv:1101.6081,  (2011).

\bibitem{ding2018}
{\sc Q.~Ding, Y.~Long, X.~Zhang, and J.~A. Fessler}, {\em Statistical image
  reconstruction using mixed poisson-gaussian noise model for x-ray ct}, arXiv
  preprint arXiv:1801.09533,  (2018).

\bibitem{Gao_2011}
{\sc H.~Gao, H.~Yu, S.~Osher, and G.~Wang}, {\em Multi-energy {CT} based on a
  prior rank, intensity and sparsity model ({PRISM})}, Inverse Problems, 27
  (2011), p.~115012.

\bibitem{golstein2009}
{\sc T.~Goldstein and S.~Osher}, {\em The split bregman method for
  l1-regularized problems}, SIAM journal on imaging sciences, 2 (2009),
  pp.~323--343.

\bibitem{HTom18}
{\sc T.~Hohweiller, N.~Ducros, F.~c. Peyrin, and B.~Sixou}, {\em An admm
  algorithm for constrained material decomposition in spectral {CT}}, in 2018
  26th European Signal Processing Conference (EUSIPCO), 2018, pp.~71--75.

\bibitem{Hu_2019}
{\sc Y.~Hu, J.~G. Nagy, J.~Zhang, and M.~S. Andersen}, {\em Nonlinear
  optimization for mixed attenuation polyenergetic image reconstruction},
  Inverse Problems, 35 (2019), p.~064004.

\bibitem{JZang18}
{\sc J.~G.~N. J.~Zhang, Y.~Hu}, {\em A scaled gradient method for digital
  tomographic image reconstruction}, Inverse Problems \& Imaging, 12 (2018),
  pp.~239--259.

\bibitem{Landi_2017}
{\sc G.~Landi, E.~L. Piccolomini, and J.~G. Nagy}, {\em A limited memory {BFGS}
  method for a nonlinear inverse problem in digital breast tomosynthesis},
  Inverse Problems, 33 (2017), p.~095005.

\bibitem{lanza2014}
{\sc A.~Lanza, S.~Morigi, F.~Sgallari, and Y.-W. Wen}, {\em Image restoration
  with poisson--gaussian mixed noise}, Computer Methods in Biomechanics and
  Biomedical Engineering: Imaging \& Visualization, 2 (2014), pp.~12--24.

\bibitem{Liu_2016}
{\sc J.~Liu and H.~Gao}, {\em Material reconstruction for spectral computed
  tomography with detector response function}, Inverse Problems, 32 (2016),
  p.~114001.

\bibitem{markoe_2006}
{\sc A.~Markoe}, {\em Analytic Tomography}, no.~v. 13 in Analytic tomography,
  Cambridge University Press, 2006.

\bibitem{Modgil_2015}
{\sc D.~Modgil, D.~S. Rigie, Y.~Wang, X.~Xiao, P.~A. Vargas, and P.~J.
  La~Rivi{\`e}re}, {\em Material identification in x-ray microscopy and micro
  ct using multi-layer, multi-color scintillation detectors}, Physics in
  Medicine \& Biology, 60 (2015), p.~8025.

\bibitem{nuyts2013}
{\sc J.~Nuyts, B.~De~Man, J.~A. Fessler, W.~Zbijewski, and F.~J. Beekman}, {\em
  Modelling the physics in the iterative reconstruction for transmission
  computed tomography}, Physics in Medicine \& Biology, 58 (2013), p.~R63.

\bibitem{Gpap2020}
{\sc G.~Papanikos}, {\em Variational image reconstruction methods for
  tomography - {PET} and transmission {CT} with mixed Poisson-Gaussian noise},
  PhD thesis, University of Nottingham, December 2020.

\bibitem{pereli2021}
{\sc A.~Perelli and S.~A. Martin}, {\em Regularization by denoising sub-sampled
  newton method for spectral ct multi-material decomposition}, {P}hil. {T}rans.
  {R}. Soc. A.3792020019120200191, 379 (2021).

\bibitem{champolleprec2011}
{\sc T.~{P}ock and A.~{C}hambolle}, {\em {D}iagonal preconditioning for first
  order primal-dual algorithms in convex optimization}, in 2011 {I}nternational
  {C}onference on {C}omputer {V}ision, 2011, pp.~1762--1769.

\bibitem{rudin1992ROF}
{\sc L.~I. {R}udin, S.~{O}sher, and E.~{F}atemi}, {\em Nonlinear total
  variation based noise removal algorithms}, Physica D: Nonlinear Phenomena, 60
  (1992), pp.~259--268.

\bibitem{Ruth1997}
{\sc C.~{R}uth and P.~M. {J}oseph}, {\em {E}stimation of a photon energy
  spectrum for a computed tomography scanner}, {M}edical {P}hysics, 24 (1997),
  pp.~695--702.

\bibitem{sawatzky2011}
{\sc A.~{S}awatzky}, {\em ({N}onlocal) {T}otal Variation in {M}edical
  {I}maging}, PhD thesis, Westfalische Wilhelms-Universitat Munster, 2011.

\bibitem{schirra2014spectral}
{\sc C.~O. Schirra, B.~Brendel, M.~A. Anastasio, and E.~Roessl}, {\em Spectral
  ct: a technology primer for contrast agent development}, Contrast media \&
  molecular imaging, 9 (2014), pp.~62--70.

\bibitem{TaWu09}
{\sc X.-C. Tai and C.~Wu}, {\em Augmented lagrangian method, dual methods and
  split bregman iteration for rof model}, in Scale Space and Variational
  Methods in Computer Vision, X.-C. Tai, K.~M{\o}rken, M.~Lysaker, and K.-A.
  Lie, eds., Berlin, Heidelberg, 2009, Springer Berlin Heidelberg,
  pp.~502--513.

\bibitem{Yao_2019}
{\sc Y.~Yao, L.~Li, and Z.~Chen}, {\em Dynamic-dual-energy spectral {CT} for
  improving multi-material decomposition in image-domain}, Physics in Medicine
  {\&} Biology, 64 (2019), p.~135006.

\end{thebibliography}

\end{document}